\def\E{{\mathbb E}}
\def\P{{\mathbb P}}
\def\R{{\mathbb R}}
\def\N{{\mathbb N}}
\def\ge {{\varepsilon}}
\newtheorem{theo}{Theorem}
\newtheorem{prop}{\indent Proposition}
\newtheorem{lem}{\indent Lemma}
\newtheorem{rem}{\indent Remark}
\newtheorem{ass}{Assumption}
\newtheorem{cor}{\indent Corollary}
\title[Interacting neurons : estimation of the spiking rate]{Non-parametric estimation of the spiking rate in systems of interacting neurons.}
\date{September 22, 2016}
\author{P.~Hodara, N. Krell, E. L\"ocherbach }
\address{P. Hodara:  CNRS UMR 8088, D\'epartement de Math\'ematiques, Universit\'e de Cergy-Pontoise,
2 avenue Adolphe Chauvin, 95302 Cergy-Pontoise Cedex, France.}
\email{pierre.hodara@sfr.fr}
\address{N. Krell: Universit\'e de Rennes 1,
 Institut de Recherche math\'ematique de Rennes,
CNRS-UMR 6625, Campus de Beaulieu.
 B\^atiment 22, 35042 Rennes Cedex, France.}
\email{nathalie.krell@univ-rennes1.fr}
\address{E. L\"ocherbach: CNRS UMR 8088, D\'epartement de Math\'ematiques, Universit\'e de Cergy-Pontoise,
2 avenue Adolphe Chauvin, 95302 Cergy-Pontoise Cedex, France.}
\email{eva.loecherbach@u-cergy.fr}
\subjclass[2010]{62G05; 60J75; 62M05}
\keywords{Piecewise deterministic Markov processes. Kernel estimation. Nonparametric estimation. Biological neural nets.}
\begin{document}

\maketitle

\begin{abstract}

We consider a model of interacting neurons where the membrane potentials of the neurons are described by a multidimensional piecewise deterministic Markov process (PDMP) with values in $\R^N, $ where $ N$ is the number of neurons in the network. A deterministic drift attracts each neuron's membrane potential to an equilibrium potential $m.$ When a neuron jumps, its membrane potential is reset to a resting potential, here $0,$ while the other neurons receive an additional amount of potential $\frac{1}{N}.$ We are interested in the estimation of the jump (or spiking) rate of a single neuron based on an observation of the membrane potentials of the $N$ neurons up to time $t.$ We study a Nadaraya-Watson type kernel estimator for the jump rate and establish its rate of convergence in $L^2 .$ This rate of convergence is shown to be optimal for a given H\"older class of jump rate functions. We also obtain a central limit theorem for the error of estimation. The main probabilistic tools are the uniform ergodicity of the process and a fine study of the invariant measure of a single neuron. 
\end{abstract}

 \section{Introduction}
This paper is devoted to the statistical study of certain Piecewise Deterministic Markov Processes (PDMP) modeling the activity of a biological neural network. More precisely, we are interested in estimating the the underlying jump rate of the process, \textit{i.e.}\ the spiking rate function of each single neuron. 

Piecewise Deterministic Markov Processes (PDMP's) have been introduced by Davis (\cite{Davis84} and \cite{Davis93}) as a family of c\`adl\`ag Markov processes following a deterministic drift with random jumps. PDMP's are widely used in probabilistic modeling of \textit{e.g.}\ biological or chemical phenomena (see \textit{e.g.} \cite{CDMR} or \cite{PTW-10}, see \cite{ABGKZ} for an overview). In the present paper, we study the particular case of PDMP's which are systems of interacting neurons. Building a model for the activity of a neural network that can fit biological considerations is crucial in order to understand the mechanics of the brain. Many papers in the literature use Hawkes Processes in order to describe the spatio-temporal dependencies which are typical for huge systems of interacting neurons, see \cite{ae}, \cite{hrbr} and \cite{HL} for example. Our model can be interpreted as Hawkes process with memory of variable length (see \cite{aenew});  it is close to the model presented in \cite{bresiliens}. It is of crucial interest for modern neuro-mathematics to be able to statistically identify the basic parameters defining the dynamics of a model for neural networks. The most relevant mechanisms to study are the way the neurons are connected to each other and the way that a neuron deals with the information it receives. In \cite{dglo} and in \cite{hrbr}, the authors build an estimator for the interaction graph, in discrete or in continuous time. In the present work, we assume that we observe a subsystem of neurons which are all interconnected and behaving in a similar way. We then focus on the estimation of the firing rate of a neuron within this system. This rate depends on the membrane potential of the neuron, influenced by the activity of the other neurons. 

More precisely, we consider a process $X_t=(X_t^1,...,X_t^N),$ where $N$ is the number of neurons in the network and where each variable $X_t^i$ represents the membrane potential of neuron $ i, $ for $ 1 \le i \le N.$ Each membrane potential $X_t^i $ takes values in a compact interval $ [0, K], $ where $0$ is interpreted as resting potential (corresponding to $ \sim - 90 mV$ in real neurons) and where $ K \sim 140 m V $ (see \textit{e.g.} \cite{I-09}). This process has the following dynamic. A deterministic drift attracts the membrane potential of each neuron to an equilibrium potential $m \in \R_+$ with an exponential speed of parameter $\lambda \in \R_+ .$ Moreover, a neuron with membrane potential $x$ ``fires" (\textit{i.e.}, jumps) with intensity $f(x),$ where $f:\R_+ \to \R_+ $ is a given intensity function. When a neuron fires, its membrane potential is reset to $0,$ interpreted as resting potential, while the membrane potentials of the other neurons are increased by $\frac{1}{N}$ until they reach the maximal potential height $K.$

The goal of this paper is to explore the statistical complexity of the model described above in a non-parametric setting. We aim at giving precise statistical characteristics (such as optimal rates of
convergence, estimation procedures) such that we are able to compare systems of interacting neurons
to benchmark non-parametric models like density estimation or nonlinear
regression. More precisely, given the continuous observation \footnote{A short remark concerning the continuous time observation scheme : Presumably, if we deal with discrete time samples, observed at sufficiently high frequency such that with huge probability at most one jump can take place during one sampling step, it would be possible to reconstruct the continuous trajectory of the process with hight probability and to perform our estimation procedure also in this frame. } of the system of interacting neurons over a time interval $[0,t]$ (with asymptotics being taken as $t \rightarrow \infty$), we infer on the different parameters of the model
which are: the equilibrium potential $m,  $ the speed of attraction $\lambda $ and the spiking rate function $f$. Since in a continuous time setting, the coefficients $ \lambda $ and  $m$ are known (they can be identified by any observation of the continuous trajectory of a neuron's potential between two successive jumps), the {\it typical} problem is the
estimation of the unknown spiking rate $f (\cdot).$ 

Therefore we restrict our attention to the estimation of the unknown spiking rate $f (\cdot) .$ We measure smoothness of the spiking rate by considering H\"older classes of possible shapes for the spiking rate and suppose that the spiking rate has smoothness of order $\beta $ in a H\"older sense. To estimate the jump rate $f$ in a position $a,$ we propose a Nadaraya-Watson type kernel estimator which is roughly speaking of the form
$$
\hat f_t(a)=\frac{\sharp \mbox{ spikes in positions in $ B_h(a)$ during $[0, t ] $} }{\mbox{ occupation time of $ B_h(a)$ during $[0, t ]$ } },
$$
where $B_h(a)$ is a neighborhood of size $h$ of the position $a$ where we estimate the jump rate function $f.$ A rigorous definition of the estimator is given in terms of the jump measure and an occupation time measure of the process $X.$ The convergence of the estimator is implied by the fact that the compensator of the jump measure is the occupation time measure integrated against the jump rate function $f,$ together with uniform ergodicity of the process. Assuming that the jump rate function $f$ has smoothness of order $\beta $ in a H\"older sense, we obtain the classical rate of convergence of order $t^{-\frac{\beta}{2 \beta +1}}$ for the point-wise $L^2 -$error of the estimator. This rate is shown to be optimal. We also state two important probabilistic tools that are needed in order to obtain the statistical results. The first one is the uniform positive Harris recurrence of process. The second one is the existence of a regular density function of the invariant measure of a single neuron. 

In the literature, non-parametric estimation for PDMP's has already been studied,  see for example \cite{ADG-P} and, more particularly concerning the estimation of the jump rate, \cite{AM-G}. On the contrary to these studies, the framework of the present work is more difficult for two reasons.  The first reason is the fact that our process is multidimensional, presenting real interactions between the neurons. Of course, estimation problems for multidimensional PDMP's have already been studied. However, in all cases we are aware of, a so-called ``many-to-one formula"  (see \cite{Nathalie}, see also \cite{hoffmann-olivier}) allows to express the occupation time measure of the whole system in terms of a single ``typical'' particle. This is not the case in the present paper -- and it is for this reason that we have to work under the relatively strong condition of uniform ergodicity which is implied by compact state space -- a condition which is biologically meaningful. The second, more important, reason is the fact that the transition kernel associated to jumps is degenerate. This is why the construction of our estimator is different from other constructions in previous studies. The degeneracy of the transition kernel also leads to real difficulties in the study of the regularity of the invariant density of a single neuron, see \cite{evanew} and the discussions therein.

In Section \ref{sec:results}, we describe more precisely our model and state our main results. We first provide two probabilistic results necessary to prove the convergence of the estimator: firstly, the positive Harris recurrence of the process $X$ in Theorem \ref{theo:harrisok} and secondly the properties of the invariant measure in Theorem \ref{theo:invmeasure}. The speed of convergence of our estimator is established in Theorem \ref{theo:main}. Finally, Theorem \ref{theo:lowerbound} states that our speed of convergence is optimal for the point-wise $L^2-$error, uniformly in $f.$ The key tool to prove this optimality is to study the asymptotic properties of the likelihood process for a small perturbation of the function $f$ close to $a.$

The proofs of Theorems \ref{theo:harrisok},\ref{theo:main} and \ref{theo:lowerbound} are respectively given in Sections \ref{sec:Harris}, \ref{sec:proofmain} and \ref{sec:optimal}. We refer the reader to \cite{evanew} for a proof of Theorem \ref{theo:invmeasure}.

\section{The model}\label{sec:results}

\subsection{The dynamics}
Let $N > 1 $ be fixed and $(N^i(ds, dz))_{i=1,\dots,N}$ be a family of \textit{i.i.d.}\ Poisson random measures on $\R_+ \times \R_+ $ having intensity measure $ds dz.$ We study the Markov process $X_t = (X^{ 1 }_t, \ldots , X^{ N}_t )$
taking values in $[0, K]^N$  and solving, for $i=1,\dots,N$, for $t\geq 0$,
\begin{eqnarray}\label{eq:dyn}
X^{ i}_t &= & X^{i}_0  - \lambda \int_0^t ( X^{ i}_s - m) ds -  \int_0^t \int_0^\infty 
X^{ i}_{s-}  1_{ \{ z \le  f ( X^{ i}_{s-}) \}} N^i (ds, dz) \\
&&+    \sum_{ j \neq i } \int_0^t\int_0^\infty  a_K( X_{s-}^i ) 1_{ \{ z \le  f ( X^{j}_{s-}) \}} N^j (ds, dz).
\nonumber
\end{eqnarray}  
In the above equation, $\lambda > 0 $ is a positive number, $ m$ is the equilibrium potential value such that $0 < m < K.$ Moreover, we will always assume that $ K \geq \frac2N .$ Finally, the functions $a_K : [0, K ] \to [0, K ]  $  and $f:\R_+\mapsto \R_+$  
satisfy (at least) the following assumption.

\begin{ass}\label{ass:1}~\\
1. $a_K : [0, K ] \to [0, \frac{1}{N}]  $ is non-increasing and smooth, $ a_K ( x) = \frac1N, $ for all $x < K- \frac2N$ and $ a_K ( x) < K-x$ for all $x \geq K- \frac2N .$ \\
2. $f\in C^1(\mathbb R_+),$ $f$ is non-decreasing, $f(0) = 0,$ and there exists $f_{min}:\R_+\mapsto \R_+,$ non-decreasing, such that $f(x) \geq f_{min}(x) > 0 $ for all $x > 0$.
\end{ass}
All membrane potentials take values in $ [0, K ], $ where $K$ is the maximal height of the membrane potential of a single neuron. $0$ is interpreted as resting potential (corresponding to $ \sim - 90 mV$ in real neurons) and $ K \sim 140 m V $ (see \textit{e.g.} \cite{I-09}). In \eqref{eq:dyn}, $\lambda $ gives the speed of attraction of the potential value of each single neuron to an equilibrium value $m .$ The function $a_K$ denotes the increment of membrane potential received by a neuron when an other neuron fires. For neurons with membrane potential away from the bound $K,$ this increment is equal to $\frac{1}{N}.$ However, for neurons with membrane potential close to $K,$ this increment may bring their membrane potential above the bound $K.$ This is why we impose this dynamic close to the bound $K.$

In what follows, we are interested in the estimation of the intensity function $f,$ assuming that the parameters $K, f_{min}$ and $a_K$ are known and that the function $f$ belongs to a certain H\"older class of functions. The parameters of this class of functions are also supposed to be known. The assumption $f(0)=0$ comes from biological considerations and expresses the fact that a neuron, once it has fired, has a refractory period during which it is not likely to fire. 

The generator of the process $X$ is given for any smooth test function $ \varphi : [0,K]^N \to \R $  and $x \in [0, K]^N$ by
\begin{equation}\label{eq:generator0}
L \varphi (x ) = \sum_{ i = 1 }^N f(x_i) \left[ \varphi ( \Delta_i ( x)  ) - \varphi (x) \right]
- \lambda \sum_i \left( \frac{\partial \varphi}{\partial x_i} (x) \left[  x_i -m  \right]  \right) ,
\end{equation}
where
\begin{equation}\label{eq:delta}
(\Delta_i (x))_j =    \left\{
\begin{array}{ll}
x_j + a_K ( x_j )  & j \neq i \\
0 & j = i 
\end{array}
\right\}
 .
\end{equation}

The existence of a process $X$ with such dynamics is ensured by an acceptance/rejection procedure that allows to construct solutions to (\ref{eq:dyn}) explicitly. More precisely, since each neuron spikes at maximal intensity $f(K),$ we can work conditionally on the realization of a Poisson process $\bar N$ with intensity $Nf(K).$ We construct the process $X$ considering the jump times $\bar T_n$  of $\bar N$ as candidates for the jump times of $X$ and accepting them with probability 
$$\frac{\sum_{i=1}^Nf\left( X_{\bar T_n -}^i \right) }{Nf(K)}.$$
It is then possible to construct a solution to \eqref{eq:dyn} step by step, following the deterministic drift between the jump times of $\bar N,$ and jumping according to this acceptance/rejection procedure. We refer the reader to Theorem 9.1 in chapter IV of \cite{IW} for a proof of the existence of the process $(X_t)_t.$

We denote by $P_x$ the probability measure under which the solution $(X_t)_t$ of \eqref{eq:dyn} starts from $X_0 = x \in [0,K]^N . $ Moreover, $P_\nu = \int_{[0, K]^N } \nu  (dx) P_x $ denotes the probability measure under which the process starts from $X_0 \sim \nu.$ Figure \ref{Fig. 1} is an example of trajectory for $N=5$ neurons, choosing $f=Id, \; \lambda=1, \; m=1,$ and $K=2.$

\begin{figure}[!h]
\includegraphics[scale=0.55]{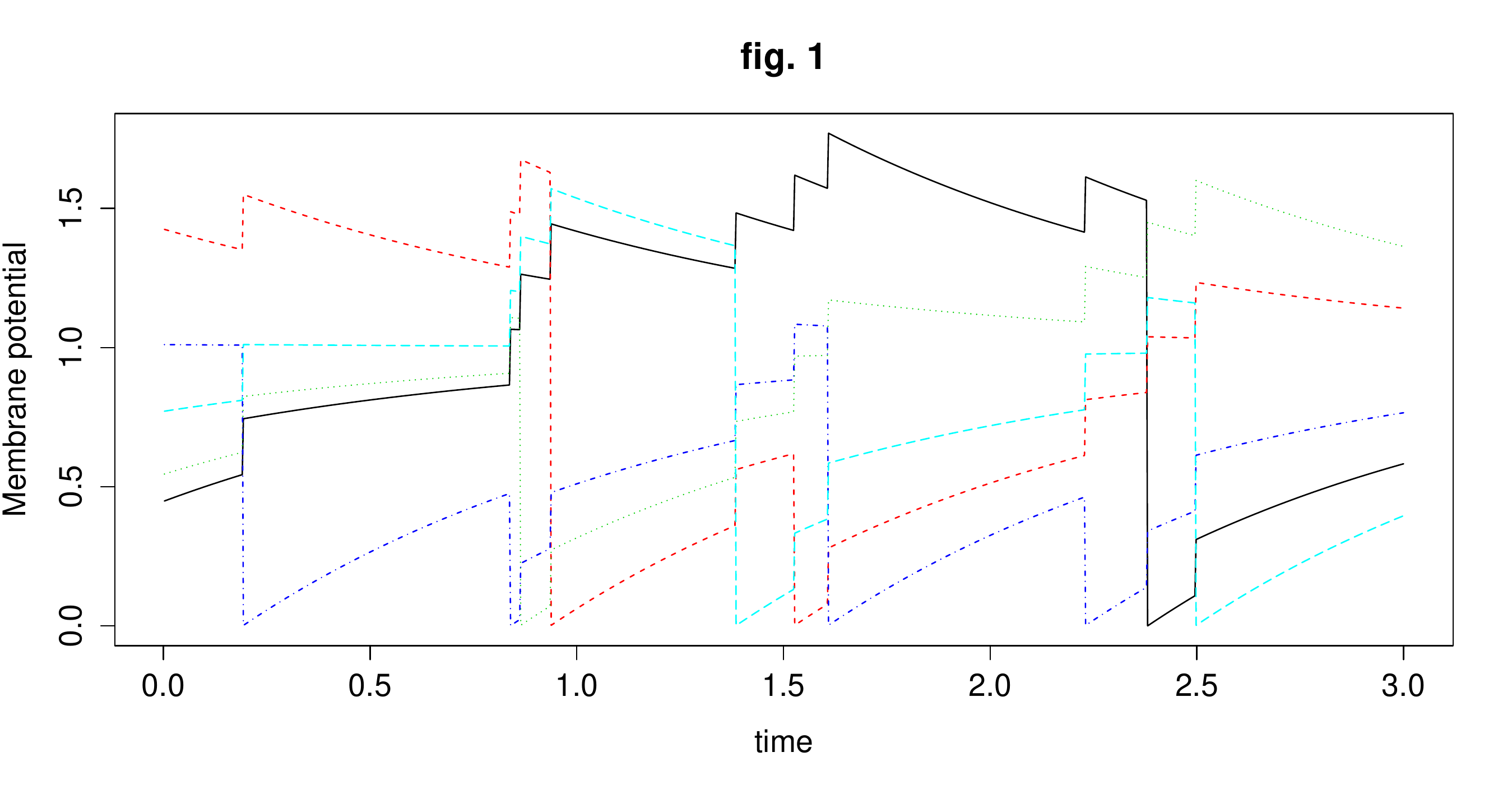} 
\caption{Trajectory of 5 neurons}
\label{Fig. 1}
\end{figure}

The aim of this work is to estimate the unknown firing rate function $f$ based on an observation of $ X$ continuously in time. Notice that for all $ 1 \le i \le N , $ $X^i$ reaches the value $0$ only through jumps. Therefore, the following definition gives the successive spike times of the $i-$th neuron, $1 \le i \le N.$ We put
$$ T^i_0 = 0 , T^i_n = \inf\{ t > T^i_{n-1} : X^i_{ t- } > 0 , X^i_t = 0 \} , n \geq 1 ,$$
and introduce the jump measures 
$$ \mu^i ( ds, dy ) = \sum_{n \geq 1 } 1_{ \{  T^i_n < \infty \} }  \delta_{ (T^i_n, X^i_{T^i_n - }) }  (dt, dy), \quad \mu ( dt, dx) = \sum_{i=1}^N \mu^i ( ds, dx) .$$
By our assumptions, $ \mu^i$ is compensated by $ \hat \mu^i ( ds, dy ) = f( X^i_s) ds \delta_{ X^i_s} ( dy) ,$ and therefore the compensator $\hat \mu $ of $\mu $ is given by 
$$ \hat \mu (dt, dy) =   f(y) \eta(dt,dy) , \mbox{ where } \eta(A \times B) =  \int_A \left( \sum_{i= 1}^N 1_B ( X^i_s)  \right) ds $$
is the total occupation time measure of the process $X.$

We will also write $T_n , n \geq 0, $ for the successive jump times of the process $X,$ \textit{i.e.}\
$$ T_0 = 0 , T_n = \inf\{ T_k^i : T_k^i > T_{ n -1}, k \geq 1 , 1 \le i \le N \}, n \geq 1 .$$

For some kernel function $Q$ such that 
\begin{equation}\label{eq:intkern}
Q \in C_{c} ( \R) , \int_{\R} Q(y) dy = 1 , 
\end{equation}
we define the kernel estimator for the unknown function $f$ at a point $a$ with bandwidth $h$, based on observation of $X$ up to time $t$ by 
\begin{equation}\label{eq:kernelest}
 \hat f_{t,h}  (a) =\frac{\int_0^t \int_{\R} Q_h(y-a) \mu(ds,dy) }{\int_0^t \int_{\R} Q_h(y-a) \eta(ds,dy)},  \mbox{ where } Q_h(y):=\frac{1}{h}Q\left(\frac{y}{h}\right) \mbox{ and } \frac{0}{0} := 0 . 
\end{equation}  
For $h$ small, $ \hat f_{t,h}  (a)$ is a natural estimator for $ f( a) .$ Indeed, this expression as a ratio follows the intuitive idea to count the number of jumps that occurred with a position close to $a$ and to divide by the occupation time of a neighborhood of $a,$ which is natural to estimate an intensity function depending on the position $a.$ More precisely, by the martingale convergence theorem, the numerator $\int_0^t \int_{\R} Q_h(y-a) \mu(ds,dy) $ should behave, for $ t$ large, as $ \int_0^t \int_{\R} Q_h(y-a) f (y) \eta (ds,dy) .$ But by the ergodic theorem, 
$$ \frac{\int_0^t \int_{\R} Q_h(y-a) f (y) \eta (ds,dy)}{\int_0^t \int_{\R} Q_h(y-a) \eta(ds,dy)} \to \frac{\pi_1( Q_h (\cdot - a) f ) }{\pi_1 ( Q_h ( \cdot - a ) ) } $$
as $t \to \infty ,$ where $ \pi_1$ is the stationary measure of each neuron $X_t^i .$ Finally, if the invariant measure $\pi_1$ is sufficiently regular, then 
$$\frac{\pi_1( Q_h (\cdot - a) f ) }{\pi_1 ( Q_h ( \cdot - a ) ) } \to  f(a) $$
as $h \to 0 . $ 

We restrict our study to fixed H\"older classes of rate functions $f.$ For that sake, we introduce the notation $ \beta = k + \alpha $ for $ k=\lfloor 
\beta \rfloor \in \N $ and $ 0 \le \alpha < 1.$ We consider the following H\"older class for arbitrary constants $F, L  > 0,$ and a function $f_{min}$ as in Assumption \ref{ass:1}.
\begin{multline}\label{eq:Hspace}
 H( \beta,F, L ,f_{min}) = \{ f \in C^k ( \R_+) : | \frac{d^l}{d x^l } f(x) | \leq F , \mbox{ for all }  0 \le l \le k, x \in [0, K] , \; \\
  f(x) \geq f_{min}(x) \mbox{ for all $x\in [0, K]$,}  \; \ | f^{(k)} (x) - f^{(k) } (y) | \le L |x- y |^\alpha  \mbox{ for all } x, y \in [0, K]\} .
\end{multline}

\subsection{Probabilistic results}
In this Section, we collect important probabilistic results. We first establish that the process $(X_t)_{t \geq 0} $ is recurrent in the sense of Harris. 

\begin{theo}\label{theo:harrisok}
Grant Assumption \ref{ass:1}. Then the process $X$ is positive Harris recurrent having unique invariant probability measure $\pi ,$ \textit{i.e.}\ for all $ B \in {\mathcal B} ( [0, K ]^N) , $ 
\begin{equation}\label{eq:definrec}
\pi  (B) > 0 \; \mbox{ implies }    P_x \left( \int_0^\infty 1_B (X_s) ds = \infty \right) =1 
\end{equation}
for all $ x \in [0, K]^N  .$  Moreover, there exist constants $C > 0 $ and  $ \kappa > 1 $ which do only depend on the class $H( \beta , F, L, f_{min} ) , $ but not on $f,$ such that 
\begin{equation}\label{eq:ergodic}
\sup_{ f \in H( \beta , F, L, f_{min} ) } \| P_t (x, \cdot ) - \pi \|_{TV} \le C \kappa^{ - t } .
\end{equation}
\end{theo}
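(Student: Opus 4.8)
The plan is to reduce the quantitative statement \reff{eq:ergodic} to a uniform Doeblin (minorization) condition and to deduce \reff{eq:definrec} from it. Precisely, I would establish that there exist $t_0 > 0$, $\delta \in (0,1)$ and a probability measure $\nu$ on $[0,K]^N$, depending only on $N, K, \lambda, m, a_K, F$ and $f_{min}$ --- hence \emph{not} on the particular $f \in H(\beta, F, L, f_{min})$ --- such that
\begin{equation}\label{eq:doeblinplan}
P_{t_0}(x, A) \ge \delta\, \nu(A) \qquad \text{for all } x \in [0,K]^N, \ A \in {\mathcal B}([0,K]^N).
\end{equation}
Granting \reff{eq:doeblinplan}, the classical iteration over the $t_0$-skeleton chain produces a unique $P_{t_0}$-invariant probability $\pi$, which is then invariant for the whole semigroup $(P_t)$, together with the bound $\| P_{n t_0}(x, \cdot) - \pi \|_{TV} \le (1-\delta)^n$; this yields \reff{eq:ergodic} with $\kappa = (1-\delta)^{-1/t_0} > 1$ and $C = (1-\delta)^{-1}$, both depending only on the class parameters. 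Moreover \reff{eq:doeblinplan} makes the $t_0$-skeleton Harris recurrent, and \reff{eq:definrec} then follows by standard arguments (regeneration of the skeleton chain together with the ergodic theorem for uniformly ergodic processes).

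To prove \reff{eq:doeblinplan} I would use the explicit acceptance/rejection construction of $X$ and isolate a single favourable scenario. Fix $0 < \tau_1 < t_0$ and a small sub-simplex ${\mathcal T} \subset \{\tau_1 < s_1 < \dots < s_N < t_0\}$, with the $s_j$ well separated and bounded away from $t_0$. Let $E_x$ be the event that: (i) no jump occurs on $[0, \tau_1]$; (ii) exactly $N$ jumps occur on $(\tau_1, t_0]$, the $j$-th of them being a spike of neuron $j$, at times $(S_1, \dots, S_N) \in {\mathcal T}$; (iii) no further jump occurs on $(S_N, t_0]$. On the event (i) the trajectory follows the pure drift, so $X^i_{\tau_1} = m + e^{-\lambda \tau_1}(x_i - m) \ge m(1 - e^{-\lambda \tau_1}) =: m_1 > 0$; and between $\tau_1$ and its own spike time each neuron only drifts towards $m$ and receives the nonnegative increments $a_K(\cdot) \ge 0$, so it stays $\ge m_1$ and hence spikes at rate $\ge f_{min}(m_1) > 0$. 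Since the total spiking rate $\sum_i f(X^i_s)$ never exceeds $NF$ (because $f \le F$ on $[0,K]$ for every $f$ in the class), all inter-spike survival factors are $\ge e^{-NF t_0}$, and a routine estimate gives
\begin{equation}\label{eq:gxplan}
P_x\big( E_x \cap \{ (S_1, \dots, S_N) \in dz \} \big) = g_x(z)\, dz , \qquad g_x(z) \ge c > 0 \ \text{ on } {\mathcal T},
\end{equation}
with $c$ depending only on the constants listed above.

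The decisive point is that on $E_x$ every neuron has spiked exactly once, so that $X_{t_0}$ is a deterministic function $\Psi(S_1, \dots, S_N)$ which does \emph{not} depend on $x$: writing $\varphi_t(y) = m + e^{-\lambda t}(y - m)$ for the drift flow and $r(y) = y + a_K(y)$ for the effect of a neighbour's spike, $X^i_{t_0}$ is obtained by starting from $0$ and alternately applying $\varphi$ over $(S_i, S_{i+1}), \dots, (S_N, t_0)$ and $r$ at $S_{i+1}, \dots, S_N$. As $X^i_{t_0}$ depends only on $S_i, S_{i+1}, \dots, S_N$, the Jacobian $D\Psi$ is triangular, and its diagonal entry $\partial X^i_{t_0}/\partial S_i$ equals $-\lambda m\, e^{-\lambda(S_{i+1} - S_i)}$ (with $S_{N+1} := t_0$) times strictly positive factors arising from $\varphi$ and from the smooth map $r$; hence $|\det D\Psi|$ is bounded away from $0$ on $\overline{{\mathcal T}}$, and triangularity gives injectivity, so $\Psi$ is a diffeomorphism from a neighbourhood of ${\mathcal T}$ onto a \emph{fixed} open set $O \subset (0,K)^N$. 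Pushing \reff{eq:gxplan} forward by $\Psi$ then gives, for every $x$,
\begin{equation}\label{eq:pushplan}
P_{t_0}(x, A) \ \ge \ \int_{{\mathcal T}} g_x(z)\, 1_A(\Psi(z))\, dz \ \ge \ c \Big( \inf_{O} |\det D\Psi^{-1}| \Big) \, \mathrm{Leb}(A \cap O) ,
\end{equation}
which is \reff{eq:doeblinplan} with $\nu(\cdot) = \mathrm{Leb}(\cdot \cap O)/\mathrm{Leb}(O)$ and $\delta = c\, \big( \inf_O |\det D\Psi^{-1}| \big)\, \mathrm{Leb}(O)$, quantities independent of $x$ and of $f$.

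I expect the main obstacle to be the analysis of $\Psi$ together with the uniformity bookkeeping: making rigorous that $|\det D\Psi|$ is bounded below on a fixed domain whose image contains a fixed open set $O$ requires distinguishing whether the trajectories stay in the region $\{ y < K - \tfrac2N \}$ where $r$ is a translation, or enter the nonlinear regime near $K$, where one must invoke $-a_K' \ge 0$ and $r([0,K]) \subset [0,K]$ to keep the Jacobian nondegenerate; one must also check carefully that the lower bounds on $P_x(E_x)$, on $g_x$ in \reff{eq:gxplan}, and on $\inf_O |\det D\Psi^{-1}|$ are genuinely uniform over the starting point $x$. The hypothesis $f(0) = 0$ of Assumption \ref{ass:1}, which makes a just-fired neuron momentarily silent, is exactly what forces the preliminary drift phase $[0, \tau_1]$ and the requirement that each neuron fire precisely once.
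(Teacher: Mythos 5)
Your proposal is correct in outline and takes a genuinely different, and somewhat cleaner, route than the paper's. The paper follows Duarte--Ost \cite{bresiliens}: it first proves a \emph{local} Doeblin minorization $P_{t^*}(x,\cdot) \geq \eta_1 \,1_{B_{\delta^*}(u^*)}(x)\,\nu$ valid only for starting points near the distinguished configuration $u^*$, then separately proves a uniform reachability bound $\inf_v P_{t^*}(v, B_{\delta^*}(u^*)) \geq \eta_2$ (Proposition~\ref{prop:control}), and finally combines the two via an explicit regeneration/coupling construction. You instead go directly for a \emph{global} Doeblin condition $P_{t_0}(x,\cdot) \geq \delta\,\nu$ uniformly in $x$, and deduce \reff{eq:ergodic} by the standard skeleton-chain iteration. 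The key observation making this one-step version work is that once \emph{all} $N$ neurons have fired exactly once, the terminal configuration $X_{t_0} = \Psi(S_1,\dots,S_N)$ is \emph{exactly} independent of $x$ (this is the ``regeneration property of spiking'' that the paper also exploits, but the paper carries along $O(\delta)$- and $O(\varepsilon)$-remainders in $u$ rather than using the exact independence). Your approach buys a shorter argument and dispenses with the separate reachability step; the paper's two-step version has the advantage of quoting \cite{bresiliens} essentially verbatim. Both approaches hinge on the same scenario (all $N$ neurons fire in the order $1,\dots,N$ in a small time window) and the same nondegeneracy of a triangular Jacobian with diagonal entries of order $-\lambda m$.

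One caveat worth flagging: your proposed resolution of the Jacobian issue near $K$ is not sufficient as stated. You write that in the nonlinear regime one ``must invoke $-a_K' \geq 0$ and $r([0,K]) \subset [0,K]$ to keep the Jacobian nondegenerate,'' but $a_K' \leq 0$ only gives $r'(y) = 1 + a_K'(y) \leq 1$, not $r' > 0$; Assumption~\ref{ass:1} by itself does not rule out $r'$ vanishing (or even changing sign) on $[K - \tfrac{2}{N}, K]$. The way to make the argument close is what you hint at in your first alternative: by taking $t_0$ small, after its own reset each neuron $i$ stays below $m(1-e^{-\lambda t_0}) + (N-i)/N$, which is $< 1 + O(t_0)$; thus if $K$ is large enough that this region is contained in $\{y < K - \tfrac{2}{N}\}$, every $r$ that enters $\Psi$ is the pure translation $y \mapsto y + \tfrac1N$ and $r' \equiv 1$. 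The paper faces exactly the same constraint and handles it by confining the Jacobian analysis to a neighbourhood of $u^*$ and declaring ``w.l.o.g.\ $K > 1 + \tfrac1N$,'' so this is a shared, not a new, limitation. Everything else — the uniform lower bound on $g_x$ via the preliminary drift phase and $f \geq f_{min}$, the role of $f(0)=0$, the upper triangular structure from the reset-to-$0$, the pushforward to get $\nu = \mathrm{Leb}(\cdot \cap O)/\mathrm{Leb}(O)$, and the derivation of \reff{eq:definrec} and \reff{eq:ergodic} from the Doeblin bound — is sound.
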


It is well-known that the behavior of a kernel estimator such as the one introduced in \eqref{eq:kernelest} depends heavily on the regularity properties of the invariant probability measure of the system. Our system is however very degenerate. Firstly, it is a piecewise deterministic Markov process (PDMP) in dimension $N,$ with interactions between particles. Hence, no Brownian noise is present to smoothen things. Moreover, the transition kernels associated to the jumps of system \eqref{eq:dyn} are highly degenerate (recall \eqref{eq:delta}). The transition kernel 
$$ K ( x, dy) = {\mathcal L} ( X_{T_1} |X_{T_1- } = x ) (dy ) = \sum_{i=1}^N \frac{f ( x^i )}{\bar f (x) } \delta_{ \Delta^i (x) } ( dy ) $$
with $\bar f(x):= \sum_{i=1}^N f(x^i)$ puts one particle (the one which is just spiking) to the level $0.$ As a consequence, the above transition does not create density -- and it even destroys smoothness due to the reset to $0$ of the spiking neuron. 
Finally, the only way that ``smoothness'' is generated by the process is the smoothness which is present in the ``noise of the jump times'' (which are basically of exponential density). For this reason, we have to stay away from the point $x=m,$ where the drift of the flow vanishes. Moreover, the reset-to-$0$ of the spiking particles implies that we are not able to say anything about the behavior of the invariant density of a single particle in $0 $ (actually, near to $0$) neither. Finally, we also have to stay strictly below the upper bound of the state space $K.$ That is why we introduce the following open set $ S_{d , \beta }$ given by
\begin{equation}\label{eq:estimset}
 S_{d , \beta } := \{ w \in [0, K] : \frac{\lfloor 
\beta \rfloor}{N} < w < K-   \frac{\lfloor 
\beta \rfloor}{N} , |w-m| > d \} ,
\end{equation}
where $\beta$ is the smoothness of the fixed class $ H( \beta , F, L, f_{min})  $ that we consider and where $ d $ is fixed such that $d > \frac{\lfloor 
\beta \rfloor +2}{N}.$ Notice that $S_{d , \beta } $ also depends on $K, m $ and $N$ which are supposed to be known. We are able to obtain a control of the invariant measure only on this set $S_{d , \beta }.$ The dependence in $\beta$ is due to the fact that the regularity of $f$ is transmitted to the invariant measure by the means of successive integration by parts (see \cite{evanew} for more details).

We quote the following theorem from \cite{evanew}.

\begin{theo}\label{theo:invmeasure} (Theorem 5 of \cite{evanew})

Suppose that $f \in H( \beta , F, L, f_{min} ).$ Let  
$$\pi_1   := {\mathcal L}_\pi  ( X_t^1) $$
be the invariant measure of a single neuron, \textit{i.e.}\ $ \int g d  \pi_1 = E_\pi ( g( X^1_t)   ) .$ 
Then $\pi_1 $ possesses a bounded continuous Lebesgue density $\pi^1 $ on $S_{d , \beta } $ for any $d$ such that $d > (\lfloor 
\beta \rfloor +2)/N ,$ which is bounded on $S_{d , \beta } ,$ uniformly in $f \in H( \beta , F, L, f_{min} ).$ 
Moreover, $\pi^1 \in C^k ( S_{d , \beta })  $ and 
\begin{equation}\label{eq:imctrl}
\sup_{\ell \le \lfloor 
\beta \rfloor  , w \in S_{d , \beta } } |  \pi_1^{(\ell )} ( w)  | + \sup_{w \neq w' , w, w' \in S_{d , \beta }} \frac{\pi_1^{(\lfloor 
\beta \rfloor )} (w) - \pi_1^{(\lfloor 
\beta \rfloor )} (w') }{|w-w'|^\alpha } \le C_F,
\end{equation}
where the constant $C_F $ depends on $d $ and on the smoothness class $ H( \beta , F, L, f_{min} ),$ but on nothing else. 
\end{theo}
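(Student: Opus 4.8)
The statement is quoted from \cite{evanew}; the approach I would take to prove it rests on a Feynman--Kac (``cycle'') representation of the invariant measure in terms of the deterministic flow and of the invariant law of the chain embedded at the jump times, combined with a change of variables trading ``time elapsed since the last jump'' for ``current position of neuron $1$'', iterated so as to gain smoothness. To set this up, write $\phi_t(x)=\big(m+(x^i-m)e^{-\lambda t}\big)_{1\le i\le N}$ for the deterministic flow and $\bar f(x)=\sum_{i=1}^N f(x^i)$ for the total firing rate, so that $(X_t)$ is the PDMP with flow $\phi$, jump rate $\bar f$ and post-jump kernel $K(x,dy)=\sum_i\frac{f(x^i)}{\bar f(x)}\delta_{\Delta_i(x)}(dy)$. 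By Theorem \ref{theo:harrisok} the process is positive Harris recurrent, hence, by the standard correspondence between invariant measures of a PDMP and of its jump chain, the chain $Y_n:=X_{T_n}$ sampled at the jump times $T_n$ has a unique invariant probability measure $\nu$ on $[0,K]^N$, with transition kernel
$$
\Pi(y,dz)=\int_0^\infty\bar f(\phi_s(y))\,e^{-\int_0^s\bar f(\phi_u(y))\,du}\,K(\phi_s(y),dz)\,ds,
$$
and the mean inter-jump time $\bar\nu:=\int\nu(dy)\,\E_y[T_1]=\int\nu(dy)\int_0^\infty e^{-\int_0^s\bar f(\phi_u(y))\,du}\,ds$ is finite and bounded below by $1/(NF)$ (since $\bar f\le NF$ on $H(\beta,F,L,f_{min})$). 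The cycle formula then yields, for bounded measurable $g$,
$$
\pi_1(g)=\frac{1}{\bar\nu}\int_{[0,K]^N}\nu(dy)\int_0^\infty g\big(m+(y^1-m)e^{-\lambda s}\big)\,e^{-\int_0^s\bar f(\phi_u(y))\,du}\,ds.
$$

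In the inner integral I would substitute $w=m+(y^1-m)e^{-\lambda s}$, valid as soon as $y^1\ne m$: since $|w-m|=|y^1-m|e^{-\lambda s}$ decreases strictly from $|y^1-m|$ to $0$, the variable $w$ sweeps the interval with endpoints $m$ and $y^1$, with $ds=-\frac{dw}{\lambda(w-m)}$. This exhibits the Lebesgue density
$$
\pi^1(w)=\frac{1}{\bar\nu\,\lambda\,|w-m|}\int_{[0,K]^N}\mathbf{1}_{\{\,|y^1-m|>|w-m|,\ (y^1-m)(w-m)>0\,\}}\;e^{-\int_0^{s(w,y)}\bar f(\phi_u(y))\,du}\,\nu(dy),
$$
with $s(w,y)=-\frac1\lambda\log\frac{w-m}{y^1-m}$. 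For $w\in S_{d,\beta}$ one has $|w-m|>d$, so the prefactor is at most $\frac{1}{\bar\nu\lambda d}\le\frac{NF}{\lambda d}$ and the integral is at most $1$; this already shows that $\pi^1$ is well defined and bounded on $S_{d,\beta}$, while continuity on $S_{d,\beta}$ follows because for $\nu$-a.e.\ $y$ the integrand is continuous in $w$ and the domain $\{|y^1-m|>|w-m|\}$ moves continuously. Since $F,\lambda,d,N$ and the lower bound on $\bar\nu$ are attached to the class $H(\beta,F,L,f_{min})$ and not to the particular $f$, this bound is uniform in $f$.

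To obtain the $C^k$ regularity with $k=\lfloor\beta\rfloor$ and the quantitative estimate \eqref{eq:imctrl}, differentiating the one-jump formula directly is neither legitimate nor enough: $\nu$ is supported on the union of the coordinate hyperplanes $\{y^i=0\}$ (a spike resets the firing neuron to $0$) and the Dirac structure of $K$ provides no smoothing. Instead I would unfold $k$ further jumps, iterating the fixed-point identity $\nu=\nu\Pi$ to rewrite $\pi^1(w)$ as an integral against $\nu$ of an expression depending on $k+1$ successive flow durations $s_0,\dots,s_k$ and on the identities of the intermediate spiking neurons; the $w$-dependence is still carried by the last flow segment alone, as above, but the auxiliary durations $s_1,\dots,s_k$, together with the $C^k$-regularity of $f$ entering through the weights $f(\cdot)/\bar f(\cdot)$ and through the Feynman--Kac exponentials, allow $k$ successive integrations by parts in $s_1,\dots,s_k$, each transferring one derivative off the target position $w$ onto the smooth data; this produces $k$ derivatives of $\pi^1$ in $w$ and turns the H\"older modulus of $f^{(k)}$ into that of $\pi_1^{(k)}$, which is \eqref{eq:imctrl}. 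Each such integration by parts requires its boundary terms to vanish, which is the source of the restrictions $w>\lfloor\beta\rfloor/N$ and $w<K-\lfloor\beta\rfloor/N$: extracting $k$ derivatives forces the unfolding of $k$ jumps, each shifting the relevant coordinate by a step of size $1/N$ away from the reset level $0$ (resp.\ from the ceiling $K$, near which $a_K$ is moreover no longer constant and one reflects the picture); we refer to \cite{evanew} for the precise accounting. All the constants produced depend only on $F,L,f_{min},d$ and on $K,m,\lambda,N$, whence the asserted uniformity of $C_F$ over $f\in H(\beta,F,L,f_{min})$.

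The genuinely delicate point is this last unfolding. Because the transition at a jump is a Dirac mass, no regularisation is available for free, and one must keep precise track of which among the many intermediate time variables still carries enough smoothness to absorb a derivative in $w$ without producing a contribution that blows up near the reset value $0$, near the equilibrium $m$ (where the drift degenerates and the Jacobian $1/(\lambda|w-m|)$ explodes, forcing $|w-m|>d$), or near $K$. Organising the combinatorics of the $k$-fold unfolding, choosing the integration-by-parts variables, and verifying that all the resulting bounds remain uniform over the whole H\"older class is where the real work lies; the complete argument is carried out in \cite{evanew}.
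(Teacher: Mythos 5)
Your proposal follows essentially the same route as the paper: both represent $\pi_1$ via a cycle formula over the jump-chain invariant law (you work with $\nu$ on $X_{T_n}$ while the paper works with the equivalent $\pi^Z$ on $X_{T_n-}$), both trade elapsed flow time for position to extract the Lebesgue density with the $1/(\lambda|w-m|)$ Jacobian — matching the paper's formula $\pi_1(z)=\int\pi(dx)\int K(x,dy)H_f^y(\kappa_y(z))\,\mathbf 1_{I_y^m}(z)/|b(z)|$ — and both identify the $k$-fold unfolding plus integration by parts in the intermediate inter-jump durations as the mechanism transmitting the H\"older regularity of $f$ to $\pi_1$. Since the paper itself defers the regularity argument entirely to \cite{evanew} (where Theorem~\ref{theo:invmeasure} is proved), and you do the same after correctly reconstructing the one-jump density representation and the reasons for the restrictions defining $S_{d,\beta}$, this is the intended proof strategy.
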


\subsection{Statistical results}
We can now state the main theorem of our paper which describes the quality of our estimator in the minimax theory. We assume that $ m$ and $\lambda $ are known and that $f$ is the only parameter of interest of our model. We shall always write $ P_x^f $ and  $E_x^f$ in order to emphasize the dependence on the unknown $f.$ Fix some $r > 0 $ and some suitable point $a \in S_{d , \beta } .$ For any possible rate of convergence $(r_t )_{t \geq 0 } $ increasing to $\infty $ and for any process of ${\mathcal F}_t-$measurable estimators $\hat f_t $ we shall consider point-wise square risks of the type 
$$ \sup_{ f \in H( \beta , F, L, f_{min} )} r_t^2 E_x^f \left[ | \hat f_t ( a)  - f(a) |^2 | A_{t,r} \right] ,$$
where 
$$A_{t,r}:= \left\{\frac{1}{Nt} \int_0^t \int_{\R} Q_h(y-a) \eta(ds,dy) \geq r \right\}$$ 
is roughly the event ensuring that sufficiently many observations have been made near $a,$ during the time interval $[0, t ].$ We are able to choose $r$ small enough such that 
\begin{equation}\label{eq:noloss}
\underset{t \to \infty }{\lim\inf} \inf_{f \in H( \beta , F, L, f_{min} ) } P^f_x (A_{t,r} ) = 1,
\end{equation}
see Proposition \ref{prop:imlb} below.

Recall that the kernel $Q$ is chosen to be of compact support. Let us write $R$ for the diameter of the support of $Q,$ therefore $Q(x) =  0 $ if $ |x| \geq R.$  For any fixed $a \in S_{d , \beta } , $ write $h_0 := h_0 ( a, R, \beta , d )  := \sup \{ h > 0 : B_{ h R} (a) \subset S_{d/2, \beta } \} .$ Here, $ B_{h R } (a) = \{ y \in \R_+ : |y - a | <  h R \} .$  

\begin{theo}\label{theo:main}
Let $f \in  H( \beta , F, L, f_{min} )  $ and choose $Q \in C_{c} ( \R)$ such that $\int_{\R}  Q(y) y^j dy = 0 $ for all $ 1 \le j \le \lfloor \beta \rfloor ,$ and $ \int_{\R} |y|^\beta Q(y) dy < \infty .$ Then there exists $r^*>0$ such that the following holds for any $ a \in S_{d , \beta } , r \le r^* $ and for any $ h_t \le h_0.$ \\

(i) For the kernel estimate \eqref{eq:kernelest} with bandwidth $h_t = t^{ - \frac{1}{2 \beta + 1 } } ,$ for all $x \in [0,K],$
$$ \underset{t \to \infty }{\lim\sup} \sup_{ f \in H( \beta , F, L, f_{min} ) }t^{\frac{2 \beta }{2 \beta +1} } E_x^f \left[  | \hat f_{t, h_t} (a) - f(a) |^2  | A_{t,r}  \right]  < \infty .$$

(ii) Moreover, for $ h_t = o ( t^{ - 1 /(1 + 2 \beta ) } ) ,$ for every $f \in H( \beta , F, L, f_{min} ) $ and $ a \in S_{d , \beta } $
$$  \sqrt{th_t} \left( \hat f_{t,h_t} ( a) - f(a) \right) \to {\mathcal N} ( 0, \Sigma ( a) ) $$
weakly under $ P_x^f ,$ 
where $\Sigma ( a) =  \frac{f(a)}{N \pi_1 (a) } \int Q^2 (y) dy .$ 
\end{theo}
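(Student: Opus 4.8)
The plan is to use the semimartingale structure of the jump measure. Write $V_t:=\int_0^t\int_\R Q_h(y-a)\,\eta(ds,dy)=\int_0^t\sum_{i=1}^N Q_h(X^i_s-a)\,ds$ for the denominator and $M_t:=\int_0^t\int_\R Q_h(y-a)\,\mu(ds,dy)$ for the numerator of $\hat f_{t,h}(a)$. Since $\hat\mu(dt,dy)=f(y)\eta(dt,dy)$, one has $M_t=\tilde M_t+D_t+f(a)V_t$, where
$$\tilde M_t:=\int_0^t\int_\R Q_h(y-a)\,(\mu-\hat\mu)(ds,dy),\qquad D_t:=\int_0^t\sum_{i=1}^N Q_h(X^i_s-a)\bigl(f(X^i_s)-f(a)\bigr)\,ds,$$
so that on the event $A_{t,r}$ (where $V_t\geq Ntr>0$) we get $\hat f_{t,h}(a)-f(a)=\tilde M_t/V_t+D_t/V_t$. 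Here $\tilde M_t$ is a square-integrable, purely discontinuous $\F_t$-martingale, and since almost surely no two neurons spike simultaneously its predictable quadratic variation is $\langle\tilde M\rangle_t=\int_0^t\int_\R Q_h(y-a)^2\,\hat\mu(ds,dy)=\int_0^t\sum_{i=1}^N Q_h(X^i_s-a)^2 f(X^i_s)\,ds$. The term $D_t/V_t$ carries the bias, $\tilde M_t/V_t$ the stochastic fluctuations. Throughout, fix $r^*$ as provided by Proposition \ref{prop:imlb} so that \reff{eq:noloss} holds for every $r\le r^*$, and work with such $r$; everything is stated for $t$ large enough that $h_t\le h_0$.

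I first estimate the bias. Exchangeability of \reff{eq:dyn} shows that all neurons have the same invariant marginal $\pi_1$, so by the ergodic theorem for positive Harris recurrent processes (Theorem \ref{theo:harrisok}), $D_t/(Nt)\to\bar D_h:=\int_\R Q_h(w-a)(f(w)-f(a))\pi_1(w)\,dw$ and $V_t/(Nt)\to\bar V_h:=\int_\R Q_h(w-a)\pi_1(w)\,dw$. For $h\le h_0$ the support $B_{hR}(a)$ lies in a fixed neighbourhood of $a$ on which, by Theorem \ref{theo:invmeasure} and $a\in S_{d,\beta}$, $\pi_1$ is bounded and of class $C^k$ with $k=\lfloor\beta\rfloor$; substituting $w=a+hu$ and Taylor-expanding the $C^k$ function $w\mapsto(f(w)-f(a))\pi_1(w)$, which vanishes at $w=a$, to order $k$, the moment conditions $\int_\R Q(u)u^j\,du=0$ ($1\le j\le k$) annihilate all polynomial terms, and the $\alpha$-Hölder bounds on $f^{(k)}$ (definition of $H(\beta,F,L,f_{min})$) and on $\pi_1^{(k)}$ \reff{eq:imctrl} leave $|\bar D_h|\le C_F h^\beta\int_\R|Q(u)|\,|u|^\beta\,du$, while $\bar V_h\to\pi_1(a)>0$ (Proposition \ref{prop:imlb} and continuity of $\pi_1$ at $a$). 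Hence on $A_{t,r}$, $|D_t/V_t|\le r^{-1}(|\bar D_h|+|\epsilon_D|)$ with $\epsilon_D:=D_t/(Nt)-\bar D_h$; as $Q_h(\cdot-a)(f(\cdot)-f(a))$ has sup-norm $\le FR\|Q\|_\infty$ uniformly in $h$ and in $f$, a standard covariance/mixing argument based on the uniform geometric ergodicity \reff{eq:ergodic} gives $E_x^f[\epsilon_D^2]\le C/t$.

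For the martingale term, on $A_{t,r}$ one has $|\tilde M_t/V_t|\le(Ntr)^{-1}|\tilde M_t|$ and $E_x^f[\tilde M_t^2]=E_x^f[\langle\tilde M\rangle_t]$. Bounding $\langle\tilde M\rangle_t\le h^{-1}\|Q\|_\infty f(K)\int_0^t\sum_i|Q_h(X^i_s-a)|\,ds$ and using \reff{eq:ergodic} together with the boundedness of $\pi_1$ near $a$ to get $E_x^f[|Q_h(X^i_s-a)|]\le C+C'h^{-1}\kappa^{-s}$, then integrating in $s$, yields $E_x^f[\langle\tilde M\rangle_t]\le CNt/h$. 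Combining the two contributions via $(x+y)^2\le2x^2+2y^2$ and passing from $E_x^f[\,\cdot\,1_{A_{t,r}}]$ to $E_x^f[\,\cdot\mid A_{t,r}]$ through $\liminf_t\inf_f P_x^f(A_{t,r})=1$ \reff{eq:noloss}, all constants being uniform over the Hölder class, we obtain, for $h_t=t^{-1/(2\beta+1)}$,
$$\sup_{f}t^{\frac{2\beta}{2\beta+1}}E_x^f\bigl[|\hat f_{t,h_t}(a)-f(a)|^2\bigm|A_{t,r}\bigr]\le\sup_f t^{\frac{2\beta}{2\beta+1}}\Bigl(Ch_t^{2\beta}+\frac{C}{th_t}+\frac{C}{t}\Bigr)=O(1),$$
which is part (i). For part (ii), the undersmoothing $h_t=o(t^{-1/(2\beta+1)})$ gives $\sqrt{th_t}\,|\bar D_{h_t}|\to0$ and $\sqrt{th_t}\,\|\epsilon_D\|_{L^2}\le(Ch_t)^{1/2}\to0$, hence $\sqrt{th_t}\,D_t/V_t\to0$ in probability; the same ergodic analysis (now requiring only convergence, constants no longer uniform in $f$) gives $V_t/t\to N\pi_1(a)$ in probability and $\tfrac{h_t}{t}\langle\tilde M\rangle_t\to Nf(a)\pi_1(a)\int_\R Q^2(y)dy$ in probability, the latter because $\tfrac{h_t}{t}E_x^f[\langle\tilde M\rangle_t]=N\int_\R Q^2(u)f(a+h_tu)\pi_1(a+h_tu)\,du+o(1)$. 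Since the jumps of the time-rescaled martingale $u\mapsto\sqrt{h_t/t}\,\tilde M_{ut}$, $u\in[0,1]$, are uniformly bounded by $\|Q\|_\infty/\sqrt{th_t}\to0$, Rebolledo's martingale central limit theorem applies and gives $\sqrt{h_t/t}\,\tilde M_t\to\mathcal N\bigl(0,Nf(a)\pi_1(a)\int_\R Q^2(y)dy\bigr)$ weakly; dividing by $V_t/t$ and adding the negligible term $\sqrt{th_t}\,D_t/V_t$, Slutsky's lemma yields $\sqrt{th_t}(\hat f_{t,h_t}(a)-f(a))\to\mathcal N(0,\Sigma(a))$ weakly under $P_x^f$, with $\Sigma(a)=\tfrac{f(a)}{N\pi_1(a)}\int_\R Q^2(y)dy$.

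The main obstacle, common to both parts, is that the estimator localizes at scale $h_t\to0$ exactly as $t\to\infty$: the kernel obeys $\|Q_{h_t}(\cdot-a)\|_\infty\sim h_t^{-1}$ (and its square $\sim h_t^{-2}$), so neither the ergodic theorem nor the martingale CLT can be invoked for a fixed test function, and every estimate above must be made quantitative. The estimates succeed only because this blow-up of the sup-norm is compensated by the smallness $\pi_1(B_{h_tR}(a))=O(h_t)$ of the kernel's support; making this precise forces one to run the uniform geometric ergodicity of Theorem \ref{theo:harrisok} in tandem with the boundedness and $C^k$-regularity of the single-neuron invariant density from Theorem \ref{theo:invmeasure}, and the real work is the bookkeeping that keeps all constants uniform over $H(\beta,F,L,f_{min})$ and correctly balances the two competing rates $t\to\infty$ and $h_t\to0$.
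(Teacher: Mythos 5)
Your proposal follows the same route as the paper's: the same decomposition of $\hat f_{t,h}(a)-f(a)$ into the martingale part $\tilde M_t/V_t$, the deterministic kernel bias $\bar D_h=\pi_1\bigl(Q_h(\cdot-a)(f(\cdot)-f(a))\bigr)$, and the centered additive-functional fluctuation $\epsilon_D$ (which is exactly the paper's $\frac{1}{Nt}\int_0^t\int\tilde Q_{h,f}\,\eta$ from Proposition \ref{prop:erden}); the same Taylor/kernel-moment estimate for $\bar D_h$ and the same conditioning on $A_{t,r}$ for (i); and for (ii) a martingale CLT plus Slutsky, as in the paper. There is, however, one genuine gap.

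You assert that $Q_h(\cdot-a)\bigl(f(\cdot)-f(a)\bigr)$ has sup-norm bounded by $FR\|Q\|_\infty$ uniformly in $h$ and $f$. That bound requires $f$ to be Lipschitz with constant $F$, which holds over $H(\beta,F,L,f_{min})$ only when $\beta\ge 1$. When $\beta=\alpha<1$ the class only gives $|f(y)-f(a)|\le L|y-a|^\alpha$, so on the support of $Q_h(\cdot-a)$
$$
\bigl\|Q_h(\cdot-a)\bigl(f(\cdot)-f(a)\bigr)\bigr\|_\infty\le \|Q\|_\infty\, L\, R^\alpha\, h^{\alpha-1},
$$
which blows up as $h\to 0$. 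With this correct sup-norm, the ``standard covariance/mixing argument'' you invoke (bounding the covariance at lag $u$ by $\|g\|_\infty^2 C\kappa^{-u}$) only gives $E_x^f[\epsilon_D^2]\le C\, h^{2(1\wedge\beta)-2}/t$, and at $h_t=t^{-1/(2\beta+1)}$ one finds
$t^{\frac{2\beta}{2\beta+1}}\cdot h_t^{2(1\wedge\beta)-2}/t = t^{\frac{1-2\beta}{2\beta+1}}$,
which diverges whenever $\beta<1/2$. So your control of $\epsilon_D$, as written, fails for low smoothness.

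What rescues the estimate — and what Proposition \ref{prop:erden} of the paper actually does — is to use the shrinking support twice over. One first conditions and writes the second moment as
$\frac{2}{(Nt)^2}E_\pi\bigl[\int_0^t\int|\tilde Q_{h,f}|\,\eta(ds,dx)\,\bigl|E_\pi\bigl(\int_s^t\int\tilde Q_{h,f}\,\eta\mid\mathcal F_s\bigr)\bigr|\bigr]$.
The conditional-expectation factor is controlled via geometric ergodicity \reff{eq:ergodic} by $\|\tilde Q_{h,f}\|_\infty\int\kappa^{-u}du=O(h^{(1\wedge\beta)-1})$, while the outer integral is, under stationarity, $Nt\,\pi_1(|\tilde Q_{h,f}|)=O(t\,h^{1\wedge\beta})$ because $\pi_1$ has a bounded density near $a$ and the kernel localizes to a strip of width $O(h)$. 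Multiplying the two gives $E_x^f[\epsilon_D^2]\le C_2\,h^{2(1\wedge\beta)-1}/t$, which at $h_t=t^{-1/(2\beta+1)}$ is $o(t^{-2\beta/(2\beta+1)})$ for every $\beta>0$. Replacing your bound $E_x^f[\epsilon_D^2]\le C/t$ by this one (and likewise in part (ii), where you reuse it), the rest of your argument — the bound $E_x^f[\langle\tilde M\rangle_t]\le CNt/h$, the Lindeberg condition from the $\|Q\|_\infty/\sqrt{th_t}\to 0$ jump bound, and the Slutsky step — is sound and coincides with the paper's proof.
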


The next theorem shows that the rate of convergence achieved by the kernel estimate $ \hat f_{t, t^{- 1/(2 \beta + 1 ) } } $ is indeed optimal. 

\begin{theo}\label{theo:lowerbound}
Let $ a \in S_{d , \beta } $ and $x \in [0, K ] $ be any starting point. Then we have
\begin{equation}
\underset{t \to \infty }{\lim\inf} \inf_{ \hat f_t} \sup_{ f \in H( \beta , F, L, f_{min} ) }  t^{ \frac{2 \beta }{1 + 2 \beta } } E_x^f  [ | \hat f_t ( a) - f(a) |^2 ] > 0 ,
\end{equation}
where the infimum is taken over the class of all possible estimators $\hat f_t (a) $ of $f(a) .$ 
\end{theo}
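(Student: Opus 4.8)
The plan is to establish the lower bound via a classical two-point (Le Cam) argument, reducing the minimax risk to a testing problem between $f$ and a perturbation $f_t$ that differs from $f$ only in a shrinking neighborhood of $a$. Concretely, fix a reference function $f_0 \in H(\beta, F, L, f_{min})$ with $f_0 \equiv$ const near $a$ (bounded away from $0$, which is possible since $a \in S_{d,\beta}$ lies away from $0$), and set
\begin{equation*}
f_t(x) = f_0(x) + c\, h_t^{\beta}\, \psi\!\left( \frac{x-a}{h_t} \right), \qquad h_t = t^{-1/(2\beta+1)},
\end{equation*}
where $\psi \in C_c^\infty(\R)$ is a fixed bump supported in $(-1,1)$ and $c > 0$ is a small constant. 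First I would check that for $c$ small enough, $f_t \in H(\beta, F, L, f_{min})$ for all large $t$: the H\"older-norm condition is scale-invariant in exactly the way that makes the $h_t^\beta$ scaling work, and the lower bound $f_t \geq f_{min}$ holds because the perturbation is uniformly small while $f_0$ is bounded below. Also $|f_t(a) - f_0(a)| = c\,\psi(0) h_t^\beta$, so if a hypothetical estimator had risk $o(t^{-2\beta/(2\beta+1)}) = o(h_t^{2\beta})$ under both $P_x^{f_0}$ and $P_x^{f_t}$, it could be used to distinguish the two hypotheses with vanishing error.

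The core of the argument is then the standard reduction: it suffices to show that the total variation distance (equivalently, the Hellinger affinity or the Kullback--Leibler divergence) between $P_x^{f_0}$ restricted to $\mathcal F_t$ and $P_x^{f_t}$ restricted to $\mathcal F_t$ stays bounded away from $1$ (resp. $0$, resp. is $O(1)$) as $t \to \infty$. For this I would use the explicit Girsanov-type formula for the likelihood ratio of two PDMPs differing only in the jump rate. Since the two processes have the same flow and the same jump kernel $K(x,dy)$ (the perturbation changes only the firing intensity, not where the potential is reset), the log-likelihood ratio takes the classical form
\begin{equation*}
\log \frac{dP_x^{f_t}}{dP_x^{f_0}}\Big|_{\mathcal F_t} = \int_0^t \int_{\R} \log\frac{f_t(y)}{f_0(y)}\, \mu(ds,dy) - \int_0^t \int_{\R} \left( f_t(y) - f_0(y) \right) \eta(ds,dy).
\end{equation*}
Under $P_x^{f_0}$, the compensator of $\mu$ is $f_0(y)\eta(ds,dy)$, so a direct computation of $E_x^{f_0}$ of the Kullback--Leibler divergence gives
\begin{equation*}
E_x^{f_0}\left[ \log \frac{dP_x^{f_0}}{dP_x^{f_t}} \right] = E_x^{f_0}\left[ \int_0^t \int_{\R} \left( f_t(y)\log\frac{f_t(y)}{f_0(y)} - f_t(y) + f_0(y) \right) \eta(ds,dy) \right].
\end{equation*}
The integrand is pointwise $\asymp (f_t(y)-f_0(y))^2 / f_0(y) \lesssim h_t^{2\beta}\,\psi^2((y-a)/h_t)$, supported in $B_{h_t}(a)$. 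To bound the expected occupation time of $B_{h_t}(a)$, I would invoke Theorem \ref{theo:invmeasure} and Theorem \ref{theo:harrisok}: by uniform ergodicity, $E_x^{f_0}[\eta([0,t]\times B_{h_t}(a))] \leq N t \,\pi_1(B_{h_t}(a)) + O(1) \lesssim t\, h_t$ since $\pi^1$ is bounded on $S_{d,\beta}$ (with $h_t$ small enough that $B_{h_t}(a) \subset S_{d/2,\beta}$, i.e. $h_t \le h_0$). Hence the KL divergence is $\lesssim t\, h_t \cdot h_t^{2\beta} = t\, h_t^{2\beta+1} = O(1)$ by the choice $h_t = t^{-1/(2\beta+1)}$. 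Taking $c$ small makes this constant as small as we like. Then Le Cam's two-point inequality (or the Neyman--Pearson / Pinsker bound) gives
\begin{equation*}
\inf_{\hat f_t} \max_{g \in \{f_0, f_t\}} E_x^g[|\hat f_t(a) - g(a)|^2] \;\geq\; \frac14 |f_t(a) - f_0(a)|^2 \cdot \left( 1 - \text{TV} \right) \;\gtrsim\; h_t^{2\beta} = t^{-2\beta/(2\beta+1)},
\end{equation*}
which is exactly the claimed lower bound since the max over two points is dominated by the sup over $H(\beta, F, L, f_{min})$.

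The main obstacle I anticipate is the control of the expected occupation time $E_x^{f_0}[\eta([0,t]\times B_{h_t}(a))]$ uniformly in a way that couples cleanly with the ergodicity estimate — one must be careful that the invariant density bound from Theorem \ref{theo:invmeasure} holds on $S_{d/2,\beta}$ and that $B_{h_t}(a)$ indeed sits inside it for $h_t \le h_0$, and that the convergence rate $\kappa^{-t}$ in \eqref{eq:ergodic} contributes only an $O(1)$ additive term (not an $O(t)$ term) to the occupation time, so that the product with $h_t^{2\beta+1}$ really is bounded. A secondary technical point is justifying the likelihood-ratio formula and the absolute continuity $P_x^{f_t} \ll P_x^{f_0}$ on $\mathcal F_t$; this follows from the Girsanov theorem for jump processes since $f_0$ and $f_t$ are both bounded and bounded below on the relevant range, but the degeneracy of the jump kernel means one should check that the change of measure affects only the intensity, which is exactly the structure visible in \eqref{eq:dyn} and \eqref{eq:kernelest}. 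Once these are in place, everything else is the routine Le Cam machinery.
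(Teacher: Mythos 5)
Your proposal is correct and follows essentially the same route as the paper: the same local perturbation $f_t = f_0 + c\,h_t^{\beta}\psi((\cdot-a)/h_t)$ with $h_t = t^{-1/(2\beta+1)}$, the same explicit likelihood-ratio formula for the jump rate, and the same control of the expected occupation time of $B_{h_t}(a)$ via Theorems \ref{theo:harrisok} and \ref{theo:invmeasure} to conclude that the divergence is $O(t\,h_t^{2\beta+1}) = O(1)$. The only difference is cosmetic: you package the final reduction via Le Cam/Pinsker and the KL divergence, while the paper bounds $\E_0[|\log L_t^{f_t/f_0}|]$ and applies a Markov-inequality argument to $\P_0(L_t \geq e^{-s})$; and there is a harmless slip in the direction of the KL integrand (it should read $f_0\log(f_0/f_t) - f_0 + f_t$ rather than $f_t\log(f_t/f_0) - f_t + f_0$, but both are $\asymp (f_t-f_0)^2/f_0$ so the bound is unaffected).
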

The proofs of Theorems \ref{theo:main} and \ref{theo:lowerbound} are  given in Sections \ref{sec:proofmain} and \ref{sec:optimal}.

\subsection{Simulation results}
In this subsection, we present some results on simulations, for different jump rates $f.$ The other parameters are fixed: $N=100, \; \lambda=1, \; K=2$ and $m=1.$ The dynamics of the system are the same when $\lambda$ and $f$ have the same ratio. In other words, variations of $\lambda$ and $f$ keeping the same ratio between the two parameters lead to the same law for the process rescaled in time. This is why we fix $\lambda=1$ and propose different choices for $f.$
The kernel $Q$ used here is a truncated Gaussian kernel with standard deviation 1. 
%

We present for each choice of a jump rate function $f$ the associated estimated function $\hat f$ and the observed distribution of $X$ or more precisely of $\bar X=\frac{1}{N} \sum_{i=1}^N X^i.$ Figures 2, 3 and 4 correspond respectively  to the following definitions of $f: \; f(x)=x, \; f(x)= \log(x+1)$ and $f(x)= \exp(x)-1.$

For Figures \ref{Fig. 2}, \ref{Fig. 3} and \ref{Fig. 4}, we fixed the length of the time interval for observations respectively to $t=200,$ $300$ and $150.$ This allows us to obtain a similar number of jump for each simulation, respectively equal to $17324,$ $18579$ and $21214.$ These simulations are realized with the software R.

The optimal bandwidth $h_t=t^{-\frac{1}{2 \beta +1}}$ depends on the regularity of $f$ given by the parameter $\beta.$ Therefore, we propose a data-driven bandwidth chosen according to a Cross-validation procedure. For that sake, we define the sequence $\left( Z_k \right)_{k \in \N^*}$ by $Z_k^i=X_{T_k^i-}^i$ for all $1 \leq i \leq N.$ For each $a \in [0,K]$ and each sample $Z=(Z_1,...,Z_n),$ for $1\leq \ell\leq n$ we define the random variable $\hat \pi_1^{\ell, n,h} (a)$ by
$$
\hat \pi_1^{\ell, n,h} (a) = \frac{1}{(n-\ell)N} \sum_{k=\ell +1}^n \sum_{i=1}^N Q_h(Z_k^i-a).
$$
$\hat \pi_1^{\ell, n,h} (a)$ can be seen as an estimator of the invariant measure $\pi^Z_1$ of the discrete Markov chain.

We propose an adaptive estimation procedure at least for this simulation part. We use a Smoothed Cross-validation (SCV) to choose the bandwidth (see for example the paper of Hall, Marron and Park \cite{HaMa}), following ideas which were first published by Bowmann \cite{Bo} and Rudemo \cite{Ru}. As the bandwidth is mainly important for the estimation of the invariant probability $\pi_1^{Z} $, we use a Cross validation procedure for this estimation.  More precisely, we use a first part of the trajectory to estimate $\widehat \pi_1^{\ell, n,h}$ and then another part of the trajectory to minimize the Cross validation $SCV(h)$ in $h.$ In order to be closer to the stationary regime, we chose the two parts of the trajectory far from the starting time. Moreover we chose two parts of the trajectory sufficiently distant from each other. This is why we consider $m_1, m_2$ and $\ell$ such that $1<<m_1 \leq m_2 << \ell \leq n.$

%
%

We use the method of the least squares Cross validation and minimize 
$$
SCV(h)=\int \left( \widehat \pi_1^{\ell , n,h} (x) \right)^2 dx-\frac{2}{N(m_2-m_1)} \sum_{k=m_1+1}^{m_2} \sum_{i=1}^N \widehat \pi_1^{\ell, n,h} (Z_k^i)
$$
(where we have approximated the integral term by a Riemann approximation), giving rise to a minimizer $\hat h .$ We then calculate the estimator $\hat f$ the long of the trajectory. In the next figure, we use this method to find the reconstructed $ f $ with an adaptive choice of $h.$

\begin{figure}[!h]
\includegraphics[scale=0.6]{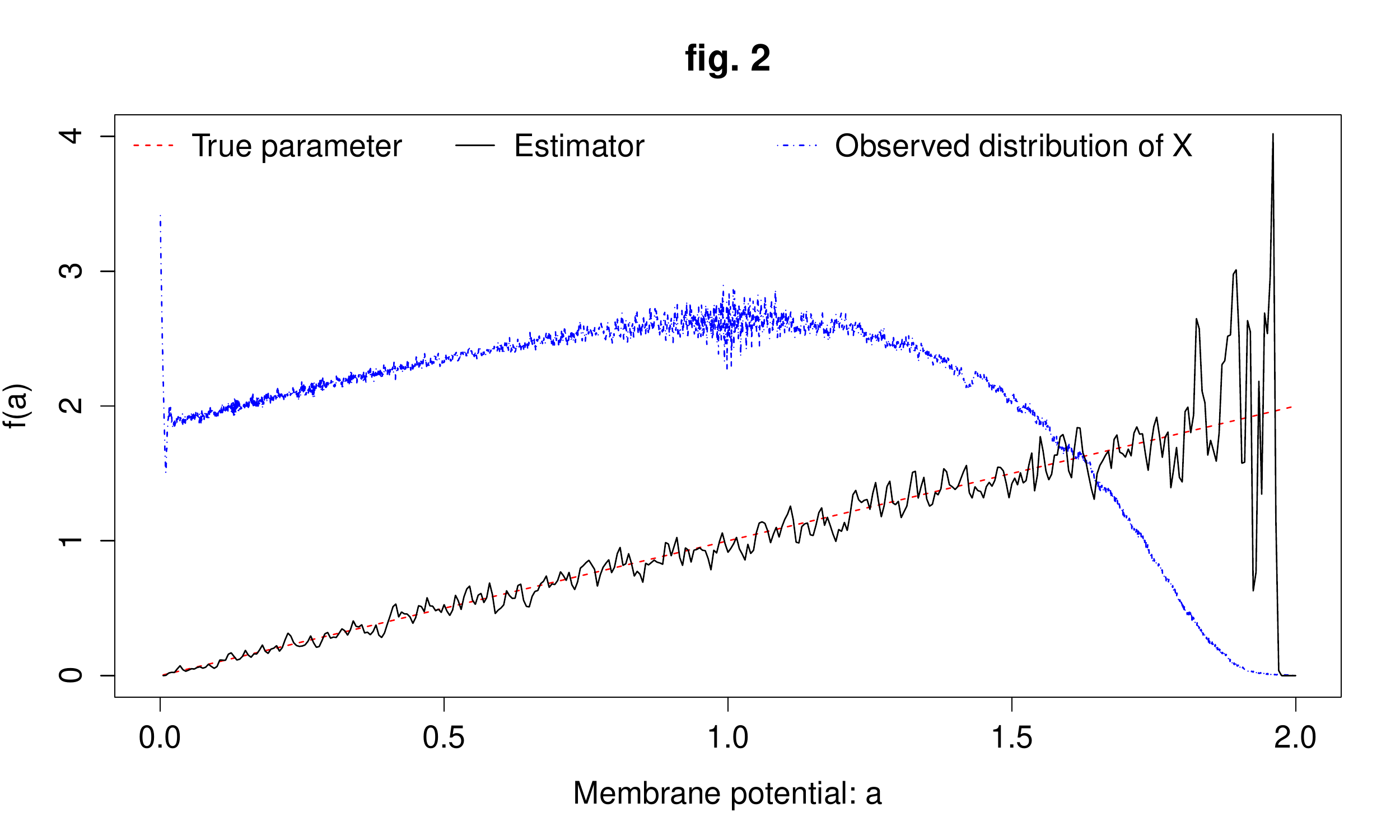} 
\caption{Estimation of the intensity function $f(x)=x$}
\label{Fig. 2}
\end{figure}


\begin{figure}[!t]
\includegraphics[scale=0.56]{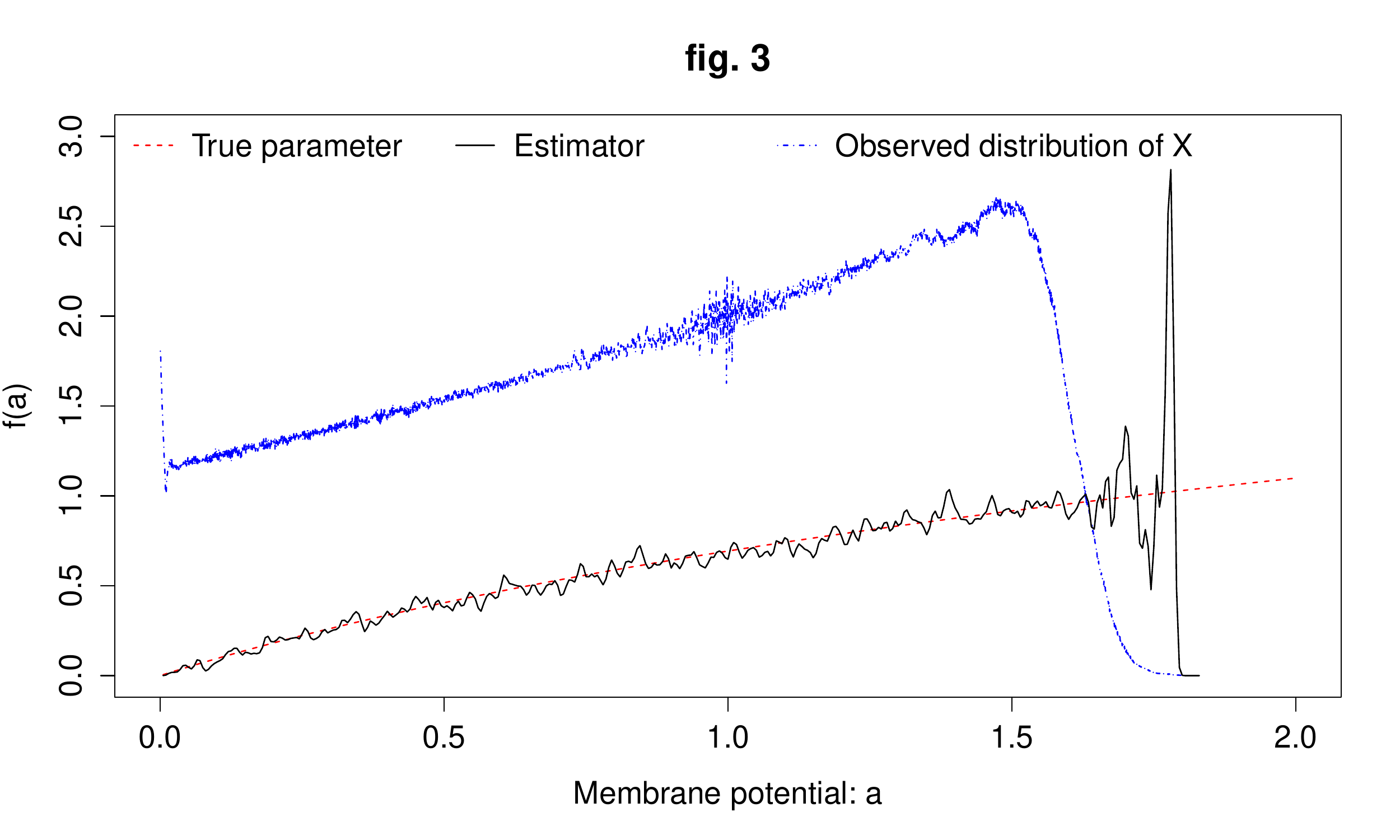}  
\caption{Estimation of the intensity function $f(x)=\log (x+1)$}
\label{Fig. 3}
\end{figure}

\begin{figure}[!h]
\includegraphics[scale=0.56]{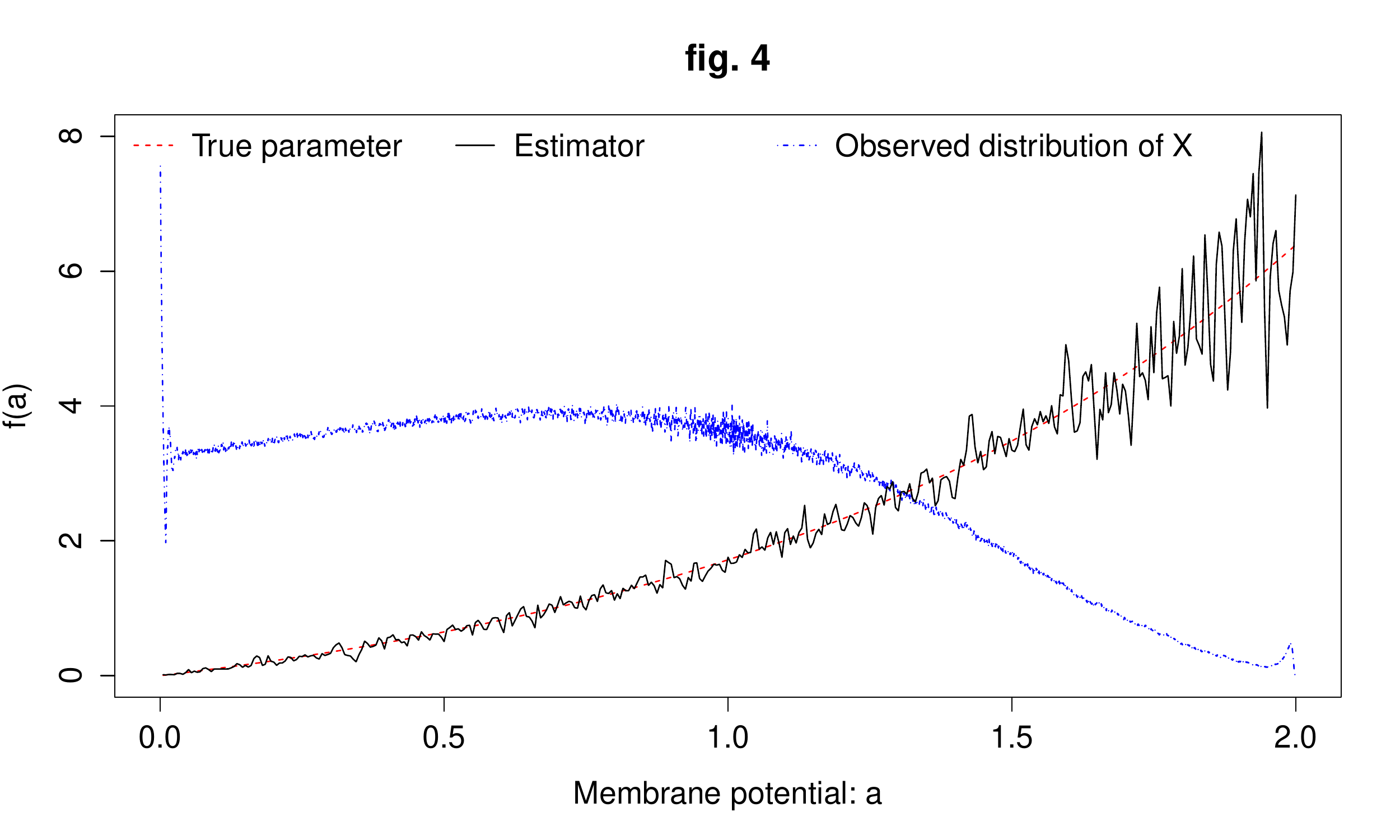} 
\caption{Estimation of the intensity function $f(x)=\exp (x) - 1$}
\label{Fig. 4}
\end{figure}

As expected, we can see that the less observations we have, the worse is our estimator. Note that close to $0$ the observed density of $X$ explodes. This was also expectable due to the reset to $0$ of the jumping neurons. Moreover, the simulations show a lack of regularity of the observed density close to $m,$ which is consistent with our results, but this does not seem to affect the quality of the estimator.

\section{Harris recurrence of $X$ and speed of convergence to equilibrium -- Proof of Theorem \ref{theo:harrisok}}\label{sec:Harris}
In this section, we give the proof of Theorem \ref{theo:harrisok} and show that the process $ (X_t)_{t \geq 0} $ is positive recurrent in the sense of Harris. We follow a classical approach and prove the existence of regeneration times. This is done in the next subsection and follows ideas given in Duarte and Ost \cite{bresiliens}.

\subsection{Regeneration}
The main idea of proving a regeneration property of the process is to find some uniform ``noise'' for the whole process on some ``good subsets'' of the state space. Since the transition kernel associated to the jumps of our process is not creating any density (and actually destroys it for the spiking neurons which are reset to $0$), the only source of noise is given by the random times of spiking. These random times are then transported through the 
deterministic flow $ \gamma_{s,t} (  v ) = ( \gamma_{s, t } ( v^1 ) , \ldots , \gamma_{s, t } ( v^N) ),$ which is given for any starting configuration $v \in [0, K]^N$ by
\begin{equation}\label{eq:flow}
\gamma_{s,t} (  v^i) = e^{ -\lambda (t-s) }  v^i + ( 1 - e^{ - \lambda (t-s) } ) m  , \; 0 \le s \le t , \; \gamma_t ( v^i ) := \gamma_{0, t } ( v^i ) .
\end{equation}

The key idea of what follows -- which is entirely borrowed from \cite{bresiliens} -- is the following. 

Write $I_n, n \geq 1, $ for the sequence giving the index of the spiking neuron at time $T_n,$ \textit{i.e.} $I_n = i $ if and only if $ T_n = T_k^i $ for some $ k \geq 1 .$ It is clear that in order to produce an absolute continuous law with respect to the Lebesgue measure on $ [0, K ]^N, $ we need at least $N$ jumps of the process. On any event of the type $ \{ T_1 = t_1, I_1 = i_1 ,  \ldots , T_N = t_N , t_N < t <  T_{N+1} , I_N = i_N \} ,$ it is possible to write the position of the process at time $t $ as a concatenation of the deterministic flows given by 
\begin{equation}\label{eq:concatenation}
\Gamma_{ (t_1, \ldots, t_N, i_1, \ldots , i_N )} ( t, v) = \gamma_{t_{N}, t}  ( \Delta_{i_N} ( \gamma_{t_{N-1} , t_N} ( \Delta_{i_{N-1} } ( \ldots   \Delta_{i_1} ( \gamma_{0, t_1} ( v) ) ) ) ) ) .
\end{equation}
Proving absolute continuity amounts to prove that the determinant of the Jacobian of the map $ (t_1, \ldots , t_N ) \to \Gamma_{ (t_1, \ldots, t_N, i_1, \ldots , i_N )} ( t, v) $ does not vanish. For general sequences of $ (i_1, \ldots , i_N) ,$ this will not be true (think \textit{e.g.}\ of the sequence $ (i_1 = \ldots = i_N = 1)$). 

The main idea is however to consider the sequence $ i_1 = 1, i_2 = 2, \ldots , i_N = N $ and to use the {\it regeneration property of spiking}, \textit{i.e.}\ the fact that the neuron $k$ spiking at time $t_k$ is reset to zero at time $t_k.$ In this case, for all later times, its position does not depend on $ t_1, \ldots , t_{k-1} $ any more. In other words, the Jacobian of  $\Gamma_{ (t_1, \ldots, t_N, 1, \ldots , N )} ( t, v)  $ is a diagonal matrix, and all we have to do is to control that all diagonal elements do not vanish. The second idea is to linearize the flow, \textit{i.e.}\ to consider the flow during very short time durations, and to use that, just after spiking, each diagonal element is basically of the form 
$$ \frac{\partial \gamma_{s, t } (0) } { \partial s } \sim - \lambda m  , \; \mbox{ as $t -s \to 0.$}$$ 
The important fact here is that the absolute value of the drift term of the deterministic flow of one neuron is strictly positive when starting from the initial value $0.$

In the following, this idea is made rigorous. Our proof follows the approach given  in Section 4 of \cite{bresiliens}. 
We fix $ \ge > 0 $ and put
$$ A_\ge = \{ i \ge - \ge  /  4  < T_i < i \ge , i = 1 , \ldots, N \} $$
and
$$ S = \{ I_1 =1, I_2 = 2 , \ldots , I_N = N  \}$$
which is the event that all $N$ neurons have spiked in the fixed order given by their numbers, \textit{i.e.}\ neuron $1$ spikes first, then neuron $2,$ then $3,$ and so on. We introduce 
$$ u^* = \left( \frac{N-1}{N}, \frac{N-2}{N}, \ldots, \frac1N , 0 \right) $$
which would be the position of neurons after $N$ spikes and on the event $S,$ if  $\lambda = 0 $ (here, we suppose w.l.o.g. that $ K > 1 + \frac1N $). 

Now we fix any initial configuration $v \in [0, K]^N $ and introduce the sequence of configurations
$ v (k) , 0 \le k \le N ,$ given by $v(0) = v,$ $v_k(k) = 0 $ and 
\begin{equation}\label{eq:vk}
   v_i(k) = \left\{
\begin{array}{ll}
\frac{k-i}{N} ,& i < k \\
\underbrace{a_K \circ \ldots \circ a_K}_{k \; \mbox{{\small times}}} ( v_i) , & i > k 
\end{array}
\right\} .
\end{equation}
Notice that $ \underbrace{a_K \circ \ldots \circ a_K}_{k \; \mbox{{\small times}}} ( v_i)  = v_i +  \frac{k}{N} $ if $ v_i < K - \frac{2+k}{N} .$ Notice also that  $v(N) = u^* .$ 

We cite the following lemma from \cite{bresiliens}.
\begin{lem}[Lemma 4.1 of \cite{bresiliens}]\label{lem:positionbresiliens}
If $ X_0 = u  \in B_\delta ( v) , $ then on the event $ A_\ge \cap S , $ we have for all $ 1 \le k \le N , $ \\
(i) $X_i ( T_k) = v_i ( k) + \sum_{ r= i+1}^k \lambda (T_r - T_{r-1} ) d_i (r-1)  + R_{  \delta \ge} ( T_1^k , u )  + R_{ \ge^2 }( T_1^k , u ) , $ if $i < k ,$ \\

(ii) $X_i ( T_k) = v_i ( k) + \sum_{ r= i}^k \lambda (T_r - T_{r-1} ) d_i  (r-1) +R_{\delta } ( u)  +  R_{\delta \ge} ( T_1^k , u )  + R_{\ge^2 }(T_1^k , u ) , $ if $i > k ,$ \\

(iii) $ \bar X^N ( T_k ) = \bar v (k) + R_{\delta } (u)  + R_{ \ge }( T_1^k , u)  , $ if $ k < N ,$ and $  \bar X^N ( T_N ) = \bar u^* + R_{\ge }( T_1^N , u )  .$

Here, $ d_i ( r) = m - v_i ( r)  $ and $ T_1^k = (T_1, \ldots, T_k).$ Moreover, the remainder functions are of order 
$$  R_{  \delta \ge} ( T_1^k , u ) = O ( \delta \ge) ,  R_{ \ge^2 }( T_1^k , u )  = O ( \ge^2 ) , R_{\delta } ( u) = O ( \delta ) ,\ldots ,$$ 
and all partial derivatives are of order either $\delta $ or $\ge ,$ uniformly in $ v .$ 
\end{lem}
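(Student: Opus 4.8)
The plan is to prove the three assertions simultaneously by induction on $k=1,\dots,N$, by unfolding the flow--jump concatenation \eqref{eq:concatenation} evaluated at the realised jump times and linearising the deterministic flow \eqref{eq:flow} in the inter--spike durations, which are small on $A_\ge$.

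Two elementary preliminaries come first. Setting $T_0:=0$, on $A_\ge$ one has $T_r-T_{r-1}=O(\ge)$ uniformly for $1\le r\le N$; and from $e^{-\lambda s}=1-\lambda s+O(s^2)$ the flow obeys
$$ \gamma_{s,t}(x)=x+\lambda(t-s)(m-x)+O\big((t-s)^2\big), $$
so every application of a flow over a duration $O(\ge)$ contributes the linear term $\lambda(t-s)(m-x)$ together with an $O(\ge^2)$ error, and, $N$ being fixed, only finitely many such errors (and finitely many products of them) ever occur, all of order $\ge^2$. The third preliminary is that, under the standing assumption $K>1+\tfrac1N$, the coordinates of each configuration $v(k)$ from \eqref{eq:vk} lie in $[0,\tfrac{N-1}{N}]\subset[0,K-\tfrac2N)$; hence a potential lying within $O(\delta+\ge)$ of some $v_i(r)$ receives exactly the increment $\tfrac1N$ at a spike of another neuron (because $a_K\equiv\tfrac1N$ there), and the only way the boundary nonlinearity of $a_K$ can be activated is through the $O(\delta)$ spread of the initial datum, in which case it is absorbed, by smoothness of $a_K$, into the $R_\delta$-- and $R_{\delta\ge}$--type remainders.

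With these in hand the induction step is transparent. Passing from $T_{k-1}$ to $T_k$ means applying $\gamma_{T_{k-1},T_k}$ and then the jump map $\Delta_k$. A neuron $i<k$ has already spiked, at $T_i$, and was reset to $0$ there; its trajectory after $T_i$ is therefore a function of $T_i,\dots,T_k$ and of the (already $u$--free) coordinates feeding $a_K$ only, so the perturbation of the initial condition re--enters at most through one extra flow segment of length $O(\ge)$, i.e.\ at order $\delta\ge$. Iterating the linearised flow together with the exact $\tfrac1N$ increments yields $v_i(k)+\sum_{r}\lambda(T_r-T_{r-1})d_i(r-1)$ with $d_i(r)=m-v_i(r)$, up to remainders of order $\ge^2$ and $\delta\ge$, which is (i); a neuron $i>k$ has not yet spiked, so its position still depends on $u_i$ through the first flow segment $\gamma_{0,T_1}$, which is precisely the genuine $R_\delta(u)=O(\delta)$ contribution, and the same linearisation gives (ii). Summing the coordinate expansions and dividing by $N$ gives (iii), with the extra observation that at $k=N$ every coordinate has been reset, so none depends on $u$ and the $R_\delta$ term disappears, leaving only the $O(\ge)$ remainder (for the averaged statement it is harmless to absorb the first--order--in--$\ge$ terms into $R_\ge$). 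Finally, the derivative bounds follow by differentiating these explicit closed forms in $T_1,\dots,T_k$: $\partial/\partial T_r$ either brings a factor $\lambda$ down from an exponential $e^{-\lambda(T_r-T_{r-1})}$, leaving a magnitude that is $O(\ge)$ for an already--reset coordinate and $O(\delta)$ for the one carrying $u_i$, or hits a term already linear in the times and returns a bounded constant times something $O(\ge)$; hence every partial derivative of every remainder is $O(\delta)$ or $O(\ge)$, uniformly in $v$.

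I expect the main obstacle to be the bookkeeping rather than any single estimate: one must propagate uniform (in $v$) control of all remainders \emph{and} of their partial derivatives through all $N$ induction steps at once, keeping the two error scales $\delta$ and $\ge$ cleanly separated --- in particular checking that the $\delta$--dependence is genuinely erased by the reset for every already--spiked coordinate --- and handling the boundary behaviour of $a_K$ so that it produces no spurious terms beyond those already allowed.
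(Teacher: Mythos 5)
The paper does not prove this lemma itself: it is quoted as Lemma 4.1 of Duarte and Ost \cite{bresiliens}, and the only thing the paper adds is the remark immediately after, noting that the drift towards a fixed level $m$ (instead of the empirical mean used in \cite{bresiliens}) changes the definition of $d_i(r)$. There is therefore no in-paper proof to compare against; the lemma is imported as a black box. Your induction-on-$k$, linearise-the-flow strategy is the natural one and presumably matches the original argument in spirit.

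That said, your third preliminary contains a concrete error. The coordinates $v_i(k)$ lie in $[0,\tfrac{N-1}{N}]\subset[0,K-\tfrac2N)$ only for the already-spiked indices $i\le k$, where $v_i(k)=\tfrac{k-i}{N}$. For $i>k$, $v_i(k)$ is $v_i$ pushed through $k$ applications of $x\mapsto x+a_K(x)$; if $v_i$ is close to $K$, so is $v_i(k)$, well inside the boundary region $[K-\tfrac2N,K]$ where $a_K$ is not constant. Your subsequent claim --- that the nonlinearity of $a_K$ is only excited by the $O(\delta)$ initial spread and hence absorbed into $R_\delta$- and $R_{\delta\ge}$-type remainders --- therefore does not follow. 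For a not-yet-spiked coordinate with $v_i(r-1)\ge K-\tfrac2N$, the jump map $x\mapsto x+a_K(x)$ acts nonlinearly on a deviation whose leading part is the drift increment $\lambda(T_r-T_{r-1})\,d_i(r-1)=O(\ge)$, and multiplies it by $1+a_K'\neq 1$ at each such jump; this perturbs the leading $O(\ge)$ coefficients, not merely the higher-order terms, and your outline does not account for it. In the applications made in the paper (equation \eqref{eq:14} with $k=N$, and the Jacobian in the following corollary) only the already-spiked coordinates matter and every $v_i(N)=u_i^*<K-\tfrac2N$, so the issue does not bite there; but if you want to prove item (ii) for general $v$ you must redo the error recursion carrying the factors $1+a_K'(v_i(r))$ explicitly rather than assume $a_K$ is locally constant along the nominal trajectory.
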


\begin{rem}
Our model is slightly different from the model in \cite{bresiliens}: instead of an attraction to the empirical mean of the system, we have an attraction to a fixed equilibrium value $m.$ This leads to our definition of $ d_i ( r) $ which is slightly different from the one used in \cite{bresiliens}. 
\end{rem}

\begin{cor}[\cite{bresiliens}, Corollary 3]
Put $ t^* = N \ge .$ Then we have on $ A_\ge \cap S, $ 
\begin{equation}\label{eq:14}
X_i ( t^*) = u_i^* + \lambda (t^* - T_N) d_i^* + \sum_{ r= i+1}^N \lambda (T_r - T_{r-1} ) d_i (r-1)  + R_{  \delta \ge} ( T_1^N , u )  + R_{ \ge^2 }( T_1^N , u ) ,
\end{equation} 
where $ d_i^* = m - u^*_i .$ 
\end{cor}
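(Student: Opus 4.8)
I would obtain this as an immediate corollary of part (i) of Lemma \ref{lem:positionbresiliens} applied with $k=N$, by transporting the configuration $X(T_N)$ forward over the short extra time window $[T_N,t^*]$ along the deterministic flow. The plan has three short steps, plus a remark on the derivative estimates.

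First, on $A_\ge$ one has $T_N\in(N\ge-\ge/4,N\ge)$, hence $0<t^*-T_N<\ge/4$, so the time elapsed between the $N$-th spike and $t^*=N\ge$ is $O(\ge)$; since $t^*=N\ge$ lies, on the event considered, strictly between $T_N$ and $T_{N+1}$ (note $T_N<N\ge$ on $A_\ge$), the position at time $t^*$ is $X(T_N)$ transported coordinatewise by the flow $\gamma_{T_N,t^*}$, cf. \eqref{eq:flow} and \eqref{eq:concatenation}. Thus, for each $i$,
$$
X_i(t^*)=\gamma_{T_N,t^*}\bigl(X_i(T_N)\bigr)=e^{-\lambda(t^*-T_N)}X_i(T_N)+\bigl(1-e^{-\lambda(t^*-T_N)}\bigr)m .
$$
Second, I would Taylor-expand $e^{-\lambda(t^*-T_N)}=1-\lambda(t^*-T_N)+R_{\ge^2}$, which is legitimate because $t^*-T_N=O(\ge)$ on $A_\ge$, obtaining
$$
X_i(t^*)=X_i(T_N)+\lambda(t^*-T_N)\bigl(m-X_i(T_N)\bigr)+R_{\ge^2}(T_1^N,u).
$$
Third, I would insert the expansion of $X_i(T_N)$ furnished by Lemma \ref{lem:positionbresiliens}(i) with $k=N$, using the identity $v_i(N)=u_i^*$ from \eqref{eq:vk}, so that $X_i(T_N)=u_i^*+\sum_{r=i+1}^N\lambda(T_r-T_{r-1})d_i(r-1)+R_{\delta\ge}(T_1^N,u)+R_{\ge^2}(T_1^N,u)$. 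Then $m-X_i(T_N)=d_i^*+O(\ge)+O(\delta)$ with $d_i^*=m-u_i^*$, and multiplying by the factor $\lambda(t^*-T_N)=O(\ge)$ turns the $O(\ge)$ and $O(\delta)$ parts into remainders of order $\ge^2$ and $\delta\ge$, respectively. Collecting the terms yields exactly \eqref{eq:14}. The coordinate $i=N$ is trivial and handled the same way: there $X_N(T_N)=0=u_N^*$, the sum over $r$ is empty, and $\gamma_{T_N,t^*}(0)=(1-e^{-\lambda(t^*-T_N)})m=\lambda(t^*-T_N)m+R_{\ge^2}$.

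For the claim on the orders of the partial derivatives, I would note that $\gamma_{s,t}$ is smooth with all derivatives bounded uniformly for $|t-s|$ bounded, and that $X_i(t^*)$ is a finite composition of such flows with the smooth reset maps $\Delta_j$; differentiating the identities above in $T_1,\dots,T_N$ and invoking the corresponding derivative bounds already recorded in Lemma \ref{lem:positionbresiliens} shows that each partial derivative of the remainder terms is $O(\delta)$ or $O(\ge)$, uniformly in $v$. I do not expect a genuine obstacle here: the lemma does the substantive work, and the present statement only carries the expansion forward by one short flow step. The sole delicate point is the routine bookkeeping of the remainders and their derivatives through this last step; for the detailed estimates I would simply refer to Corollary 3 of \cite{bresiliens}.
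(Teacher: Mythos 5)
Your derivation is correct and is, as far as one can tell, exactly the argument implicit in the reference: the paper does not prove this corollary itself but quotes it from \cite{bresiliens}, and the only sensible route from Lemma~\ref{lem:positionbresiliens} is the one you take — apply part (i) with $k=N$ (noting $v(N)=u^*$), flow the result forward by $\gamma_{T_N,t^*}$ over a time window of length $O(\ge)$, linearize $e^{-\lambda(t^*-T_N)}$, and absorb the cross terms $O(\ge)\cdot O(\ge)$ and $O(\ge)\cdot O(\delta)$ into $R_{\ge^2}$ and $R_{\delta\ge}$ respectively. The bookkeeping in your third step is right: the only terms in $m-X_i(T_N)$ beyond $d_i^*$ are the $O(\ge)$ sum and the $R$-remainders, and all are multiplied by $\lambda(t^*-T_N)=O(\ge)$, so they indeed land in the allowed remainder classes. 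The remark about derivatives is also sound; composing with one additional smooth flow preserves the stated orders.

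One point deserves a flag, though it is inherited from the statement itself rather than introduced by you: you write $X_i(t^*)=\gamma_{T_N,t^*}(X_i(T_N))$, which requires no further jump in $(T_N,t^*)$, i.e.\ $T_{N+1}\geq t^*$. As $A_\ge$ is written here, this is not guaranteed (only $T_N<N\ge$ is); in the original reference $A_\ge$ carries this extra constraint, and the present paper has elided it. Worth noting when reading the corollary, but it is not a defect of your proof.
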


We put as in \cite{bresiliens} $\gamma^0 (t_1^N ) = ( \gamma^0_1 ( t_1^N) , \ldots , \gamma^0_N ( t_1^N) ) ,$ where 
$$ \gamma^0_i ( t_1^N ) := u_i^* + \lambda (t^* - t_N) d_i^* + \sum_{ r= i+1}^N \lambda (t_r - t_{r-1} ) d_i (r-1) , 1 \le i \le N.$$
Hence $\gamma^0_i (t_1^N) $ models how the $N$ successive jump times $ t_1 < t_2 < \ldots < t_N$ are mapped, through the deterministic flow, into a final position at time $t^* -$ on the event $\{ T_1 = t_1, \ldots , T_N = t_N \} \cap  A_\ge  \cap S.$ In order to control how the law of the $ N$ successive jump times $ t_1 , \ldots , t_N $ is transported through this flow, we calculate the partial derivatives of $ \gamma^0 $ with respect to $ t_i, 1 \le i \le N.$ One sees immediately that 
$$ \frac{\partial \gamma^0_i}{\partial t_k} = 0 , k < i , \quad \frac{\partial \gamma^0_i}{\partial t_i} = - \lambda m , 1 \le i \le N ,$$
whence

\begin{cor}[Corollary 4 of \cite{bresiliens}]
For each $ u \in B_\delta ( v) , $ the determinant of the Jacobian of the map $ \{ i \ge - \ge  /  4  < t_i < i \ge , i = 1 , \ldots, N \}   \ni t_1^N \mapsto \gamma^0 (t_1^N) +  R_{  \delta \ge} ( t_1^N , u )  + R_{ \ge^2 }( t_1^N , u )  $ is given by 
$$ \lambda^N m^N  + R_\ge (t_1^N , u) + R_\delta ( t_1^N , u) $$
which is different from zero for $ \ge $ and $\delta $ small enough, for all $ u \in B_\delta ( v) .$ 
\end{cor}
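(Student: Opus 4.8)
The plan is to deduce the statement directly from the explicit affine form of $\gamma^0$ obtained above, combined with the remainder estimates of Lemma \ref{lem:positionbresiliens}; no new idea is needed beyond a continuity argument for the determinant.

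First I would write down the Jacobian matrix of $\gamma^0$. Since
$$ \gamma^0_i ( t_1^N) = u_i^* + \lambda (t^* - t_N) d_i^* + \sum_{ r= i+1}^N \lambda (t_r - t_{r-1} ) d_i (r-1) $$
is affine in $(t_i, t_{i+1}, \ldots, t_N)$ and does \emph{not} involve $t_1, \ldots, t_{i-1}$ -- this non-dependence being precisely the regeneration-by-spiking effect, neuron $i$ having been reset to $0$ at time $t_i$ -- the matrix $\left( \partial \gamma^0_i / \partial t_k \right)_{1 \le i,k \le N}$ is upper triangular. On the diagonal one has $\partial \gamma^0_i / \partial t_i = -\lambda d_i(i) = -\lambda ( m - v_i(i) ) = - \lambda m$, using $v_i(i) = 0$ from \eqref{eq:vk}. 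Hence $\det \left( \partial \gamma^0_i / \partial t_k \right) = (-\lambda m)^N$, which up to the sign $(-1)^N$ is the announced leading term $\lambda^N m^N$ (and in particular is nonzero since $\lambda > 0$ and $0 < m < K$).

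Next I would bound the contribution of the perturbation. Writing $\Psi (t_1^N) := \gamma^0 (t_1^N) + R_{\delta \ge}(t_1^N, u) + R_{\ge^2}(t_1^N, u)$, the Jacobian of $\Psi$ is $\left( \partial \gamma^0_i / \partial t_k \right) + E(t_1^N, u)$, where the entries of $E$ are the partial derivatives in $t_1, \ldots, t_N$ of the two remainder functions. By the final assertion of Lemma \ref{lem:positionbresiliens} these derivatives are of order $\delta$ or $\ge$, uniformly in $v$ and in $u \in B_\delta(v)$; consequently all entries of $E$ are $O(\delta) + O(\ge)$, uniformly in $t_1^N$ in the box $\{ i \ge - \ge/4 < t_i < i \ge , \, i = 1, \ldots, N\}$ and in $u \in B_\delta (v)$.

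Finally I would conclude by multilinearity of the determinant: expanding $\det\big( (\partial \gamma^0_i / \partial t_k) + E \big)$ column by column, the unique term with no column taken from $E$ equals $(-\lambda m)^N$, while every other term contains at least one column of $E$ and is therefore bounded in absolute value by a constant -- depending only on $N$, $\lambda$, $m$ and the uniform remainder bounds -- times $\max_{i,k} |E_{ik}|$. This gives
$$ \det \mathrm{Jac}\, \Psi (t_1^N) = (-\lambda m)^N + R_\ge (t_1^N, u) + R_\delta (t_1^N, u), $$
with remainders uniform in $t_1^N$ and in $u \in B_\delta(v)$. Choosing $\ge$ and then $\delta$ small enough keeps the right-hand side bounded away from $0$ uniformly, which is the assertion. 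The only step requiring any care -- and the closest thing to an obstacle -- is the bookkeeping of the orders of the derivatives of $R_{\delta \ge}$ and $R_{\ge^2}$ together with the uniformity of these bounds in $u$; the substantive work has already been carried out in Lemma \ref{lem:positionbresiliens} and the preceding corollaries.
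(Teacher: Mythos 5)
Your proof is correct and matches the paper's intended reasoning: the key fact is the upper-triangular structure of the Jacobian of $\gamma^0$ with constant diagonal $-\lambda m$, which the paper computes explicitly just before the corollary, and the remainder estimates of Lemma~\ref{lem:positionbresiliens} control the perturbation via multilinearity of the determinant. One small notational slip: for $i = N$ the diagonal entry arises from the term $\lambda(t^*-t_N)d_N^*$ and equals $-\lambda d_N^*$ rather than $-\lambda d_N(N)$, but these coincide numerically since $d_N^* = m - u_N^* = m$.
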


As in Proposition 4.1 of \cite{bresiliens}, we now have two important conclusions from the above discussion. 

\begin{prop}
There exists $ \delta^* > 0$ and $\ge > 0,$ such that  for $ t^* = N \ge ,$ 
\begin{equation}\label{eq:regen}
 P_{t^* } ( x, \cdot ) \geq \eta_1 1_{ B_{\delta^*} ( u^*) } ( x) \nu ,
\end{equation}
where $ \nu $ is a probability measure and $\eta_1 \in ]0, 1 [.$ 
\end{prop}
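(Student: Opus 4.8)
The plan is to derive the minorization condition \eqref{eq:regen} from the preceding corollaries by following the argument of Proposition 4.1 in \cite{bresiliens}. The starting point is the observation that, for any initial configuration $u$, we have the lower bound on the transition probability
\begin{equation*}
P_{t^*}(u, \cdot) \geq P_u\left( X_{t^*} \in \cdot \; ; \; A_\ge \cap S \cap \{ t^* < T_{N+1} \} \right),
\end{equation*}
so it suffices to produce a density bound, uniform over $u \in B_{\delta^*}(u^*)$, for the right-hand side. On the event $A_\ge \cap S \cap \{t^* < T_{N+1}\}$, the position $X_{t^*}$ is an explicit function of the $N$ successive jump times $t_1 < \dots < t_N$ (living in the box $\{ i\ge - \ge/4 < t_i < i \ge\}$) and of $u$, namely the perturbed flow map of the last corollary, $\Phi(t_1^N) := \gamma^0(t_1^N) + R_{\delta\ge}(t_1^N, u) + R_{\ge^2}(t_1^N, u)$, whose Jacobian determinant equals $\lambda^N m^N + R_\ge + R_\delta$ and hence is bounded away from zero (and from infinity) for $\ge, \delta$ small. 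The randomness of the jump times has, conditionally on the ordering event, a density bounded below on the box: this is because, on the event $A_\ge$, at most one jump occurs per window of length $\ge$, each spiking neuron has spiking intensity at least $f_{min}$ of its current potential (which is bounded away from $0$ on these good sets by Assumption \ref{ass:1}), the ``thinning/acceptance" probability $\bar f(X_{s-})/(N f(K))$ is uniformly bounded below, and the inter-jump waiting times have exponential-type densities. The first step, therefore, is to fix $\ge$ and $\delta$ small enough (as permitted by the corollaries) and write down explicitly the joint law of $(T_1, \dots, T_N)$ restricted to $A_\ge \cap S \cap \{t^* < T_{N+1}\}$, exhibiting a constant $c_0 > 0$ such that this law dominates $c_0 \, dt_1 \cdots dt_N$ on the box, uniformly in $u \in B_{\delta^*}(u^*)$ for $\delta^*$ sufficiently small relative to $\delta$.

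The second step is the change of variables. Since $\Phi$ is a $C^1$ diffeomorphism from the box onto its image with Jacobian determinant bounded below by some $J_0 > 0$ uniformly in $u \in B_{\delta^*}(u^*)$, the pushforward of $c_0 \, dt_1 \cdots dt_N$ under $\Phi$ has a Lebesgue density on $\Phi(\text{box})$ bounded below by $c_0 / \sup|\det D\Phi| \geq c_0 / J_1$ for a suitable finite $J_1$. Hence on the set $\Phi(\text{box})$ the law of $X_{t^*}$ restricted to the good event dominates $(c_0/J_1)$ times Lebesgue measure. It remains to observe that $\Phi(t_1^N)$ is close to $u^*$: taking $t_r - t_{r-1}$ of order at most $\ge$ and $t^* - t_N$ of order at most $\ge$, one reads off from \eqref{eq:14} that $\Phi(t_1^N) = u^* + O(\ge) + O(\delta)$, so for $\ge, \delta$ small the image $\Phi(\text{box})$ contains a fixed ball $B_{\delta^*}(u^*)$ around $u^*$ (shrinking $\delta^*$ once more if needed, but keeping it independent of $u$). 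Defining $\nu$ to be the normalized restriction of Lebesgue measure to $B_{\delta^*}(u^*)$ and setting $\eta_1 := (c_0/J_1) \cdot \mathrm{Leb}(B_{\delta^*}(u^*)) \wedge \tfrac12$, we obtain \eqref{eq:regen} for all $x \in B_{\delta^*}(u^*)$; and $\eta_1 \in ]0,1[$ by construction.

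I would anticipate that the main obstacle is making the two uniformities compatible: the remainder terms $R_{\delta\ge}, R_{\ge^2}, R_\delta$ and their partial derivatives must be controlled uniformly over $u$ in a ball around $u^*$ whose radius $\delta^*$ is itself one of the outputs of the argument, so some care is needed to fix the order of quantifiers — first choose $\ge$ and $\delta$ small enough that the Jacobian is nondegenerate and the remainders are small on the whole box for every $u \in B_\delta(v)$ with $v$ ranging over a neighborhood of $u^*$, then shrink $\delta^*$ to guarantee both that $B_{\delta^*}(u^*) \subset \Phi(\text{box})$ and that the lower bound on the jump-time density is uniform. The verification that the jump times have a density bounded below on $A_\ge \cap S$ is conceptually routine — it is the standard acceptance/rejection construction recalled after \eqref{eq:delta} combined with the positivity $f \geq f_{min} > 0$ away from $0$ — but it does require checking that along the concatenated flow \eqref{eq:concatenation} none of the neuron potentials visits a neighborhood of $0$ at a spiking time except immediately after its own reset, which on the event $S$ with the ordering $1, 2, \dots, N$ is exactly the content of Lemma \ref{lem:positionbresiliens}. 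I would also note the mild normalization $K > 1 + \tfrac1N$ used to define $u^*$, which is harmless. Everything else is a transcription of \cite{bresiliens} with the cosmetic change in $d_i(r)$ already flagged in the Remark.
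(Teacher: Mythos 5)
Your proposal reconstructs precisely the argument of Proposition 4.1 in Duarte--Ost \cite{bresiliens}, which is exactly what the paper invokes here (it offers no self-contained proof, only the citation plus the preceding lemma and corollaries, and the small adjustment to $d_i(r)$ you already flagged). The strategy --- restrict to $A_\ge \cap S \cap \{t^* < T_{N+1}\}$, show the joint jump-time law has a density bounded below on the box, push it forward through the nondegenerate map $\gamma^0 + R_{\delta\ge} + R_{\ge^2}$, and identify a Lebesgue ball around $u^*$ in the image --- matches the cited proof, and your handling of the order of quantifiers for $\ge, \delta, \delta^*$ is the right way to resolve the one genuine subtlety.
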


 The lower bound \eqref{eq:regen} is a local Doeblin condition, and its proof is given in Proposition 4.1 of \cite{bresiliens}. We call $B_{\delta^*} ( u^*)$ a regeneration set: if the process visits this regeneration set, then after a time $t^*$ there is a probability $\eta_1$ that the law of the process is independent from its initial position $x \in B_{\delta^*} ( u^*).$

To be able to make use of the local Doeblin condition, we have to be sure that the process actually does visit the regeneration set $ B_{\delta^*} ( u^*).$ This is granted by the following result.

\begin{prop}\label{prop:control}
There exist $ \ge > 0 $ and $ \eta_2 > 0$ such that 
$$\inf_{f \in H( \beta , F, L, f_{min} )} \; \inf_{ v \in [0,K]^N }  P_{t^* } ( v , B_{\delta^*} ( u^* ) ) \geq \eta_2 ,$$
 for $ t^* = N \ge .$ 

\end{prop}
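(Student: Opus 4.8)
The plan is to show that starting from \emph{any} configuration $v \in [0,K]^N$, the process reaches the regeneration set $B_{\delta^*}(u^*)$ within the fixed time $t^* = N\varepsilon$ with probability bounded below uniformly in $v$ and in $f \in H(\beta,F,L,f_{min})$. The strategy mirrors Lemma~\ref{lem:positionbresiliens} and its corollaries: force the event $A_\varepsilon \cap S$, i.e.\ force the $N$ neurons to spike exactly once each, in the order $1,2,\ldots,N$, with the $k$-th spike occurring in the narrow window $(k\varepsilon - \varepsilon/4, k\varepsilon)$. On this event, by the Corollary following Lemma~\ref{lem:positionbresiliens}, the position $X(t^*)$ is $u^*$ plus remainder terms of order $\varepsilon$ (the terms $\lambda(t^*-T_N)d_i^*$, $\sum \lambda(T_r-T_{r-1})d_i(r-1)$, $R_{\delta\varepsilon}$, $R_{\varepsilon^2}$ are all $O(\varepsilon)$ once we also send $\delta \to 0$, but here $\delta$ is replaced by the diameter of $[0,K]^N$, so I keep only the $\varepsilon$-dependence and use that the coefficients $d_i$ are bounded by $K$). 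Hence for $\varepsilon$ small enough, $X(t^*) \in B_{\delta^*}(u^*)$ deterministically on $A_\varepsilon \cap S$, regardless of the starting point $v$. So it remains to bound $P_v(A_\varepsilon \cap S)$ from below, uniformly.

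First I would set up the explicit construction of the process via the acceptance/rejection scheme described after \eqref{eq:delta}: candidate jump times come from a Poisson process $\bar N$ of rate $Nf(K)$, and a candidate at time $\bar T_n$ is accepted with probability $\bar f(X_{\bar T_n -})/(Nf(K))$, and when accepted, the spiking neuron is chosen proportionally to $f(X^i_{\bar T_n -})$. To realize $A_\varepsilon \cap S$ I would demand: exactly $N$ candidate times of $\bar N$ in $[0,t^*]$, the $k$-th of them lying in $(k\varepsilon - \varepsilon/4, k\varepsilon)$; each candidate accepted; and at the $k$-th accepted jump, neuron $k$ is the one that spikes. The probability of the first requirement (a prescribed configuration of Poisson points in disjoint intervals of length $\varepsilon/4$, with no points elsewhere in $[0,t^*]$) is bounded below by a positive constant depending only on $N$, $\varepsilon$ and $f(K)$ — and $f(K) \le F$ uniformly over the Hölder class, which keeps this bound uniform in $f$.

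The acceptance and neuron-selection probabilities are where Assumption~\ref{ass:1}.2 enters: I need $\bar f(x) > 0$ and, more to the point, $f(x^k) > 0$ for the particular neuron $k$ that I want to spike at step $k$. At step $k$, neuron $k$ has, along the constructed trajectory, been pushed up by $k-1$ earlier spikes (each adding essentially $1/N$, by the remark after \eqref{eq:vk} since we can arrange $v_k + (k-1)/N$ stays below $K - 2/N$ for the relevant indices — or simply use that its potential is bounded below by something strictly positive after the drift has acted for a time of order $\varepsilon$ starting from $v_k \ge 0$, but to be safe the cleanest route is: before any spike, after the drift has acted over a time interval of length at least $\varepsilon/4$ from a nonnegative start, the potential is at least $(1-e^{-\lambda\varepsilon/4})m > 0$). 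Thus $f(X^k_{\bar T_k -}) \ge f_{min}\big((1-e^{-\lambda\varepsilon/4})m\big) =: c_0 > 0$, uniformly in $f$, and the probability of accepting and selecting neuron $k$ is at least $c_0/(Nf(K)) \ge c_0/(NF) > 0$. Multiplying the $N$ such factors with the Poisson-configuration factor gives the desired $\eta_2$, uniform in $v$ and in $f \in H(\beta,F,L,f_{min})$.

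The main obstacle is the bookkeeping needed to guarantee that along the forced trajectory each neuron's potential, at the moment it is supposed to spike, stays in a region where we both control $f$ from below (for acceptance/selection) and control the deterministic flow well enough that the endpoint lands in $B_{\delta^*}(u^*)$; this is exactly the content of Lemma~\ref{lem:positionbresiliens}, so the real work is checking that the lemma's estimates apply with the role of $\delta$ now played by the (fixed, $O(1)$) diameter of $[0,K]^N$ rather than a small $\delta$, and that the $\varepsilon$-order remainders still dominate so that shrinking $\varepsilon$ suffices to enter the \emph{fixed} ball $B_{\delta^*}(u^*)$. If $\delta^*$ from \eqref{eq:regen} is too small relative to what $O(\varepsilon)$ remainders allow, one simply notes that \eqref{eq:regen} holds for any smaller $\delta^*$ as well (a local Doeblin condition on a smaller ball still suffices for the regeneration argument), so we may first fix $\varepsilon$ small, obtain $X(t^*) \in B_{\delta^*}(u^*)$ for a suitable $\delta^*$, and keep that $\delta^*$ throughout — consistent with the way the previous Proposition is phrased.
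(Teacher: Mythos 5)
Your proof is correct and takes essentially the same approach as the paper: you reduce to lower-bounding $P_v(A_\ge \cap S)$, on which the process lands in $B_{\delta^*}(u^*)$ from any start by \eqref{eq:14}, and then bound that probability below uniformly in $v$ and $f$ using $f_{min}\le f\le F$. The only cosmetic difference is that you route through the Poisson acceptance/rejection construction, whereas the paper writes the jump-time density directly as a product of hazard-rate and survival factors; after the $Nf(K)$ factors cancel in your thinning computation, the two give the same lower bound $\bigl(\tfrac{\ge}{4}\,f_{min}\bigl((1-e^{-c\lambda\ge})m\bigr)\,e^{-NFt^*}\bigr)^N>0$.
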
 

\begin{proof}
By (\ref{eq:14}), there exists $ \ge $ such that for all $  v  \in [0,K]^N, $ we have that $X(t^* ) \in B_{\delta^*  }( u^* )$ on $A_\ge \cap S ,$ when $ X( 0 ) =  v.$ Hence  
$$  P_{t^* } ( v , B_{\delta^*} ( u^* ) )  \geq  P_v({A_\ge \cap S} )  .$$

Recalling (\ref{eq:flow}), we then obtain 
\begin{multline*}
 P_v({A_\ge \cap S} )  = \int_{ \ge - \ge / 4    }^{\ge} f( \gamma_{0, t_1} ( v^1)  ) e^{ -  \int_0^{t_1}  \bar f( \gamma_{0,s} ( v)  )ds } dt_1 \\
 \int_{ 2 \ge  - \ge / 4   }^{2 \ge}f( \gamma_{ t_1, t_2} ( v (1)^2 )  ) e^{ - \int_{t_1}^{t_2}  \bar f( \gamma_{t_1,s}( v (1) )  )ds } dt_2 \ldots \\
\int_{ N \ge - \ge / 4   }^{N \ge}f( \gamma_{ t_{N-1}, t_N} ( v (N-1)^N )  ) e^{ -  \int_{t_{N-1}}^{t_N}  \bar f( \gamma_{t_{N-1},s} ( v (N-1)) )  )ds } dt_N ,
\end{multline*}
where $ \bar f ( x) = \sum_{i=1}^N f( x^i) , $ where the sequence $ v(1) , \ldots , v( N-1) $ is given as in (\ref{eq:vk}). Since by assumption $ v \in [0,K]^N ,$ it is immediate to see that $ \gamma_{s, t } ( v (k)^i ) \le  C, $ for a constant $C,$ for all $ 0 \le s \le t \le t^* , $ for all $ k \le N$ and for all $i \le N.$  Moreover, 
$$ \gamma_{0, t_1}^1 ( v^1)  \geq (1- e^{ - \lambda \frac34 \ge  } ) m > 0,  \mbox{ on } t_1 \geq \frac34  \ge , $$ 
and since $f$ is non decreasing, satisfying $ f (x) \geq f_{min}( x) $ and $ \| f \|_\infty \le F,$  this implies that 
$$ f( \gamma_{0, t_1} ( v^1)  ) e^{ -  \int_0^{t_1}  \bar f( \gamma_{0,s}( v)  )ds } \geq f( (1- e^{ - \lambda \frac34 \ge  } ) m) e^{ - N t_1 f( C ) } \geq f_{min} \left( (1- e^{ - \lambda \frac34 \ge  } ) m \right) e^{ - N t_1 F }, $$
on $ t_1 \geq  \frac34  \ge .$ Similar arguments show that all consecutive terms are strictly lower bounded uniformly in $f \in H( \beta , F, L, f_{min} ) $ as well. 
As a consequence, 
$$  P_v({A_\ge \cap S} )  \geq \left( \frac{\ge}{4} f_{min} \left( (1- e^{ - \frac34 \lambda \ge} )  m \right) e^{ -  t^*N  F }  \right)^N > 0,$$
which concludes the proof. 
\end{proof}

\begin{rem}\label{rq:regenunif}
In the proof of Proposition 4.1 of \cite{bresiliens}, the authors have no need to obtain (\ref{eq:regen}) uniformly in $f \in H( \beta , F, L, f_{min} ).$ However, it is easy to see that we can rewrite their proof using the bounds for $ f \in H( \beta , F, L, f_{min} ) $ appearing in the proof of Proposition \ref{prop:control} above. As a consequence, we obtain
\begin{equation}\label{eq:regenunif}
 \inf_{f \in H( \beta , F, L, f_{min} )} P_{t^* } ( x, \cdot ) \geq \eta_1 1_{ B_{\delta^*} ( u^*) } ( x) \nu ,
\end{equation}
for some $\eta_1 > 0.$
\end{rem}

Once we dispose of the uniform local Doeblin condition \eqref{eq:regenunif} and of the control given in Proposition \ref{prop:control}, it is classical, using regeneration arguments, to show that the process is recurrent in the sense of Harris. 

\subsection{Harris recurrence and invariant measure}


Using the regeneration procedure, we can prove that the process $X$ is positive Harris recurrent. We denote by $ \| \cdot \|_{TV} $ the total variation distance, \textit{i.e.}\ $\| \nu_1 - \nu_2 \|_{TV} = \sup_{ B \in {\mathcal B} ( [0, K ]^N ) } \left| \nu_1 ( B) - \nu_2 (B) \right| , $ for any two probability measures $\nu_1 , \nu_2 $ on $ ( [0, K ]^N , {\mathcal B} ( [0, K ]^N )) .$


We first show that the process is indeed Harris. For that sake, define the sequence of stopping times $(\tilde S_n )_{n \in \N}$ 
$$
\tilde S_1:= \inf \{ t  > 0 : X_t \in B_{\delta^*} ( u^*) \},
$$ 
and for all $n \geq 1,$ 
$$ 
\tilde S_{n+1}:= \inf \{ t > \tilde S_n + t^* : X_t \in B_{\delta^*} ( u^*) \}.
$$

Let $(U_n )_{n \in \N}$ be a sequence of \textit{i.i.d.}\ uniform random variables on $[0,1],$ which are independent of the process $X.$ 
Then, working conditionally on the realization of  $(U_n )_{n \in \N},$ we define the sequence $ ( S_n )_{ n \in \N }$ and the sequence $(R_n )_{n \in \N}$ of regeneration times as follows. 

$$
S_1:= \inf \{ \tilde S_n : U_n < \eta_1 \}, R_1 := S_1 + t^* ,
$$ 
and for all $n \geq 1,$ 
$$
S_{n+1}:= \inf \{ \tilde S_k > S_n : U_k < \eta_1 \}, R_{n+1} := S_{n+1} + t^* ,
$$
where $\eta_1 $ is given in \eqref{eq:regenunif}.
  
\begin{rem}\label{rem:iidregen}
(\ref{eq:regenunif}) allows us to construct the process $(X_t)_{t \geq 0} $ on a bigger probability space in such a way that for all $n, X_{R_n} \sim \nu$ and $\left(X_{R_n+t}\right)_{t \in \R^+}$ is independent from $ {\mathcal F}_{S_n-}. $  This construction is known as Nummelin splitting, we refer the interested reader to Chapter 6  of L\"ocherbach (2013) \cite{cours}. 
 \end{rem}

\begin{lem}\label{lem:FTregen}
For all $x \in [0, K]^N , E_x (R_1) < \infty$ and $E_x (R_2-R_1) < \infty.$
\end{lem}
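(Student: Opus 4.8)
The plan is to show that both $R_1$ and $R_2 - R_1$ are dominated by geometric sums of inter-visit times to the regeneration set, each of which has finite (in fact uniformly bounded) expectation. First I would bound $R_1 = S_1 + t^*$. Recall that $S_1 = \tilde S_{\tau}$ where $\tau = \inf\{n : U_n < \eta_1\}$, so $S_1 \le \tilde S_\tau$ and $R_1 \le \tilde S_\tau + t^*$. Since $\tau$ is geometric with parameter $\eta_1$ and independent of $X$, it suffices to control the growth of $\tilde S_n$. We have $\tilde S_{n+1} - \tilde S_n \le t^* + \sigma_n$, where $\sigma_n$ is the time, starting from $X_{\tilde S_n + t^*}$, to reach $B_{\delta^*}(u^*)$; by the strong Markov property $\sigma_n$ is stochastically dominated by $\sup_{y \in [0,K]^N} \sigma(y)$ where $\sigma(y)$ is the hitting time of $B_{\delta^*}(u^*)$ starting from $y$.

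The key estimate is therefore $\sup_{y}E_y(\sigma(y)) < \infty$, equivalently that the hitting time of the regeneration set has a uniformly bounded (indeed exponential) tail. This follows from Proposition \ref{prop:control}: for $t^* = N\varepsilon$ we have $\inf_{y \in [0,K]^N} P_{t^*}(y, B_{\delta^*}(u^*)) \ge \eta_2 > 0$. Hence, splitting time into blocks of length $t^*$ and applying the Markov property at the successive times $k t^*$, the probability of not having hit $B_{\delta^*}(u^*)$ within the first $k$ blocks is at most $(1-\eta_2)^k$; this gives $P_y(\sigma(y) > k t^*) \le (1-\eta_2)^k$ for every $y$, and consequently $\sup_y E_y(\sigma(y)) \le t^*/\eta_2 < \infty$. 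Combining, $E_x(\tilde S_{n+1} - \tilde S_n) \le t^* + t^*/\eta_2$ uniformly in $n$ and in the starting point, and also $E_x(\tilde S_1) \le t^*/\eta_2 + t^*$ is finite (or simply bounded by $\sup_y E_y(\sigma(y)) + $ nothing, starting from $x$ directly). Then, conditioning on $\tau$ and using Wald-type summation (legitimate since $\tau$ is independent of $X$ and has finite mean $1/\eta_1$),
$$
E_x(R_1) \le t^* + E_x(\tilde S_\tau) \le t^* + E(\tau)\bigl(t^* + t^*/\eta_2\bigr) = t^* + \frac{1}{\eta_1}\bigl(t^* + t^*/\eta_2\bigr) < \infty .
$$

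For $R_2 - R_1$, the argument is essentially identical but now uses the regeneration structure of Remark \ref{rem:iidregen}: at time $R_1$ the process has law $\nu$ and is independent of $\mathcal F_{S_1-}$, so $R_2 - R_1$ has, conditionally, the same law as $R_1$ would have when started from $\nu$ (up to the shift by $t^*$). More concretely, $S_2 - R_1$ is the first of the post-$R_1$ times $\tilde S_k$ (re-counted from $R_1$) whose associated uniform falls below $\eta_1$, so by the same geometric-sum-of-bounded-hitting-times bound, $E_\nu(S_2 - R_1) \le \frac{1}{\eta_1}(t^* + t^*/\eta_2)$, and $R_2 - R_1 = (S_2 - R_1) + t^*$ has finite expectation, bounded by the same constant as for $R_1$. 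I would note that the bound is in fact uniform over $f \in H(\beta,F,L,f_{min})$ since both $\eta_1$ (by \eqref{eq:regenunif}) and $\eta_2$ (by Proposition \ref{prop:control}) are uniform in $f$.

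The only mildly delicate point — the "main obstacle" such as it is — is bookkeeping the independence when summing a random ($\tau$-indexed) number of inter-visit times: one must be careful that the blocks $\tilde S_{n+1} - \tilde S_n$ are not independent, but they are each stochastically dominated (conditionally on the past) by a fixed random variable of finite mean, which is all that is needed for the expectation bound. Everything else is a routine application of the strong Markov property together with Proposition \ref{prop:control} and Remark \ref{rem:iidregen}.
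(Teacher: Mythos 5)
Your proof is correct, and it is grounded in exactly the same underlying mechanism as the paper's: the uniform lower bound $P_{t^*}(y,B_{\delta^*}(u^*))\geq\eta_2$ of Proposition~\ref{prop:control} yields a geometric tail $P_y(\sigma>kt^*)\leq(1-\eta_2)^k$ for the return time to the regeneration set, and $\tau=\inf\{n:U_n<\eta_1\}$ is a geometric sampling time. You also correctly handle the one real subtlety (the blocks $\tilde S_{n+1}-\tilde S_n$ are not i.i.d., but each is stochastically dominated conditionally on the past, which suffices for the Wald-type sum).

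Where you differ from the paper is in strategy, not substance. The paper does not prove Lemma~\ref{lem:FTregen} directly at this point; it defers it and instead proves the stronger exponential-moment estimate \eqref{eq:ctmoment}, $\sup_{v_1,v_2}E_{(v_1,v_2)}(\kappa^{\tau})<\infty$, for the \emph{coupling} time $\tau$ of a pair $(X,Y)$ of processes, using $P^{\otimes 2}_{t^*}((v_1,v_2),B_{\delta^*}(u^*)^2)\geq\eta_2^2$. The lemma is then harvested via the observation $R_1\leq\tau$ (a remark which itself requires a small indexing argument the paper leaves implicit). Your version is self-contained and works with the single process and the constant $\eta_2$ rather than $\eta_2^2$; it establishes only the first moment, which is what the lemma asks for, whereas the paper's route incidentally gives exponential moments because they are needed anyway for the ergodicity rate \eqref{eq:ergodic}. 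Both approaches are valid; yours is the more economical if the target is really just Lemma~\ref{lem:FTregen}, while the paper's avoids duplicating the geometric-decay argument. One minor phrasing point: ``$\sigma_n$ is stochastically dominated by $\sup_{y}\sigma(y)$'' should be read as ``$\sigma_n$ has, conditionally on the past, a tail dominated uniformly in the starting point,'' which is what you in fact use in the next sentence.
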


The proof of this lemma is postponed to the next subsection where we prove a stronger result. Now the following result implies that our process is actually positive Harris recurrent.

\begin{prop}\label{prop:HR}
$X$ is Harris recurrent with invariant probability measure $ \pi  $ which is given by 
$$
\pi (B):=\frac{1}{ E_x ( R_2 - R_1) }  E_x \left( \int_{R_1}^{R_2} 1_B(X_s) ds \right) .
$$
\end{prop}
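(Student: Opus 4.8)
The plan is to verify the standard regeneration criterion for positive Harris recurrence: the process restricted to the regeneration times $(R_n)$ forms an i.i.d.\ structure, and the candidate measure $\pi$ defined through one regeneration block is invariant, finite, and controls all other measures in the required sense. First I would recall from Remark \ref{rem:iidregen} that, after Nummelin splitting, the blocks $\bigl( (X_{R_n + t})_{0 \le t \le R_{n+1} - R_n} \bigr)_{n \ge 1}$ are i.i.d.: each $X_{R_n}$ has law $\nu$ and the post-$R_n$ trajectory is independent of $\mathcal F_{S_n-}$, hence in particular of the previous blocks. Combined with Lemma \ref{lem:FTregen}, which gives $E_x(R_1) < \infty$ and $E_x(R_2 - R_1) < \infty$ (so in particular $0 < E_x(R_2 - R_1) < \infty$ since trivially $R_2 - R_1 \ge t^*$), this puts us exactly in the framework of the classical regeneration theorem (see e.g.\ Chapter 6 of \cite{cours} or Asmussen).

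The main steps I would carry out are the following. \textbf{Finiteness and positivity of $\pi$.} Since $R_2 - R_1 \ge t^*$, the denominator $E_x(R_2 - R_1)$ is strictly positive; boundedness of the state space gives $E_x\bigl(\int_{R_1}^{R_2} 1_B(X_s)\,ds\bigr) \le E_x(R_2 - R_1) < \infty$, so $\pi$ is a well-defined sub-probability, and taking $B = [0,K]^N$ shows $\pi([0,K]^N) = 1$, so $\pi$ is a probability measure. One should also check that the right-hand side does not depend on the starting point $x$: this follows because $X_{R_1} \sim \nu$ regardless of $x$ and the block $[R_1, R_2]$ depends only on the post-$R_1$ trajectory, which has a law independent of $x$; hence $E_x(\,\cdot\,)$ over this block equals $E_\nu(\,\cdot\,)$ over $[R_0, R_1]$ with $R_0 := 0$ in the stationary regeneration scheme. \textbf{Invariance.} Here I would use the cycle-stationarity: for bounded measurable $g$,
$$
E_x\left( \int_{R_1}^{R_2} g(X_s)\,ds \right) = E_x\left( \int_{R_1}^{R_2} P_u g(X_s)\,ds \right)
$$
is not quite what is needed; rather, the clean route is to invoke the general fact that for a process with i.i.d.\ regeneration cycles, the occupation measure over one cycle, normalized by the expected cycle length, is an invariant measure for the semigroup $(P_t)$. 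Concretely, one writes, for $t > 0$,
$$
\pi P_t (B) = \frac{1}{E_x(R_2 - R_1)} E_x\left( \int_{R_1}^{R_2} P_t 1_B(X_s)\,ds \right) = \frac{1}{E_x(R_2 - R_1)} E_x\left( \int_{R_1}^{R_2} 1_B(X_{s+t})\,ds \right),
$$
using the Markov property inside the block, and then a shift/telescoping argument across consecutive cycles (using that the chain of cycles is i.i.d., so the expected occupation of $B$ over $[R_1 + t, R_2 + t]$ equals that over $[R_1, R_2]$) to conclude $\pi P_t(B) = \pi(B)$. \textbf{Harris property.} For any $B$ with $\pi(B) > 0$, the cycle occupation $\int_{R_1}^{R_2} 1_B(X_s)\,ds$ has positive expectation, hence is strictly positive with positive probability on each (i.i.d.) cycle; since the process makes infinitely many regeneration cycles ($R_n < \infty$ for all $n$ $P_x$-a.s., which follows from Proposition \ref{prop:control} forcing infinitely many visits to $B_{\delta^*}(u^*)$ together with Lemma \ref{lem:FTregen} giving a.s.\ finite inter-regeneration times), the second Borel--Cantelli lemma for the i.i.d.\ sequence of cycle-occupation events gives $\int_0^\infty 1_B(X_s)\,ds = \infty$ $P_x$-a.s. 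Uniqueness of $\pi$ follows from Harris recurrence by the standard argument.

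The main obstacle I expect is making the invariance computation fully rigorous, i.e.\ justifying the telescoping/shift identity $E_x\bigl(\int_{R_1}^{R_2} 1_B(X_{s+t})\,ds\bigr) = E_x\bigl(\int_{R_1}^{R_2} 1_B(X_s)\,ds\bigr)$. The subtlety is that the time-shift by $t$ does not respect cycle boundaries, so one has to split the shifted integral at the regeneration epochs $R_n$ falling in $[R_1 + t, R_2 + t]$ and use the i.i.d.\ cycle structure together with Wald-type identities controlling the (random) number of cycles involved; finiteness of $E_x(R_2 - R_1)$ and of the cycle-occupation times is what makes these manipulations legitimate. Since this is entirely classical once the i.i.d.\ cycle structure (Remark \ref{rem:iidregen}) and the integrability (Lemma \ref{lem:FTregen}) are in hand, I would most likely not reproduce the details but cite Chapter 6 of \cite{cours}, simply checking that the hypotheses there — existence of regeneration times with finite mean return time on a set the process visits infinitely often — are met here. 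Thus the proof reduces to: (1) cite the i.i.d.\ cycle construction; (2) cite Lemma \ref{lem:FTregen} for integrability; (3) apply the classical regeneration theorem to obtain Harris recurrence and the stated formula for $\pi$.
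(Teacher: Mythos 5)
Your proposal is in the same regeneration framework as the paper, but the route you sketch is different and there is one genuine technical gap worth flagging.

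\textbf{Different route.} You propose to verify $\pi P_t = \pi$ directly and then obtain the Harris property from Borel--Cantelli on cycle-occupation events, flagging the time-shift/telescoping identity as the main difficulty. The paper sidesteps this entirely: it never checks $\pi P_t = \pi$ by hand. Instead it fixes $B$ with $\pi(B) > 0$, decomposes $A_t = \int_0^t 1_B(X_s)\,ds$ over the regeneration cycles $[R_n, R_{n+1}]$, applies the strong law of large numbers to the cycle sums together with $\tilde N_t / t \to 1 / E_x(R_2 - R_1)$, and concludes $A_t/t$ converges a.s.\ to a strictly positive constant. Divergence of $\int_0^\infty 1_B(X_s)\,ds$ is then immediate, giving Harris recurrence from the definition \eqref{eq:definrec}. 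This is cleaner than your plan: the a.s.\ ratio limit simultaneously supplies the Harris property and identifies the invariant measure without any $\pi P_t$ computation, and it avoids the shift argument you were worried about.

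\textbf{The gap.} You assert that the blocks $\bigl((X_{R_n+t})_{0\le t\le R_{n+1}-R_n}\bigr)_n$ are i.i.d., arguing that $(X_{R_n+t})_{t\ge 0}$ is independent of $\mathcal F_{S_n-}$ and ``hence in particular of the previous blocks.'' This inference is not correct. The $(n-1)$-th block lives on $[R_{n-1}, R_n]$ with $R_n = S_n + t^*$, so it depends on the trajectory on $[S_n, R_n]$, which is \emph{not} $\mathcal F_{S_n-}$-measurable. Hence consecutive cycles are \emph{not} independent; only cycles separated by at least one intermediate cycle are (the $(n-2)$-th block is $\mathcal F_{R_{n-1}}$-measurable and $R_{n-1} = S_{n-1}+t^* < S_n$). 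This is precisely why the paper's proof splits $\tilde N_t$ into $\tilde N_t^e$ and $\tilde N_t^o$ and applies the strong law of large numbers separately to even-indexed and odd-indexed cycles. Your Borel--Cantelli step inherits the same problem: the cycle-occupation events are not independent as a whole sequence, so one must run the argument on one parity class. The fix is easy, but as written the i.i.d.\ claim is false and the SLLN/Borel--Cantelli applications need the even/odd decomposition to go through.

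Also, a minor point: the Borel--Cantelli argument as you phrase it (infinitely many cycles with positive occupation) does not by itself give divergence of the total occupation. You need either a fixed threshold $\epsilon > 0$ with $P(\text{cycle occupation} > \epsilon) > 0$, or, as the paper does, the SLLN giving a linear-in-$n$ growth of the partial sums.

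\end{document}
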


\begin{proof}
Fix $B \in {\mathcal B} (  [0, K]^N)$ and define the process $A_t$ by 
$$
A_t:= \int_0^t 1_B(X_s)ds.
$$
Assume that $\pi(B)>0,$ then, according to the definition of Harris recurrence, it is enough to show that for all $x, \lim_{t \to +\infty} \frac{A_t}{t}>0.$

We denote by $\tilde N_t$, $\tilde N^e_t$ and $\tilde N^o_t$ the counting processes respectively associated with the sequences of stopping times $(R_n)_{n \in \N^*}, (R_{2n})_{n \in \N^*}$ and $(R_{2n+1})_{n \in \N}:$

$$
\tilde N_t := \sum_{n=1}^\infty 1_{R_n \leq t}, \; \; \tilde N_t^e := \sum_{n=1}^\infty 1_{R_{2n} \leq t} \; \mbox{and} \; \tilde N_t^o := \sum_{n=0}^\infty 1_{R_{2n+1} \leq t}.
$$

For all $t$ we have  $\tilde N_t=\tilde N^e_t + \tilde N^o_t$ and
\begin{multline*}
\frac{A_t}{t}=\frac{1}{t} \left( \int_0^{R_1} 1_B(X_s)ds + \sum_{n=1}^{\tilde N_t} \int_{R_n}^{R_{n+1}} 1_B(X_s)ds - \int_{R_{\tilde N_t}}^t 1_B(X_s)ds \right)
\\
=\frac{1}{t} \left( \int_0^{R_1} 1_B(X_s)ds + \sum_{k=1}^{\tilde N^e_t} \int_{R_{2k}}^{R_{2k+1}} 1_B(X_s)ds + \sum_{k=1}^{\tilde N^o_t} \int_{R_{2k-1}}^{R_{2k}} 1_B(X_s)ds - \int_{R_{\tilde N_t}}^t 1_B(X_s)ds \right).
\end{multline*}

When $t$ goes to $\infty,$ we obtain, using Lemma  \ref{lem:FTregen} to deal with the first and the last terms,

\begin{multline*}
\lim_{t \to +\infty} \frac{A_t}{t}
\\
= \lim_{t \to +\infty} \frac{\tilde N_t}{t} \frac{1}{\tilde N_t} \left( \tilde N^e_t \left( \frac{1}{\tilde N^e_t} \sum_{k=1}^{\tilde N^e_t} \int_{R_{2k}}^{R_{2k+1}} 1_B(X_s)ds \right) + \tilde N^o_t \left( \frac{1}{\tilde N^o_t} \sum_{k=1}^{\tilde N^o_t} \int_{R_{2k-1}}^{R_{2k}} 1_B(X_s)ds \right) \right).
\end{multline*}

The decomposition between even and odd regeneration times is used here to be able to apply the strong law of large numbers, based on Remark \ref{rem:iidregen}. In this way we obtain that 
$$
\lim_{t \to +\infty} \frac{1}{\tilde N^e_t} \sum_{k=1}^{\tilde N^e_t} \int_{R_{2k}}^{R_{2k+1}} 1_B(X_s)ds= \lim_{t \to +\infty} \frac{1}{\tilde N^o_t} \sum_{k=1}^{\tilde N^o_t} \int_{R_{2k-1}}^{R_{2k}} 1_B(X_s)ds  = \pi(B) > 0  \; \mbox{a.s.}
$$
We can use the same decomposition to obtain that 
$$
\lim_{t \to +\infty} \frac{\tilde N_t}{t} = \frac{1}{E_x(R_2-R_1)} \; \mbox{a.s.}
$$
Putting all together we have 
$$
\lim_{t \to +\infty} \frac{A_t}{t}=\frac{\pi(B)}{E_x(R_2-R_1)}
$$
and we can conclude the proof using Lemma \ref{lem:FTregen} once again.
\end{proof}

\subsection*{Speed of convergence to equilibrium -- Proof of (\ref{eq:ergodic}) in Theorem \ref{theo:harrisok}}
We now show how to couple two processes $X$ and $Y$ following the same dynamics (\ref{eq:dyn}) using Proposition \ref{prop:control} and the lower bound (\ref{eq:regenunif})  of Remark \ref{rq:regenunif}. This coupling will give us a control of the distance in total variation between $P_x$ and $P_y,$ where $x$ and $y$ are the respective starting points of processes $X$ and $Y.$
  
The coupling procedure consists in using the same realization of uniform random variables $(U_n)_{n \in \N } $ for both processes, relying on (\ref{eq:regenunif}),  when both processes $X$ and $Y$ are in the regeneration set $B_{\delta^*} ( u^*)$ at the same time. More precisely, we let evolve $X$ and $Y$ independently up to the first time that they are both in the set $ B_{\delta^*} ( u^*).$ We introduce the sequence of stopping times 
$$ \bar S_1 = \inf \{ t > 0 : (X_t , Y_t) \in B_{\delta^*} ( u^*) \times B_{\delta^*} ( u^*) \} $$
and 
$$ \bar S_{n} = \inf \{ t > \bar S_{n-1} + t^* : (X_t , Y_t) \in B_{\delta^*} ( u^*) \times B_{\delta^*} ( u^*) \} ,\;  n \geq 1.$$ 

Applying Proposition \ref{prop:control} to two independent processes $X$ and $Y,$ we obtain

\begin{equation}\label{eq:alignment}
\inf_{f \in H( \beta , F, L, f_{min} )} \; \inf_{ v_1 , v_2 \in [0,K]^N  }  P^{\otimes 2}_{t^* } ( ( v_1 , v_2) , B_{\delta^*} ( u^* )^2 ) \geq \eta_2^2.
\end{equation}

As a consequence, $ \bar S_n < \infty $ almost surely for all $n,$ and $ P_{(v_1, v_2)}  ( \bar S_1 > n t^* ) \le (1 - \eta_2^2 )^n ,$ \textit{i.e.}\ $\bar S_1 $ and $ \bar S_{n+1} - \bar S_n$ possess exponential moments 
$$E_{(v_1, v_2) } [ e^{ \alpha \bar S_1 } ] < \infty $$
uniformly in the starting configuration $ (v_1, v_2) $ for all $ \alpha  < \frac{- \ln ( 1 - \eta_2^2 ) }{t^*} .$

We are now able to couple the processes $X$ and $Y.$ We work conditionally on the realization of a sequence of \textit{i.i.d.}\  uniform random variables $(U_n)_{n \in \N } $ and define the coupling time $\tau$ by
$$
\tau:= \inf \{ \bar S_n : U_n  \le \eta_1 \} + t^* .
$$
Using the regenerative construction described in the previous subsection based on (\ref{eq:regenunif}), it is evident that $X$ and $Y$ can be constructed jointly in such a way that $ X_\tau = Y_\tau \sim \nu $ and such that $ X_t = Y_t $ for all $t \geq \tau .$  Since $\tau $ is constructed by sampling within the sequence $ (\bar S_n)_{n \in \N} $ at an independent geometrical time, it is immediate to see that there exists $\kappa > 1 $ such that 
\begin{equation}\label{eq:ctmoment}
\sup_{ v_1, v_2 \in [0, K]^N } E_{(v^1,v^2)}(\kappa^{\tau}) < + \infty.
\end{equation} 

\begin{rem}
Notice that the regeneration time $R_1$ can be compared to $\tau $ and that $R_1 \le \tau .$  As a consequence, \eqref{eq:ctmoment} implies a proof of Lemma \ref{lem:FTregen}. 
\end{rem}

Since the two processes $X$ and $Y$ follow the same trajectory after time $\tau, $ we obtain the following classical upper bound on the total variation distance.
\begin{equation}\label{eq:couplingspeed}
\| P_t(x,\cdot ) -P_t(y,\cdot ) \| _{TV} \leq P_{(x,y)}(\tau >t) \leq \kappa^{-t} E_{(x,y)}( \kappa^{\tau}).
\end{equation}

Now putting $C:=\sup_{x , y \in [0,K]^N} E_{(x, y )}( \kappa^{\tau}),$ the integration of (\ref{eq:couplingspeed}) with respect to the invariant measure $\pi (dy)$ implies that 
\begin{equation*}
\sup_{x \in [0,K]^N} \left\Vert P_t(x,\cdot ) - \pi \right\Vert_{TV} \leq C \kappa^{-t}.
\end{equation*} 
This finishes the proof of Theorem \ref{theo:harrisok}. \hfill $\qed$

\subsection{Estimates on the invariant density of a single particle}

We start with some simple preliminary estimates. Recall that
$$ \mu ( ds, dx) = \sum_n \delta_{T_n } (ds) \delta_{X_{T_n-}} (dx) $$
denotes the jump measure of the system, with compensator 
$$ \hat \mu  ( ds, dx ) = \bar f (X_s ) ds \delta_{X_s} (dx), \mbox{ with } \bar f (x) = \sum_{i=1}^N f (x^i ).$$
Let $Z_k = X_{T_k- } , k \geq 1 , $ be the jump chain. Then the following holds.

\begin{prop}\label{prop:3}
$(Z_k)_k$ is Harris recurrent with invariant measure given by 
$$ \pi^Z ( g) = \frac{1}{\pi ( \bar f ) } \pi ( \bar f g ) , $$
for any $g : \R_+^N \to \R$ measurable and bounded.
\end{prop}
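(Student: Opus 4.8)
The plan is to identify $(Z_k)_k$ as a Markov chain sampled from the continuous-time process $X$ at its jump times, and to deduce its Harris recurrence and the form of its invariant measure from the corresponding facts about $X$ established in Theorem \ref{theo:harrisok} and Proposition \ref{prop:HR}. The first step is to recall that between consecutive jump times the process is deterministic (it follows the flow $\gamma$), so $X_{T_{k+1}-}$ is a measurable function of $X_{T_k}$ and the inter-jump time, and conditionally on $X_{T_k} = x$ the next pre-jump state has a law that depends only on $x$; hence $(Z_k)_k$ is indeed a Markov chain. Its recurrence in the sense of Harris then follows from the regeneration structure already built for $X$: at the regeneration times $R_n$ the process satisfies $X_{R_n} \sim \nu$ independently of the past, and since $\nu$ is a fixed probability measure on $[0,K]^N$ the number of jumps accumulated between $R_n$ and $R_{n+1}$ together with the pre-jump positions form i.i.d.\ blocks, which is exactly a regeneration structure for the discrete chain $(Z_k)_k$. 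This gives Harris recurrence of $(Z_k)_k$ together with a renewal-reward representation of its invariant measure.

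The second and main step is to derive the formula $\pi^Z(g) = \pi(\bar f g)/\pi(\bar f)$. The natural route is via the compensator identity: the jump measure $\mu(ds,dx) = \sum_n \delta_{T_n}(ds)\delta_{X_{T_n-}}(dx)$ has compensator $\hat\mu(ds,dx) = \bar f(X_s)\,ds\,\delta_{X_s}(dx)$. Therefore for any bounded measurable $g$, the process $\int_0^t\!\!\int g(x)\,\mu(ds,dx) - \int_0^t \bar f(X_s) g(X_s)\,ds$ is a martingale. Dividing by $t$ and letting $t\to\infty$, the first term is $\frac{1}{t}\sum_{k:\,T_k \le t} g(Z_k)$ which, by the ergodic theorem for the chain $(Z_k)_k$ together with $N_t/t \to$ (mean jump rate), converges to $\pi^Z(g)$ times the asymptotic jump frequency; the second term converges by the ergodic theorem for $X$ (Theorem \ref{theo:harrisok}) to $\pi(\bar f g)$. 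Taking $g \equiv 1$ identifies the asymptotic jump frequency as $\pi(\bar f)$, and the ratio yields the claimed formula. Equivalently, one can argue directly at the level of the renewal-reward formulas: $\pi^Z(g)$ is proportional to $E_x\big[\sum_{k:\, R_1 < T_k \le R_2} g(Z_k)\big]$, and this expectation equals $E_x\big[\int_{R_1}^{R_2} \bar f(X_s) g(X_s)\,ds\big]$ by the compensation formula applied on the regeneration block, which is $E_x(R_2-R_1)\,\pi(\bar f g)$ up to the normalizing constant; normalizing by the $g\equiv 1$ case gives the result.

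The main obstacle I expect is making the ergodic-theorem step fully rigorous: one must check that the limits $\frac1t\sum_{k:\,T_k\le t} g(Z_k)$ and $\frac{N_t}{t}$ exist and are deterministic, which requires knowing that $(Z_k)_k$ has a finite invariant measure and that $T_k/k$ converges to a positive constant — i.e.\ that the expected inter-jump time under stationarity is finite and positive. Finiteness of $E_x(R_2 - R_1)$ (Lemma \ref{lem:FTregen}) controls the number of jumps per regeneration cycle being a.s.\ finite with finite mean (using $\|f\|_\infty \le F$, so the jump rate is bounded by $NF$ and the number of jumps in $[R_1,R_2]$ is stochastically dominated by a Poisson variable with that rate), and positivity of $\pi(\bar f)$ follows from $f \ge f_{min} > 0$ away from the reset coordinate together with the fact that $\pi$ charges configurations where at least one coordinate is bounded away from $0$. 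Once these integrability and positivity facts are in place, the renewal-reward / compensation argument closes cleanly, so the only real work is bookkeeping around the regeneration blocks rather than any new estimate.
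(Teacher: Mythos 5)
Your proposal is correct and follows essentially the same route as the paper: write the empirical average $\tfrac1n\sum_{k\le n}g(Z_k)$ in terms of the jump measure, split off the compensated (martingale) part which is $o(t)$, apply the ergodic theorem for $X$ to get $\pi(\bar f g)$, and identify the asymptotic jump frequency $N_t/t\to\pi(\bar f)$ to conclude. The paper is slightly more economical in that it does not separately establish Harris recurrence of $(Z_k)$ from the regeneration blocks but obtains the a.s.\ ergodic limit for $(Z_k)$ directly from this computation; your extra discussion of the regeneration structure and of the renewal-reward alternative is consistent but not needed.
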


\begin{proof}
Let $ g$ be a bounded test function. We have to prove that
$$ \frac{1}{n} \sum_{k=1}^n g ( Z_k) \to \pi^Z ( g) $$
as $n \to \infty, $ $P_x-$almost surely, for any fixed starting point $x \in [0, K]^N.$ But
$$ \frac{1}{n} \sum_{k=1}^n g ( Z_k) = \frac{1}{n} \sum_{k=1}^n g ( X_{T_k-}),$$
and, putting $ N_t = \sup \{ n : T_n \le t \},$  
$$
 \lim_{ n \to \infty }  \frac{1}{n} \sum_{k=1}^n g ( X_{T_k-}) = \lim_{t \to \infty}\frac{t}{N_t} \frac{1}{t} \sum_{k=1}^{N_t} g ( X_{T_k-}) 
= \lim_{t \to \infty}\frac{t}{N_t} \frac{1}{t} \int_0^t\int_{\R_+^N} g (x) \mu ( ds, dx) .
$$
By the law of large numbers, $ N_t/t \to \int \bar f(x) \mu (dx ) = \mu ( \bar f ) ,$ and this convergence holds almost surely. Moreover, 
\begin{equation}\label{eq:martingaleplusbiaisconv}
 \frac{1}{t} \int_0^t\int_{\R_+^N} g (x) \mu ( ds, dx)  = \frac{1}{t} M_t + \frac{1}{t} \int_0^t\int_{\R_+^N} g (x) \hat \mu ( ds, dx) ,
\end{equation}
where $ M_t = \int_0^t \int g ( x) [\mu (ds, dx) - \hat \mu  (ds, dx) ].$ Then $M_t$ is in ${\mathcal M}^{2,d}_{loc} ,$ the set of all
locally square integrable purely discontinuous martingales,
with predictable quadratic covariation process
\begin{equation}
<M>_t\; =\int_0^t g^2 (X_s) \bar f (X_s )ds 
\end{equation}
where
$$ \frac{< M>_t}{t} \to \pi ( g^2 \bar f ) $$
almost surely, as $t \to \infty.$ By the martingale convergence theorem, see \textit{e.g.}\ Jacod-Shiryaev (2003) \cite{js}, chapter VIII, Corollary 3.24 , $ t^{-1/2} M_t $ converges in law to a normal distribution. As a consequence, $M_t/t \to 0$ almost surely. 

We now treat the second term in \eqref{eq:martingaleplusbiaisconv}. By the ergodic theorem for integrable additive functionals, 
$$ \frac{1}{t} \int_0^t\int_{\R_+^N} g (x) \hat \mu ( ds, dx) = \frac{1}{t} \int_0^t g (X_s) \bar f( X_s) ds \to \pi ( \bar f g ),$$
and this finishes the proof.
\end{proof}

{\bf Exchangeability of the invariant measure}
We denote by $X^i : \R_+^N \to \R_+, ( x^1 , \ldots , x^N ) \mapsto x^i $ the $i-$th coordinate map.

\begin{prop}
For all $ 1 \le i \le N, $ ${\mathcal L}_\pi ( X^i ) = {\mathcal L}_\pi ( X^1).$
\end{prop}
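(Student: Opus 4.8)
The plan is to exploit the symmetry of the dynamics \eqref{eq:dyn} under permutations of the neuron labels, together with the uniqueness of the invariant probability measure $\pi$ established in Theorem \ref{theo:harrisok}. Fix a permutation $\sigma$ of $\{1,\dots,N\}$ and let $\tau_\sigma : [0,K]^N \to [0,K]^N$ be the corresponding coordinate permutation $\tau_\sigma(x)_i = x_{\sigma(i)}$. First I would observe that the law of the process is equivariant under $\tau_\sigma$: if $(X_t)_t$ solves \eqref{eq:dyn} with $X_0 = x$, then $(\tau_\sigma X_t)_t$ has the same law as the solution started from $\tau_\sigma x$. This is immediate from the structure of the generator \eqref{eq:generator0}--\eqref{eq:delta}, since $L(\varphi \circ \tau_\sigma)(x) = (L\varphi)(\tau_\sigma x)$: the sum $\sum_i f(x_i)[\varphi(\Delta_i x) - \varphi(x)]$ and the drift term $\lambda \sum_i \partial_{x_i}\varphi(x)(x_i - m)$ are both invariant under relabeling, because $\tau_\sigma \circ \Delta_i = \Delta_{\sigma^{-1}(i)} \circ \tau_\sigma$ and $a_K$ acts coordinatewise. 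Alternatively, and perhaps more cleanly for a PDMP, one checks directly that $\tau_\sigma X$ solves the SDE \eqref{eq:dyn} driven by the relabeled family of Poisson measures $(N^{\sigma(i)})_i$, which has the same distribution as $(N^i)_i$ by the i.i.d.\ assumption.

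Next I would push this equivariance forward to the invariant measure. Define $\pi_\sigma := \pi \circ \tau_\sigma^{-1}$, the image of $\pi$ under $\tau_\sigma$. Using the equivariance of the semigroup, $P_t(\varphi \circ \tau_\sigma) = (P_t \varphi) \circ \tau_\sigma$, one gets for any bounded measurable $\varphi$,
\begin{equation*}
\int P_t \varphi \, d\pi_\sigma = \int (P_t\varphi)\circ\tau_\sigma \, d\pi = \int P_t(\varphi\circ\tau_\sigma)\,d\pi = \int \varphi\circ\tau_\sigma\, d\pi = \int \varphi\, d\pi_\sigma,
\end{equation*}
so $\pi_\sigma$ is also an invariant probability measure for $X$. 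By the uniqueness statement in Theorem \ref{theo:harrisok}, $\pi_\sigma = \pi$. In other words, $\pi$ is invariant under all coordinate permutations, i.e.\ $\pi$ is exchangeable.

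Finally, applying the previous step with a transposition $\sigma$ exchanging $1$ and $i$: for any bounded measurable $g:\R_+\to\R$, the coordinate map satisfies $X^i = X^1 \circ \tau_\sigma$, hence
\begin{equation*}
E_\pi(g(X^i)) = \int g(x_i)\,\pi(dx) = \int g\circ X^1 \circ \tau_\sigma \, d\pi = \int g\circ X^1 \, d\pi_\sigma = \int g\circ X^1\, d\pi = E_\pi(g(X^1)),
\end{equation*}
which is exactly ${\mathcal L}_\pi(X^i) = {\mathcal L}_\pi(X^1)$. The only point requiring any care — the "main obstacle," though it is mild — is the verification of the semigroup equivariance $P_t(\varphi\circ\tau_\sigma) = (P_t\varphi)\circ\tau_\sigma$ at the level of the PDMP construction; I would handle this by appealing to the explicit acceptance/rejection construction described after \eqref{eq:delta}, noting that permuting the labels permutes the candidate Poisson measures and the acceptance probabilities $f(X^i_{\bar T_n-})/(Nf(K))$ correspondingly, so the relabeled trajectory is a version of the process started from the relabeled initial condition.
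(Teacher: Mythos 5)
Your proof is correct and takes a genuinely different---arguably cleaner---route than the paper. Both arguments rest on the same core observation, namely that the dynamics \eqref{eq:dyn}--\eqref{eq:generator0} commute with coordinate permutations $\tau_\sigma$. The paper exploits this in an elementary, pointwise fashion: it starts the process from a completely symmetric configuration $x = (x^1,\dots,x^1)$, notes that the process $(Y_t)$ obtained by swapping coordinates $1$ and $i$ then has the same law as $(X_t)$ (this needs the starting state to be symmetric), and identifies the one-dimensional marginals of $\pi$ via the pathwise ergodic theorem $\frac1t\int_0^t g(X_s)\,ds \to \pi(g)$ $P_x$-a.s. You instead push the permutation equivariance directly onto the invariant measure: from $P_t(\varphi\circ\tau_\sigma) = (P_t\varphi)\circ\tau_\sigma$ you deduce that $\pi\circ\tau_\sigma^{-1}$ is also invariant, and then uniqueness from Theorem~\ref{theo:harrisok} forces $\pi\circ\tau_\sigma^{-1}=\pi$, i.e.\ full exchangeability of $\pi$, from which equality of the one-dimensional marginals is immediate by taking $\sigma$ to be the transposition $(1\,i)$. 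Your version avoids the need to select a symmetric initial condition and sidesteps the ergodic theorem entirely, replacing it by the uniqueness of $\pi$ (which is what licenses the paper's ergodic argument in the first place), and it delivers a slightly stronger intermediate conclusion---$\pi$ is exchangeable as a measure on $[0,K]^N$, not merely that its one-dimensional marginals coincide. The identity $\tau_\sigma\circ\Delta_i = \Delta_{\sigma^{-1}(i)}\circ\tau_\sigma$ that you invoke checks out against \eqref{eq:delta}, and the alternative verification via the acceptance/rejection construction is also sound.
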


\begin{proof}
Fix an initial configuration $x = (x^1, x^1 , \ldots , x^1) \in [0, K]^N$ consisting of $N$ particles which are all in the same position. Let $ \check g : \R_+ \to \R $ be a bounded test function and introduce $g ( x) := \check g (x^1 ) ,$ \textit{i.e.}\ $g$  depends only on the first coordinate.
By the ergodic theorem, 
$$ \frac1t \int_0^t g ( X_s) ds \to \int_\R \check g d {\mathcal L}_\pi ( X^1) $$
$P_x-$almost surely.

Now, introduce the system $ Y_t = (Y_t^1, \ldots , Y_t^N ) $ given by $ Y_t^k = X_t^k $ for all $ k \neq 1, i $ and $ Y_t^1 = X_t^i , $ $ Y_t^i = X_t^1 .$ Since the generator of $X$ is invariant under permutations, $ (Y_t)_{t \geq 0 } \stackrel{\mathcal L}{=} (X_t)_{t \geq 0}.$ In particular, 
$$ \int_\R \check g d {\mathcal L}_\pi ( X^1)  = \lim_{t \to \infty } \frac1t \int_0^t g ( X_s) ds = \lim_{ t \to \infty } \frac1t \int_0^t g ( Y_s) ds  $$
On the other hand,
$$ \lim_{ t \to \infty } \frac1t \int_0^t g ( Y_s) ds = \lim_{ t \to \infty } \frac1t \int_0^t \check g ( X^i_s) ds = \int_\R \check g d {\mathcal L}_\pi ( X^i),$$
and this finishes the proof.
\end{proof}
We are now going to study the support properties of the invariant measure of a single neuron. For that sake define for all $x \in [0,K], \; b(x):=\lambda (x-m)$ and recall that  $\gamma_t (x^i) \in \R_+$ denotes the solution of $ d \gamma_t (x^i) =-  b( \gamma_t (x^i)) dt ,$  given by 
$$ \gamma_t (x^i) = e^{- \lambda t }x^i + ( 1 - e^{- \lambda t })m.$$ 
Moreover, for $ x = (x^1 , \ldots , x^N), $ $ \gamma_t ( x ) = ( \gamma_t (x^1 ), \ldots , \gamma_t ( x^N) ) .$ Finally, let 
$$K(x, dy) = \sum_{i=1}^N f( x^i ) \delta_{  \Delta^i ( x) } (dy) , \quad H_f^x (t) = e^{ - \int_0^t \bar f ( \gamma_s ( x) ) ds } ,$$
where $ \bar f ( \gamma_s ( x) ) = \sum_{ i = 1 }^N f ( \gamma_s ( x^i ) ) $ and where $ \Delta^i ( x) $ was defined in \eqref{eq:delta} before.

We will use the change of variable, for a fixed value of $y,$
\begin{equation}\label{eq:covz}
z = \gamma_t ( y^1) , dz = - b (z) dt = - \lambda ( z-m) dt 
\end{equation} 
and denote by $ \kappa_y (z) $ the inverse function of $t \to \gamma_t ( y^1) .$

These definitions permit to obtain an expression of $\pi_1$.

\begin{prop} For all $z \in S_{d,k},$ we have
\begin{equation}\label{eq:pi1dens}
\pi_1 (z) =  \int \pi ( dx) \int K(x, dy ) H_f^y (\kappa_y   ( z) ) \frac{1_{I_y^m}(z)}{|b ( z)| } .
\end{equation}
Here the notation $I_y^m$ denotes either $]y,m[$ if $y<m$ or $]m,y[$ if $m<y.$
\end{prop}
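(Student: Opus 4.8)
The plan is to compute $\pi_1(z) = \int g\, d\mathcal{L}_\pi(X^1)$ for a test function $g$ via a one-step decomposition of the stationary process: I will express $X^1_t$ for $t$ just after a jump as the deterministic flow applied to the post-jump configuration, then integrate out the (exponentially distributed) time elapsed since the last jump. Concretely, starting the stationary process from $X_{T_1-}\sim \pi^Z$ (the jump chain invariant measure of Proposition \ref{prop:3}, which has density proportional to $\bar f$ against $\pi$), after the jump the configuration is distributed as $K(x,dy)/\bar f(x)$, and then the process follows the flow $\gamma_s$ for a duration with density $H_f^y(s)\bar f(\gamma_s(y))\,ds$ on the first coordinate weighted appropriately — but because we only track $X^1$ and are in the stationary regime, the cleanest route is to use the renewal/ergodic identity
$$
\int g\, d\mathcal{L}_\pi(X^1) = \frac{1}{\pi(\bar f)}\, E_\pi\!\left[ \int_{0}^{\tau_1} g(\gamma_s(X_0)^1)\, ds \cdot \bar f(X_0)\right]
$$
evaluated at a post-jump state, where $\tau_1$ is the time to the next jump. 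Expanding $E_{x}[\int_0^{\tau_1} g(\gamma_s(x)^1)\,ds] = \int_0^\infty g(\gamma_s(x)^1) H_f^x(s)\, ds$ and inserting the post-jump law $\int \pi(dx) K(x,dy)/\pi(\bar f)\cdot \bar f(x)$... wait — the $\bar f$ cancels, leaving $\frac{1}{\pi(\bar f)}\int \pi(dx) \sum_i f(x^i)\, E_{\Delta^i(x)}[\int_0^\infty g(\gamma_s(\cdot)^1) H_f^{\Delta^i(x)}(s)\,ds]$, i.e. exactly $\frac{1}{\pi(\bar f)}\int \pi(dx)\int K(x,dy) \int_0^\infty g(\gamma_s(y)^1) H_f^y(s)\, ds$. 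Hmm, this has a $1/\pi(\bar f)$ factor not present in \eqref{eq:pi1dens}; I expect it is absorbed because $\pi^Z$ already carries the normalization, so I will state the identity directly at the level of $\pi^Z$: $\pi_1(g) = \int \pi^Z(du)\, E_u\!\left[\int_0^{\tau_1} g(\gamma_s(u)^1)\,ds\right]\big/ E_{\pi^Z}[\tau_1]$, and then use $\pi^Z(du) = \int \pi(dx) K(x,du)/\pi(\bar f)$ together with the fact (by stationarity of the PDMP) that $E_{\pi^Z}[\tau_1]$ equals the reciprocal of the stationary jump rate $\pi(\bar f)$, so the normalizations cancel and one is left with $\int \pi(dx)\int K(x,dy)\int_0^\infty g(\gamma_s(y)^1) H_f^y(s)\, ds$.

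Next I would perform the change of variables \eqref{eq:covz} inside the inner integral: with $z = \gamma_s(y^1)$, one has $s = \kappa_y(z)$, $ds = dz/|b(z)|$, and as $s$ ranges over $[0,\infty)$ the value $z = \gamma_s(y^1)$ traces the open interval between $y^1$ and $m$ (monotonically, since the flow is a contraction toward $m$), which is exactly $I_y^m$. Therefore $\int_0^\infty g(\gamma_s(y^1)) H_f^y(s)\, ds = \int_{\R} g(z)\, H_f^y(\kappa_y(z))\, \frac{1_{I_y^m}(z)}{|b(z)|}\, dz$. Substituting this into the previous display and reading off the density gives precisely \eqref{eq:pi1dens}.

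The main obstacle will be justifying the stationary renewal identity rigorously — i.e. that $\pi_1(g)$ equals the $\pi^Z$-average of $E_u[\int_0^{\tau_1} g(\gamma_s(u)^1)\, ds]$ divided by the mean inter-jump time, with all normalizations cancelling. This is a standard Palm-calculus / cycle formula for the PDMP seen at its jump times, and it follows from the ergodic theorem applied to $\frac1t\int_0^t g(X_s^1)\,ds$ rewritten as a sum over inter-jump intervals $\sum_{k\le N_t}\int_{T_k}^{T_{k+1}} g(\gamma_{s-T_k}(X_{T_k})^1)\,ds$, using $N_t/t \to \pi(\bar f)$ (from the proof of Proposition \ref{prop:3}) and the strong law of large numbers for the jump chain $(Z_k)$, which is Harris recurrent with invariant law $\pi^Z$. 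A secondary technical point is checking that $z\mapsto \gamma_s(y^1)$ is a genuine $C^1$-diffeomorphism from $(0,\infty)$ onto $I_y^m$ with $|b(z)|>0$ throughout, which holds on $I_y^m$ since $b(z) = \lambda(z-m)$ vanishes only at $z=m\notin I_y^m$; the restriction $z \in S_{d,k}$ in the statement keeps us away from $z=m$ and from the boundary effects, so the formula is valid there.
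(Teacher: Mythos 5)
Your proposal lands on exactly the paper's key intermediate identity
$$\pi_1(g) = \int \pi(dx)\int K(x,dy)\int_0^\infty H_f^y(s)\, g(\gamma_s(y^1))\,ds,$$
after which the change of variables $z=\gamma_s(y^1)$, $ds = dz/|b(z)|$, $z\in I_y^m$ is performed word-for-word as in the paper, so the back end of your argument is the same. The front end differs in flavour rather than in substance: the paper obtains the identity by the purely algebraic step $\pi(G)=E_\pi\big[\bar f(X_t)\,\tfrac{G(X_t)}{\bar f(X_t)}\big]=\pi(\bar f)\,\pi^Z\big(\tfrac{G}{\bar f}\big)$ (Proposition \ref{prop:3}), then writes out the one-step transition of the pre-jump chain $\pi^Z=\mathcal L(X_{T_2-}\mid X_{T_1-}\sim\pi^Z)$; the $\bar f$ in the jump-time density cancels the $1/\bar f(Z_{n+1})$ and the $\pi(\bar f)$ cancels against $\pi^Z(dx)=\bar f(x)\pi(dx)/\pi(\bar f)$. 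You instead invoke the Palm/cycle formula $\pi_1(g)=E_{\text{post}}\big[\int_0^{\tau_1} g(\gamma_s(\cdot)^1)ds\big]\big/E_{\text{post}}[\tau_1]$ and let the normalisations $\pi(\bar f)$ cancel; this requires a (standard) renewal argument that the paper does not need, but it is of course a dual way of packaging the same information. One small slip you should fix: throughout you write $\pi^Z(du)=\int\pi(dx)\,K(x,du)/\pi(\bar f)$ and feed $\pi^Z$ into $E_u[\int_0^{\tau_1}\cdots]$ as if $u$ were a \emph{post}-jump state, but in the paper's notation $\pi^Z$ is the law of the \emph{pre}-jump positions $Z_n=X_{T_n-}$, with $\pi^Z(dx)=\bar f(x)\pi(dx)/\pi(\bar f)$; the measure you wrote down (and correctly used in the cycle formula) is actually the post-jump law $\int\pi^Z(dx)K(x,\cdot)/\bar f(x)$. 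Once the labels are corrected, the computation is correct and the two proofs coincide from \eqref{eq:pi1g} onward.
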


\begin{proof}
We have, by Proposition \ref{prop:3},  
$$
\pi ( G) =  E_\pi ( G ( X_t )) = E_\pi ( \bar f (X_t)  G ( X_t) \frac{1}{\bar f ( X_t) } )=  \pi ( \bar f ) E_{\pi^Z}  \left( G ( Z_n )  \frac{1}{\bar f (Z_n) }\right) . 
$$
We use that $ \pi^Z = {\mathcal L}(X_{T_2-} | X_{T_1-} \sim \pi^Z ) . $ 
Then we obtain
\begin{eqnarray}\label{eq:piG}
\pi (G) &=& \pi ( \bar f ) \pi^Z \left(  \frac{G ( Z_n  ) }{\bar f (Z_n) }\right) \nonumber \\
&=&  \int_{\R_+^N} \bar f (x) \pi (dx) \sum_{i=1}^N \frac{f ( x^i ) }{\bar f (x) } \int_0^\infty \bar f ( \gamma_t (  \Delta_i (x) )) \nonumber  \\
&&\quad e^{- \int_0^t \bar f ( \gamma_s (  \Delta_i (x) )) ds } G ( \gamma_t (  \Delta_i ( x) ) ) \frac{1 }{\bar f ( \gamma_t (  \Delta_i ( x) ))} dt \nonumber \\
&=&  \int_{\R_+^N} \pi ( dx) \int K(x, dy)  \int_0^\infty H_f^y (t)  G( \gamma_t (y) )  dt . 
\end{eqnarray}
Now, let $g  \in C^\infty_c (S_{d , \beta }) $ be a smooth test function having compact support in $ S_{d , \beta }.$ Using \eqref{eq:piG}, we obtain
\begin{equation}\label{eq:pi1g}
\pi_1 ( g) =\int_{\R_+^N} \pi ( dx) \int K(x, dy)  \int_0^\infty H_f^y (t)  g( \gamma_t (y^1) ) dt . 
\end{equation}
Then, with the change of variable announced in \eqref{eq:covz}, we can rewrite \eqref{eq:pi1g} in the following way:
$$
\pi_1 (z) =  \int \pi ( dx) \int K(x, dy ) H_f^y (\kappa_y   ( z) ) \frac{1_{I_y^m}(z)}{|b ( z)| } .
$$
\end{proof}

\subsection{Support of the invariant measure}

\begin{prop}\label{prop:suptrans}
For all $y \in ]0,K[,$ all $\delta > 0,$ we have
$$
\inf_{f \in H( \beta , F, L, f_{min} )} \; \pi_1 \left(B_\delta(y) \right)>0  .
$$
\end{prop}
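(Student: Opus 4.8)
The plan is to use the expression \eqref{eq:pi1dens} for $\pi_1$ and show that the integrand is strictly positive for $z$ in some small interval inside $B_\delta(y)$, while the integration over $\pi(dx)$ and the jump kernel $K(x,dy')$ picks up a set of positive $\pi$-measure thanks to the support result already obtained for the full process. First I would fix $y \in ]0,K[$ and $\delta > 0$ and choose a target point $z_0$ inside $B_\delta(y)$ that lies strictly between $0$ and $K$ and strictly on one side of $m$; since $b(z) = \lambda(z-m)$ does not vanish there, $|b(z)|$ is bounded below and above on a neighbourhood of $z_0$. The key observation is that in \eqref{eq:pi1dens} a given $z$ contributes through a configuration $y'$ (the post-jump position, one of whose coordinates is $0$ because of the reset, and another of whose first coordinate we are tracking) together with a flow time $\kappa_{y'}(z)$; I need to exhibit, for each $z$ near $z_0$, a set of pre-jump configurations $x$ of positive $\pi$-measure and a set of post-jump configurations $y'$ (of positive $K(x,\cdot)$-mass) such that $z \in I_{y'^1}^m$ and $H_f^{y'}(\kappa_{y'}(z))$ is bounded below.

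The concrete construction: recall from Theorem \ref{theo:harrisok} that $\pi$ charges every open set, in particular a small ball around the regeneration point $u^* = (\frac{N-1}{N},\dots,\frac1N,0)$, or more simply any ball around a configuration $v$ whose first coordinate $v^1$ can be flowed forward to reach $z_0$. Rather than arguing through $\Delta_i$, I would instead use that after a spike of neuron $i \neq 1$, the first coordinate of $X_{T_n}$ equals $x^1 + a_K(x^1)$, which for $x^1 < K - \tfrac2N$ equals $x^1 + \tfrac1N$; and then the deterministic flow $\gamma_t$ moves this value monotonically toward $m$. So: pick $v^1$ so that $v^1 + \tfrac1N < m$ and $z_0 \in ]v^1+\tfrac1N, m[$ (if $z_0 < m$; symmetric choice if $z_0 > m$ — one can always arrange this by choosing $v^1$ small enough, using $0 < m < K$), and let $x$ range over a ball $B_{\rho}(v)$ with $\rho$ small. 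By Theorem \ref{theo:harrisok}, $\pi(B_\rho(v)) > 0$. For each such $x$, $K(x,dy')$ puts mass $f(x^j)/\bar f(x) \geq f_{min}(x^j)/(NF) > 0$ on $y' = \Delta_j(x)$ for any $j$ with $x^j > 0$ (choose $j \neq 1$; note $x^j$ near $v^j > 0$, which we can arrange). For $y' = \Delta_j(x)$ we have $y'^1 = x^1 + a_K(x^1)$, close to $v^1 + \tfrac1N < m$, so the interval $I_{y'^1}^m = ]y'^1, m[$ contains $z_0$ and a whole neighbourhood of it once $\rho$ is small. Thus $1_{I_{y'^1}^m}(z) = 1$ for $z$ in a fixed small interval $J \ni z_0$, uniformly over $x \in B_\rho(v)$ and uniformly over $f$. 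Finally $H_f^{y'}(\kappa_{y'}(z)) = \exp(-\int_0^{\kappa_{y'}(z)} \bar f(\gamma_s(y'))\,ds) \geq \exp(-\kappa_{y'}(z)\, N F)$, and $\kappa_{y'}(z)$, the time for the flow to go from $y'^1$ to $z$, is bounded by a constant depending only on $v^1, m, z_0, \lambda$ (it is continuous and finite since $z_0 \neq m$), so this is bounded below uniformly in $f \in H(\beta,F,L,f_{min})$.

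Putting these bounds into \eqref{eq:pi1dens} gives, for every $z \in J$,
$$
\pi_1(z) \geq \pi(B_\rho(v)) \cdot \frac{f_{min}(c_1)}{NF} \cdot e^{-c_2 N F} \cdot \frac{1}{\sup_{z\in J}|b(z)|} =: c_3 > 0,
$$
with $c_1, c_2, c_3$ independent of $f$, hence $\pi_1(B_\delta(y)) \geq \pi_1(J) \geq c_3 \cdot |J| > 0$, with a lower bound uniform in $f \in H(\beta,F,L,f_{min})$, which is exactly the claim. The main obstacle I anticipate is bookkeeping the two cases $z_0 < m$ versus $z_0 > m$ and making sure that, when $y$ itself is close to $0$ or close to $K$, one can still choose the auxiliary configuration $v$ and the spiking index $j$ so that the post-jump first coordinate $v^1 + \tfrac1N$ lies on the correct side of $m$ and can flow to a point inside $B_\delta(y)$ — this requires $\delta$ small handling and possibly using that $f_{min}$ is evaluated at a point bounded away from $0$; for $y$ very close to $K$ one may need the neuron being tracked to be the one that spiked (reset to $0$) and flow up from $0$, i.e. use the term in $K(x,dy')$ with $j=1$, for which $y'^1 = 0$ and $\gamma_t(0) = (1-e^{-\lambda t})m$ reaches any value in $]0,m[$; this covers all $z_0 < m$ in one stroke. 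For $z_0 > m$ one instead needs $y'^1 > z_0$, obtained by starting neuron $1$ high (near $K$) and not spiking it, which is available since $\pi$ charges configurations with $x^1$ near $K$.
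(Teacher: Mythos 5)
Your plan is a genuinely different route from the paper: you work directly with the one--step representation \eqref{eq:pi1dens} of $\pi_1$ and try to bound the integrand below on a neighbourhood of a target $z_0 \in B_\delta(y)$, whereas the paper constructs a multi-step event $A_y\cap S_y$ (neuron $1$ resets, flows up to a small level $s$, then gets bumped $k$ times by the others so that $y = \tfrac{k}{N}+s$), proves $\inf_{f}\inf_{x\in[0,K]^N}P_x(A_y\cap S_y)>0$, and only then integrates in $x$ against $\pi$. The decisive advantage of the paper's construction is that it works from \emph{every} starting configuration $x$, so the final integration uses nothing about the support of $\pi$ beyond $\pi([0,K]^N)=1$.

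Your argument, as written, has a gap at exactly this point. Early on you assert that ``recall from Theorem \ref{theo:harrisok} that $\pi$ charges every open set,'' but Theorem \ref{theo:harrisok} states positive Harris recurrence and the geometric total-variation bound; it does not say that $\pi$ charges every open subset of $[0,K]^N$, and that fact is not automatic for this degenerate PDMP. What is available from the regeneration argument (via \eqref{eq:regenunif} and Proposition \ref{prop:control}) is only that $\pi$ charges a ball around the specific configuration $u^*$. When you then ``pick $v^1$ so that $v^1+\tfrac1N<m$'' or, for $z_0>m$, claim that ``$\pi$ charges configurations with $x^1$ near $K$,'' you are invoking lower bounds on the support of $\pi$ that the paper never establishes separately --- and which, projected to the first coordinate, are precisely what Proposition \ref{prop:suptrans} is supposed to deliver. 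So the $z_0>m$ branch is circular: the one--jump kernel $K(x,dy')$ can only send $y'^1$ to $0$ or to $x^1+a_K(x^1)\le x^1+\tfrac1N$, and the flow then moves $y'^1$ monotonically toward $m$; to reach $z_0>m$ inside $I_{y'^1}^m$ you must already have $x^1+a_K(x^1)>z_0$ with $\pi$--positive probability, which for $z_0$ close to $K$ is exactly the statement being proved. (The $z_0<m$ branch via $j=1$, $y'^1=0$ is essentially fine, since $\gamma_t(0)$ sweeps $]0,m[$ and you only need $\pi(\{x^1>\ge\})>0$ for some $\ge>0$, which does follow from stationarity and the fact that $X^1$ spends zero Lebesgue time at $0$.) To repair the proof you would either have to establish the support property of $\pi$ directly --- in effect by a multi-step path construction like the paper's --- or replace the one-step formula \eqref{eq:pi1dens} by an $n$-step version so that repeated $\tfrac1N$-increments can carry coordinate $1$ above $m$; at that point you have rediscovered the paper's argument.
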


\begin{proof}
Fix $y \in ]0,K[$ and let $k \in \N$ and $s \in [0,\frac{1}{N}[$ be such that $y=\frac{k}{N}+s.$

We define the time $t_s$ such that $m \left( 1-e^{-\lambda t_s} \right) = s$ and consider, for a fixed $\varepsilon > 0,$ the following events:

$$
 A_y = \left\{ \frac{\ge}{2}  < T_1 <  \ge ; t_s +(i-1)\ge < T_i < t_s + (i-1) \ge + \frac{\ge}{2}  \; \forall i = 2 , \ldots, k +1 \right\} 
$$
and
$$
S_y= \{ I_1 =1, I_2 \neq 1 , \ldots , I_{k+1} \neq 1  \}.
$$
The idea of the proof is that the event $A_y \cap S_y$ leads the neuron 1 to a position close to $y$ after a time $t_y:=t_s + k \ge:$

At time $T_1$ the neuron 1 jumps so that its position is reset to $0,$ the time $t_s$ is defined such that at time $T_{2-}$ the position of neuron 1 is close to $s,$ then in an interval of time short enough for the deterministic drift to be insignificant, we impose that the other neurons jump $k$ times  so that at time $T_{k+1},$ the position of neuron 1 is indeed close to $y.$

In other words we can use similar arguments to the ones used in the proof of Lemma \ref{lem:positionbresiliens} to obtain that, for all $x \in [0,K]^N,$ if $ X_0 = x,$ then on the event $ A_y \cap S_y , $ we have $X_{T_{k+1}}^1=y+O(\ge),$ and we can choose $\ge$ such that $X_{T_{k+1}}^1 \in B_\delta(y).$

Now we have to prove that 
$$\inf_{f \in H( \beta , F, L, f_{min} )}  \; \inf_{ x \in [0,K]^N } P_x (A_y \cap S_y ) > 0 ,$$
which can be done as in the proof of proposition \ref{prop:control}.

Finally, integrating this result against the measure $\pi$ gives us the conclusion of the proof.
\end{proof}

We can now obtain \eqref{eq:noloss} as corollary of the following Proposition.

\begin{prop}\label{prop:imlb}
We have that
\begin{equation}\label{eq:imlb}
r^* := \inf_{a \in S_{d , \beta }} \;  \inf_{f \in H( \beta , F, L, f_{min} )}  \; \pi_1 (a) > 0,
\end{equation}
and for all $x \in [0,K]^N,$and for all $r \le r^*, $ 
\begin{equation}\label{eq:imdlb}
\lim \inf_{t \to \infty } \inf_{f \in H( \beta , F, L, f_{min} ) } P^f_x (A_{t,r} ) = 1.
\end{equation}

\end{prop}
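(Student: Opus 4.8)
The plan is to deduce \eqref{eq:imlb} from Proposition \ref{prop:suptrans} together with the uniform regularity of $\pi_1$ on $S_{d,\beta}$ provided by Theorem \ref{theo:invmeasure}, and then to obtain \eqref{eq:imdlb} from \eqref{eq:imlb} by combining the ergodic theorem with the uniform speed of convergence \eqref{eq:ergodic}. For the first part, I would argue by contradiction: suppose $r^* = 0$. Then there are sequences $a_n \in S_{d,\beta}$ and $f_n \in H(\beta,F,L,f_{min})$ with $\pi_1^{f_n}(a_n) \to 0$, where I write $\pi_1^{f}$ for the invariant density of a single neuron under rate $f$. Since $a_n$ lives in the relatively compact set $\overline{S_{d,\beta}}$, we may pass to a subsequence so that $a_n \to a_\infty$, necessarily in $\overline{S_{d,\beta}}$, and in fact in $S_{d/2,\beta}$ for $n$ large. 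By Theorem \ref{theo:invmeasure}, for $d$ replaced by $d/2$, the densities $\pi_1^{f_n}$ are uniformly bounded with a uniform modulus of continuity on $S_{d/2,\beta}$ (their $C^k$ norms and $\alpha$-Hölder seminorms of the $k$-th derivative are bounded by a constant $C_F$ independent of $f$). Hence by Arzelà-Ascoli a further subsequence of $\pi_1^{f_n}$ converges uniformly on compact subsets of $S_{d/2,\beta}$ to some continuous function $\pi_\infty \geq 0$, and $\pi_\infty(a_\infty) = \lim \pi_1^{f_n}(a_n) = 0$. On the other hand, Proposition \ref{prop:suptrans} gives, for every $\delta>0$, $\inf_{f} \pi_1^{f}(B_\delta(a_\infty)) =: c_\delta > 0$, so $\int_{B_\delta(a_\infty)} \pi_\infty(z)\,dz \geq c_\delta > 0$ by Fatou, for all small $\delta$. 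This forces $\pi_\infty$ to be strictly positive somewhere in every neighborhood of $a_\infty$; by continuity of $\pi_\infty$ and the fact that $\pi_\infty(a_\infty)=0$ this is not yet an immediate contradiction, so I would instead localize: choosing $\delta$ with $\overline{B_\delta(a_\infty)} \subset S_{d/2,\beta}$ and using the uniform modulus of continuity, $\pi_1^{f_n}(z) \leq \pi_1^{f_n}(a_n) + L|z-a_n|^\alpha + (\text{lower order})$ on $B_\delta(a_n)$, which would make $\int_{B_{\delta'}(a_n)} \pi_1^{f_n} \to 0$ as first $n\to\infty$ and then $\delta'\to 0$, contradicting $\inf_f \pi_1^f(B_{\delta'}(a_n)) \geq c_{\delta'}>0$ once $\delta'$ is fixed small (note $a_n \to a_\infty$ so the balls $B_{\delta'}(a_n)$ are eventually comparable to $B_{\delta'/2}(a_\infty)$ or $B_{2\delta'}(a_\infty)$). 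This yields $r^*>0$.

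For \eqref{eq:imdlb}, fix $r \leq r^*$ and note that $\frac{1}{Nt}\int_0^t\int_\R Q_h(y-a)\,\eta(ds,dy) = \frac{1}{Nt}\int_0^t \sum_{i=1}^N Q_h(X_s^i - a)\,ds$, which is the time-average of the additive functional $\frac1N\sum_i Q_h(\cdot^i - a)$. By the exchangeability of $\pi$ and the ergodic theorem (positive Harris recurrence, Theorem \ref{theo:harrisok}), this converges $P_x^f$-almost surely to $\int_{[0,K]} Q_h(z-a)\,\pi_1^f(dz) = \pi_1^f(Q_h(\cdot-a))$. Since $Q$ is a kernel of order $\lfloor\beta\rfloor$ and $\pi_1^f \in C^k(S_{d,\beta})$ with the uniform bounds \eqref{eq:imctrl}, a Taylor expansion gives $\pi_1^f(Q_h(\cdot-a)) = \pi_1^f(a) + O(h^\beta)$ uniformly in $f$ and in $a \in S_{d,\beta}$, provided $hR$ is small enough that $B_{hR}(a) \subset S_{d/2,\beta}$; hence for $h$ small this quantity is $\geq \pi_1^f(a) - Ch^\beta \geq r^* - Ch^\beta \geq r' > r$ for an appropriate $r'$, uniformly in $f$. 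Thus the limiting value of $\frac{1}{Nt}\int_0^t\int Q_h(y-a)\eta(ds,dy)$ exceeds $r$ uniformly, and the only remaining issue is to upgrade the a.s. convergence to a uniform-in-$f$ statement about $P_x^f(A_{t,r})$. Here the clean route is a quantitative one: write the deviation as a sum of a martingale term and a bias term via the identity $\frac1t\int_0^t \varphi(X_s)ds - \pi(\varphi) = \frac1t\int_0^t (P_s\varphi(x) - \pi(\varphi))ds + \frac1t(\text{martingale})$, bound the first term by $\int_0^\infty \|P_s(x,\cdot)-\pi\|_{TV}\,ds \cdot \|\varphi\|_\infty / t = O(1/t)$ uniformly in $f$ by \eqref{eq:ergodic}, and control the martingale by a second-moment estimate using $\|\varphi\|_\infty \leq \|Q\|_\infty/h$, giving $\mathbb{E}_x^f[(\frac1t(\cdots))^2] = O(1/(th^2))$; then Chebyshev gives $P_x^f(A_{t,r}^c) = O(1/(th^2)) \to 0$ uniformly in $f$, which is \eqref{eq:imdlb}.

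I expect the main obstacle to be the first part — making the contradiction argument for $r^*>0$ fully rigorous — because it requires combining the purely qualitative positivity statement of Proposition \ref{prop:suptrans} (valid for each $f$ but a priori with no uniform quantitative content near a point) with the uniform regularity from Theorem \ref{theo:invmeasure}; the delicate point is that $\pi_1^f$ is controlled uniformly only on $S_{d',\beta}$ for $d'>d$, so one must keep the point $a$ (and its perturbations $a_n$) at a definite distance from the bad set $\{m\} \cup \{0\} \cup \{K\}$ and from the boundary thresholds, and argue that the uniform Hölder modulus forces the $\delta$-mass of $\pi_1^f$ near $a$ to vanish if $\pi_1^f(a)$ does — which is exactly what contradicts Proposition \ref{prop:suptrans}. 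The second part is more routine, the only mild care being the quantitative coupling/martingale bound that makes the convergence uniform over the Hölder class, for which the inputs \eqref{eq:ergodic} and the occupation-measure structure are already in place.
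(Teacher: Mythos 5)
Your plan for \eqref{eq:imlb} — soft compactness via Arzel\`a--Ascoli plus a contradiction with Proposition \ref{prop:suptrans} — has a genuine gap, and it is not one that can be patched without essentially importing the paper's actual argument. After passing to a uniformly convergent subsequence $\pi_1^{f_n}\to\pi_\infty$ on a compact neighbourhood of $a_\infty$ and noting $\pi_\infty(a_\infty)=0$, the localization step gives, for fixed small $\delta'$ and $n$ large,
$$\pi_1^{f_n}\bigl(B_{\delta'}(a_n)\bigr)\;\le\;2\delta'\,\pi_1^{f_n}(a_n)+C_F\,(\delta')^{\,1+(1\wedge\beta)}\;\xrightarrow[n\to\infty]{}\;C_F\,(\delta')^{\,1+(1\wedge\beta)},$$
while Proposition \ref{prop:suptrans} and $B_{\delta'}(a_n)\supset B_{\delta'/2}(a_\infty)$ give the lower bound $c_{\delta'/2}:=\inf_f\pi_1^f(B_{\delta'/2}(a_\infty))>0$. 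The conclusion of your chain is therefore $c_{\delta'/2}\le C_F(\delta')^{1+(1\wedge\beta)}$ for all small $\delta'$, which is \emph{not} a contradiction: Proposition \ref{prop:suptrans} is purely qualitative and provides no lower bound on the rate at which $c_\delta$ may vanish as $\delta\to 0$. Indeed, tracing the proof of Proposition \ref{prop:suptrans} (which sandwiches $k+1$ jump times into windows of length $\sim\varepsilon\sim\delta$) shows $c_\delta$ is only guaranteed to be of order $\delta^{k+1}$, typically much smaller than $\delta^{1+(1\wedge\beta)}$. Letting $\delta'\to 0$ kills both sides, so "first $n\to\infty$, then $\delta'\to 0$" never produces the contradiction you announce. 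What the paper actually does is avoid the soft route entirely: it bounds $\pi_1(a)$ from below directly via the explicit representation \eqref{eq:pi1dens}, using $f_{min}\le f\le F$ to control the survival factor $H_f^y(\kappa_y(a))$ and the boundedness of $\kappa_y(a)$ on $\{y:a\in I_y^m\}\cap S_{d,\beta}$ to keep that factor uniformly away from $0$; Proposition \ref{prop:suptrans} is then only needed to ensure that $\{y:a\in I_y^m\}$ carries positive mass, not to control the density pointwise. If you insist on an abstract compactness argument, you would still need a pointwise positivity statement $\pi_1^f(a)>0$ for each single $f$ (so that $\pi_\infty(a_\infty)=\pi_1^{f_\infty}(a_\infty)>0$), and that single-$f$ positivity again requires the formula \eqref{eq:pi1dens}.

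For \eqref{eq:imdlb}, your quantitative route (Poisson equation or coupling plus Chebyshev, relying on the uniform ergodicity \eqref{eq:ergodic}) is in fact more careful than the paper's one-line appeal to the ergodic theorem, and is the right way to get uniformity in $f$. However the stated variance bound $O(1/(th^2))$ is too crude to close the proof in general: with $h=h_t=t^{-1/(2\beta+1)}$, $th^2\to\infty$ only for $\beta>1/2$. The correct order is $O(1/(th))$, obtained by estimating $|\mathrm{Cov}_\pi(\Phi(X_s),\Phi(X_u))|\le E_\pi|\Phi(X_s)|\cdot\|P_{|u-s|}\Phi-\pi(\Phi)\|_\infty\lesssim h^{-1}\kappa^{-|u-s|}$ rather than the blunter $\|\Phi\|_\infty^2\kappa^{-|u-s|}$; the extra factor $h$ comes from $E_\pi|\Phi|=O(1)$ while $\|\Phi\|_\infty=O(h^{-1})$. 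With the sharper bound $O(1/(th))$ your Chebyshev step gives $P^f_x(A_{t,r}^c)\to 0$ uniformly in $f$ for any $\beta>0$, which is what the proposition needs.
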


\begin{proof}
Recalling the construction of $ \pi_1$ in (\ref{eq:pi1dens}), we have 
\begin{multline*}
\pi_1(a)= \int \pi ( dx) \int K(x, dy ) H_f^y (\kappa_y   ( a) ) \frac{1_{I_y^m}(a)}{|b ( a)| } \\
= \int \pi ( dx) \int \left( \sum_{i=1}^N f(x^i) \delta_{\Delta^i(x)}(dy) \right) \exp \left( -\int_0^{\kappa_y(a)} \bar f (\gamma_u(x))du \right) \frac{1_{I_y^m}(a)}{|b ( a)| }.
\end{multline*}
To obtain a lower bound uniform in $f$ of this expression we use again the bounds of the class of function $H( \beta , F, L, f_{min} ):$
$$
\forall f \in H( \beta , F, L, f_{min} ), \; f_{min}(x) \leq f(x) \leq F.
$$
Doing this, we will also need an upper bound for $\kappa_y(a).$ This is possible due to the term $1_{I_y^m}(a):$ since $y$ is such that $a \in I_y^m \cap S_{d , \beta },$ the flow starting from $y$ can reach $a$ in a finite time, even if we consider the worst cases where $y=0$ or $K.$

Thanks to Proposition  \ref{prop:suptrans}, we have $\pi_1 \left( \left\{ y: a \in I_y^m \right\} \right) >0$ implying that the integration of $1_{I_y^m}(a)$ against the measure $\pi_1(dy)$ is not 0. Finally, due to the definition of $S_{d , \beta },$  we have no problem to obtain this lower bound uniformly in $a \in S_{d , \beta },$ and this finishes the proof of \eqref{eq:imlb}.

\eqref{eq:imdlb} is obtained easily from \eqref{eq:imlb} thanks to the ergodic theorem: we have

$$
\lim_{t \to + \infty} \frac{1}{Nt} \int_0^t \int_\R Q_h (y-a) \eta(ds,dy) = \pi_1(a),
$$
(recall that $\int_\R Q(x)dx=1$), which concludes the proof.
\end{proof}

\section{proof of theorem \ref{theo:main}}\label{sec:proofmain}

\subsection{Convergence of the estimator}
We now study the speed of convergence of our estimator. First we have the following classical kernel approximation:

\begin{prop}\label{prop:KernApprox}
For any H\"older function $g$ of order $\beta=k + \alpha$ satisfying 
\begin{equation}\label{eq:holderg}
\sup_{w \neq w'} \big| g^{(k)} (w) - g^{(k)} (w') \big| \le C_g |w-w'|^\alpha 
\end{equation}  for some constant $C_g$ and for a kernel $Q$ as in Theorem \ref{theo:main}, we have:
$$
\left| \int_\R Q_h (y-a) g(y) dy - g(a) \right| \leq  \frac{C_g \parallel Q \parallel_{L^1} R^\beta}{k!} h^{\beta}, 
$$
where we recall that $R$ is the diameter of the support of $Q$ and where $\parallel Q \parallel_{L^1}$ denotes the $L^1$-norm of $Q.$
\end{prop}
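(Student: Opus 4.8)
This is the standard Taylor-expansion argument for kernel estimators. The plan is to use the moment conditions on $Q$ — namely $\int_\R Q(y)\,dy = 1$ and $\int_\R Q(y) y^j\,dy = 0$ for $1 \le j \le k = \lfloor\beta\rfloor$ — to kill the low-order terms of the Taylor expansion of $g$ around $a$, leaving a remainder controlled by the H\"older modulus of $g^{(k)}$.

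\textbf{Step 1: Reduce to an integral of the remainder.} First I would write, using the substitution $u = (y-a)/h$ so that $Q_h(y-a)\,dy = Q(u)\,du$,
\[
\int_\R Q_h(y-a) g(y)\,dy - g(a) = \int_\R Q(u)\bigl[ g(a + h u) - g(a) \bigr] du,
\]
where I have used $\int_\R Q(u)\,du = 1$ to insert the $-g(a)$ inside the integral.

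\textbf{Step 2: Taylor expand with integral remainder.} For $|u| \le R$ (outside this $Q$ vanishes), write the Taylor expansion of $g$ at $a$ to order $k$ with the Lagrange/integral form of the remainder:
\[
g(a+hu) = g(a) + \sum_{j=1}^{k} \frac{g^{(j)}(a)}{j!} (hu)^j + \frac{(hu)^k}{k!}\bigl[ g^{(k)}(\xi_{u}) - g^{(k)}(a) \bigr]
\]
for some $\xi_u$ between $a$ and $a+hu$ (I would phrase this as: write the order-$k$ Taylor polynomial with the $g^{(k)}(a)$ coefficient, then the remainder is $\frac{(hu)^k}{k!}(g^{(k)}(\xi_u) - g^{(k)}(a))$). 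Plugging into Step 1, each monomial term $\int_\R Q(u) u^j\,du$ vanishes for $1 \le j \le k$ by the hypothesis on $Q$, so only the remainder survives:
\[
\int_\R Q_h(y-a) g(y)\,dy - g(a) = \frac{h^k}{k!}\int_\R Q(u) u^k \bigl[ g^{(k)}(\xi_u) - g^{(k)}(a) \bigr] du.
\]

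\textbf{Step 3: Bound the remainder.} Since $|\xi_u - a| \le h|u| \le hR$, the H\"older condition \eqref{eq:holderg} gives $|g^{(k)}(\xi_u) - g^{(k)}(a)| \le C_g (h|u|)^\alpha \le C_g (hR)^\alpha$. Therefore
\[
\left| \int_\R Q_h(y-a) g(y)\,dy - g(a) \right| \le \frac{h^k}{k!} C_g (hR)^\alpha \int_\R |Q(u)|\, |u|^k\, du \le \frac{C_g R^\beta \|Q\|_{L^1}}{k!} h^{k+\alpha},
\]
using $|u|^k \le R^k$ on the support of $Q$ and $\beta = k + \alpha$. This is exactly the claimed bound. (Here the hypothesis $\int_\R |u|^\beta Q(u)\,du < \infty$ from Theorem \ref{theo:main} is what legitimizes all the integrals; with $Q$ of compact support it is automatic.)

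\textbf{Main obstacle.} There is no real obstacle — this is a routine computation. The only point requiring a little care is bookkeeping the Taylor remainder so that the surviving term genuinely involves the \emph{difference} $g^{(k)}(\xi_u) - g^{(k)}(a)$ rather than $g^{(k)}(\xi_u)$ alone; this is what lets the H\"older modulus (rather than just a bound on $g^{(k)}$) enter and produces the sharp exponent $\beta$ instead of merely $k$. One subtlety worth a remark: $g$ is only assumed H\"older on $[0,K]$, so one should note that for $h \le h_0$ the relevant points $a + hu$ with $|u|\le R$ stay inside the set where the regularity holds — but in the application $a \in S_{d,\beta}$ and $h \le h_0$ are chosen precisely so that $B_{hR}(a) \subset S_{d/2,\beta}$, so this is covered.
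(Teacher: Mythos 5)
Your proof is correct and follows essentially the same route as the paper: change of variables, Taylor--Lagrange expansion to order $k$ with the remainder written so that $g^{(k)}(\xi_u)-g^{(k)}(a)$ appears, moment conditions on $Q$ to annihilate the polynomial terms, then the H\"older bound together with $|u|\le R$ on the support. The only cosmetic difference is that you make explicit the remark about staying inside the set where the H\"older regularity holds, which the paper leaves implicit.
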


\begin{proof}
Using the property $\int_\R Q(x)dx =1$ and the change of variable $x=\frac{y-a}{h},$ we obtain
$$
\int_\R Q_h (y-a) g(y) dy - g(a) = \int_\R Q(x) \left( g(a+xh) - g(a) \right) dx.
$$
Then, a Taylor-Lagrange expansion of the function $g$ gives us
$$
\int_\R Q_h (y-a) g(y) dy - g(a) = \int_\R Q(x) \left( \sum_{l=1}^k \frac{g^{(l)}(a)}{l!} (xh)^l + \frac{g^{(k)}(z)-g^{(k)}(a)}{k!}(xh)^k \right) dx,
$$
for some $z \in ]a,a+xh[ \cup ]a + xh , a[ .$ By the assumptions of Theorem \ref{theo:main}, $\int_{\R}  Q(y) y^j dy = 0 $ for all $ 1 \le j \le k.$ Then condition (\ref{eq:holderg}) allows to conclude.
\end{proof}

Fix $a \in S_{d , \beta } $ and define, for all  $t \in \R^+, \; \tilde \mu = \mu - \hat \mu$ the centered jump measure.

\begin{prop}\label{prop:ernum}
Under the conditions of Theorem \ref{theo:main}, there exists a constant $C_1$  depending only on $\beta ,  F, L, N, f_{min}$ and $Q,$ such that for all $f \in H( \beta , F, L, f_{min} ), $ for all $x \in [0,K]$ and for a bandwidth of the form $h  = h_t = t^{- \alpha } $ for some $ 0 < \alpha < 1,$ 
\begin{equation}\label{eq:ernumx}
E_x \left[ \left( \frac{1}{Nt} \int_{ [0, t]} \int_\R Q_h (y-a) \tilde  \mu (ds, dy)  \right)^2 \right] \leq \frac{C_1}{ht}.
\end{equation}
\end{prop}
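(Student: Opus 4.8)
The plan is to exploit the fact that $\tilde\mu = \mu - \hat\mu$ is (the differential of) a purely discontinuous locally square-integrable martingale, so that the second moment of the stochastic integral against $\tilde\mu$ equals the expectation of its predictable quadratic variation. Concretely, set
$$
M_t := \int_{[0,t]}\int_\R Q_h(y-a)\,\tilde\mu(ds,dy),
$$
which is a martingale under $P_x$ because $\hat\mu(ds,dy) = f(y)\eta(ds,dy)$ is the compensator of $\mu$. Its angle bracket is obtained from the compensator of the sum of squared jumps: since at a jump time $T_n^i$ the jump of $M$ has size $Q_h(X^i_{T_n^i-}-a)$, we get
$$
\langle M\rangle_t = \int_{[0,t]}\int_\R Q_h^2(y-a)\,\hat\mu(ds,dy)
= \int_0^t \sum_{i=1}^N Q_h^2(X^i_s-a)\,f(X^i_s)\,ds .
$$
Hence $E_x[M_t^2] = E_x[\langle M\rangle_t]$, and dividing by $(Nt)^2$ reduces the claim to showing
$$
\frac{1}{N^2 t^2}\,E_x\!\left[\int_0^t \sum_{i=1}^N Q_h^2(X^i_s-a)\,f(X^i_s)\,ds\right] \le \frac{C_1}{ht}.
$$

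To bound the right-hand side I would first use $f\le F$ on $[0,K]$ (valid for all $f\in H(\beta,F,L,f_{min})$), reducing matters to controlling $E_x\big[\int_0^t \sum_i Q_h^2(X^i_s-a)\,ds\big]$. The key scaling observation is that $Q_h(z) = h^{-1}Q(z/h)$, so $Q_h^2(z) = h^{-2}Q^2(z/h)$ and $\int_\R Q_h^2(z)\,dz = h^{-1}\int Q^2(y)\,dy$; thus, when the time-average of the occupation measure is close to its stationary value $\pi_1$, each summand contributes order $h^{-1}$ rather than $h^{-2}$. Formally, I would write, for each fixed $i$,
$$
E_x\!\left[\int_0^t Q_h^2(X^i_s-a)\,ds\right]
= \int_0^t E_x\big[Q_h^2(X^i_s-a)\big]\,ds
= \int_0^t \int_{[0,K]} Q_h^2(w-a)\,P_s(x,\cdot)\!\circ\!(X^i)^{-1}(dw)\,ds,
$$
and then split $P_s(x,\cdot)\circ(X^i)^{-1} = \pi_1 + (P_s(x,\cdot)\circ(X^i)^{-1}-\pi_1)$. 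For the stationary part, $\int Q_h^2(w-a)\,\pi_1(dw) \le \|\pi^1\|_{\infty,S_{d/2,\beta}}\int_{B_{hR}(a)} Q_h^2(w-a)\,dw = \|\pi^1\|_\infty h^{-1}\int Q^2$, using Theorem \ref{theo:invmeasure} and the fact that $a\in S_{d,\beta}$ with $h\le h_0$ guarantees $B_{hR}(a)\subset S_{d/2,\beta}$ where the density is bounded uniformly in $f$. For the error part, I bound $\big|\int Q_h^2(w-a)\,(P_s(x,\cdot)\circ(X^i)^{-1}-\pi_1)(dw)\big| \le \|Q_h^2\|_\infty \cdot 2\|P_s(x,\cdot)-\pi\|_{TV} \le h^{-2}\|Q\|_\infty^2 \cdot 2C\kappa^{-s}$ by Theorem \ref{theo:harrisok}, and $\int_0^\infty 2C\kappa^{-s}\,ds = 2C/\ln\kappa < \infty$. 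Summing over $i$ gives $N$ copies, and dividing by $N^2t^2$ yields
$$
E_x[M_t^2] \le \frac{F}{N^2 t^2}\Big( N t\,\|\pi^1\|_\infty h^{-1}\!\int Q^2 + N h^{-2}\|Q\|_\infty^2\, \tfrac{2C}{\ln\kappa}\Big) \le \frac{C_1'}{ht} + \frac{C_1''}{h^2 t^2}.
$$

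The final step is to absorb the second term into the first: since $h = h_t = t^{-\alpha}$ with $0<\alpha<1$, we have $h^{-2}t^{-2} = h^{-1}t^{-1}\cdot(ht)^{-1} = h^{-1}t^{-1}\cdot t^{\alpha-1} \le h^{-1}t^{-1}$ for $t\ge 1$, so both terms are $O(1/(ht))$, giving the claimed bound with a constant $C_1$ depending only on $\beta, F, L, N, f_{min}$ and $Q$ (through $\|Q\|_\infty$, $\|Q\|_{L^1}$, $\int Q^2$, $R$, the constants $C,\kappa$ from Theorem \ref{theo:harrisok}, and $\|\pi^1\|_\infty$ from Theorem \ref{theo:invmeasure}, all of which are uniform over the Hölder class). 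For small $t$ one notes the estimator-relevant quantities are trivially bounded, or one simply enlarges $C_1$.

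The main obstacle I anticipate is the careful bookkeeping of uniformity in $f\in H(\beta,F,L,f_{min})$: one must make sure that the bound on the stationary density (Theorem \ref{theo:invmeasure}) and the ergodicity constants $C,\kappa$ (Theorem \ref{theo:harrisok}) are genuinely independent of $f$, and that the set inclusion $B_{hR}(a)\subset S_{d/2,\beta}$ holds for all $h\le h_0$ so that the density estimate applies on the whole support of $Q_h(\cdot-a)$. A secondary technical point is justifying the identity $E_x[M_t^2]=E_x[\langle M\rangle_t]$, which requires $M$ to be a true (not merely local) square-integrable martingale; this follows from boundedness of $Q$ and of the jump rate on the compact state space, so that $\langle M\rangle_t$ has finite expectation — this should be remarked but not belabored.
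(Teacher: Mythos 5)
Your proof is correct and follows essentially the same route as the paper: the Itô isometry reduces the squared $L^2$-norm to $E_x[\langle M\rangle_t]$, the stationary contribution is bounded via the invariant density of Theorem~\ref{theo:invmeasure} (using that $h\le h_0$ guarantees $B_{hR}(a)\subset S_{d/2,\beta}$ where $\pi^1$ is bounded uniformly in $f$), and the transient error is controlled by the uniform exponential ergodicity of Theorem~\ref{theo:harrisok}. The only cosmetic difference is that the paper splits the time integral at $t^p$ with $p\in ]0,1-\alpha[$, whereas you integrate the exponential TV decay directly over $[0,\infty)$ to get a constant and then absorb the resulting $O(h^{-2}t^{-2})$ term into $O((ht)^{-1})$ via $h=t^{-\alpha}$, $\alpha<1$; both arguments succeed for the same reason.
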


\begin{proof}
We start working under the invariant regime in the first part of the proof, \textit{i.e.}\ we will work under $ E_\pi .$   In a second time we will use Theorem \ref{theo:harrisok} to obtain the result for any starting point $x \in [0,K]^N.$ 

We use the properties of the compensator $\hat \mu_t$ and its explicit expression to write
\begin{multline*}
E_{\pi} \left[ \left( \frac{1}{Nt} \int_{[0, t ]}\int_\R Q_h (y-a) \tilde \mu(ds, dy)  \right)^2 \right] 
 = \frac{1}{(Nt)^2} E_{\pi} \left[ \int_0^t \int_\R \left( Q_h (y-a) \right)^2 \hat \mu(ds, dy) \right] \\ 
 = \frac{1}{(Nt)^2} E_{\pi} \left[ \int_0^t \int_\R \left( Q_h (y-a) \right)^2 f(y) \eta(ds,dy)  \right].
\end{multline*}

Now, since we are in the invariant regime, we can use 
the density of the invariant measure of a single particle (recall Theorem \ref{theo:invmeasure}) to obtain
$$
E_{\pi} \left[ \left( \frac{1}{Nt} \int_{[0, t ]} \int_\R Q_h (y-a) \tilde \mu(ds, dy)  \right)^2 \right] = \frac{1}{Nt}  \int_\R \left( Q_h (y-a) \right)^2 f(y) \pi_1 (y)dy. 
$$
Our aim is to obtain a control of $ \int_\R  h f(y) ( Q_h(y -a))^2 \pi_1(y)dy$ independently of $h.$ To do this we use the change of variable $x=\frac{y-a}{h}$ and write
$$
E_{\pi} \left[ \left( \frac{1}{Nt} \int_{[0, t ]} \int_\R Q_h (y-a) \tilde \mu(ds, dy)  \right)^2 \right] \\
= \frac{1}{Nht}  \int_\R Q^2 (x) f(a+xh) \pi_1 (a+xh)dx .
$$
This yields 
\begin{equation}\label{eq:numinvreg}
E_{\pi} \left[ \left( \frac{1}{Nt} \int_{[0, t]}\int_\R Q_h (y-a) \tilde \mu(ds, dy)  \right)^2 \right] \leq \frac{F}{Nht} \| Q \|_{L^2 }^2 \sup_{ x \in S_{d/2 , k }} \pi_1 ( x).
\end{equation}
This result holds in stationary regime, but thanks to the exponential speed of convergence of Theorem \ref{theo:harrisok}, we can obtain it for any starting point $x \in [0,K]^N$ as we are going to show now. For that sake we fix the bandwidth $h$ in function of $t$ so that this speed of convergence depends only on $t.$ For the moment, we will assume that $h$ is of the form
\begin{equation}\label{eq:htmalpha}
h_t:=t^{-\alpha}
\end{equation}
for some constant $\alpha \in ]0,1[.$ As in the beginning of the proof, we can write
$$
E_x \left[ \left( \frac{1}{Nt} \int_{[0, t ]}\int_\R Q_h (y-a) \tilde  \mu(ds, dy)  \right)^2 \right] = \frac{1}{(Nt)^2} E_x \left[ \int_0^t \int_\R \left( Q_h (y-a) \right)^2 f(y) \eta(ds,dy)  \right].
$$
Now, we have the following decomposition
\begin{multline*}
E_x \left[ \left( \frac{1}{Nt} \int_{[0, t ]}\int_\R Q_h (y-a) \tilde  \mu(ds, dy)  \right)^2 \right] \\
= \frac{1}{(N t)^2} E_x \left[ \sum_{i=1}^N \int_0^t \left( Q_h (X_s^i-a) \right)^2 f(X_s^i) ds  \right] - \frac{1}{(N t)^2} E_{\pi} \left[ \sum_{i=1}^N \int_0^t \left( Q_h (X_s^i-a) \right)^2 f(X_s^i) ds  \right] \\
 + \frac{1}{(N t)^2} E_{\pi} \left[ \sum_{i=1}^N \int_0^t \left( Q_h (X_s^i-a) \right)^2 f(X_s^i) ds  \right].
\end{multline*}
The last term is controlled by \eqref{eq:numinvreg}. We will deal with the difference in the second line using Theorem \ref{theo:harrisok} as follows: for all $p \in ]0,1-\alpha[,$ we have
\begin{multline*}
\frac{1}{(N t)^2} E_x \left[ \sum_{i=1}^N \int_0^t \left( Q_h (X_s^i-a) \right)^2 f(X_s^i) ds  \right] - \frac{1}{(N t)^2} E_{\pi} \left[ \sum_{i=1}^N \int_0^t \left( Q_h (X_s^i-a) \right)^2 f(X_s^i) ds  \right] \\
=  \frac{1}{(N t)^2} E_x \left[ \sum_{i=1}^N \int_0^{t^p} \left( Q_h (X_s^i-a) \right)^2 f(X_s^i) ds  \right] - \frac{1}{(N t)^2} E_{\pi} \left[ \sum_{i=1}^N \int_0^{t^p} \left( Q_h (X_s^i-a) \right)^2 f(X_s^i) ds  \right] \\
+ \frac{1}{(N t)^2} \sum_{i=1}^N \int_{t^p}^t \Big( E_x \left[ \left( Q_h (X_s^i-a) \right)^2 f(X_s^i) \right] -  E_{\pi} \left[ \left( Q_h (X_s^i-a) \right)^2 f(X_s^i) \right] \Big) ds.
\end{multline*}
To conclude, we use the upper bounds $\parallel Q \parallel_\infty$ and $F$ for $Q$ and $f$ to control the second line and we use Theorem \ref{theo:harrisok} to control the last term. As a consequence, 
\begin{multline*}
\left| \frac{1}{(N t)^2} E_x \left[ \sum_{i=1}^N \int_0^t \left( Q_h (X_s^i-a) \right)^2 f(X_s^i) ds  \right] - \frac{1}{(N t)^2} E_{\pi} \left[ \sum_{i=1}^N \int_0^t \left( Q_h (X_s^i-a) \right)^2 f(X_s^i) ds  \right] \right| \\
\leq \frac{F \parallel Q \parallel_\infty^2}{Nh^2 t^2} \left( 2 t^p + C \int_{t^p}^t \kappa^{-s} ds \right) = \frac{F \parallel Q \parallel_\infty^2}{Nh^2 t^2} \left( 2 t^p + C \frac{\kappa^{-t^p}-\kappa^{-t}}{\ln (\kappa)} \right) = \frac{1}{ht} {\mathcal O} \left( \frac{t^p}{ht} \right) .
\end{multline*}
Now recall that $h=h_t=t^{-\alpha}$ by (\ref{eq:htmalpha}) and that $p \in ]0,1-\alpha[.$ Thus
$$
\frac{1}{(N t)^2} E_x \left[ \sum_{i=1}^N \int_0^t \left( Q_h (X_s^i-a) \right)^2 f(X_s^i) ds  \right] - \frac{1}{(N t)^2} E_{\pi} \left[ \sum_{i=1}^N \int_0^t \left( Q_h (X_s^i-a) \right)^2 f(X_s^i) ds  \right] = o \left( \frac{1}{ht} \right),
$$
which allows to conclude.
\end{proof}

Proposition \ref{prop:ernum} will help us to control the numerator of our estimator. We want to establish the same kind of result for the denominator and this leads to the following proposition:

\begin{prop}\label{prop:erden}
For all $a \in S_{d , \beta },$ define
\begin{equation}\label{eq:defQtilde}
\tilde Q_{h,f}(y):=Q_h ( y - a ) \Big( f(y)-f(a) \Big)  - \pi_1 \Big( Q_h ( \cdot - a ) \Big( f(\cdot)-f(a) \Big) \Big).
\end{equation}
Under the conditions of Theorem \ref{theo:main}, there exists a constant $C_2$  depending only on $\beta , F, L, N, f_{min}$ and $Q,$ such that for all $f \in H( \beta , F, L, f_{min} ), $ for all $x \in [0,K]$ and for a bandwidth of the form $h  = h_t = t^{- \alpha } $ for some $ 0 < \alpha < 1,$
\begin{equation}\label{eq:erdenx}
E_x \left[ \left( \frac{1}{Nt} \int_0^t \int_\R \tilde Q_{h,f}(y) \eta(ds,dy)  \right)^2 \right] \leq \frac{C_2}{t} h^{2 \left( 1\wedge \beta \right) -1}.
\end{equation}
\end{prop}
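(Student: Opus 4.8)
The plan is to follow the same two-stage strategy as in the proof of Proposition \ref{prop:ernum}: first work in the stationary regime under $E_\pi$, then transfer to an arbitrary starting point $x$ using the exponential ergodicity of Theorem \ref{theo:harrisok}. The crucial difference is that here the integrand is a \emph{centered} additive functional: writing $g_{h,f}(x) := \sum_{i=1}^N \tilde Q_{h,f}(x^i)$, the quantity inside the square is $\frac{1}{Nt}\int_0^t g_{h,f}(X_s)\,ds$, and by construction $\pi_1(\tilde Q_{h,f}) = 0$, so $E_\pi[g_{h,f}(X_s)] = 0$. Hence there is no "bias term" of size $O(h^{\beta})$ as one would get from an uncentered kernel average; the whole estimate comes from the fluctuations of an additive functional of a uniformly ergodic Markov process.

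The key step under the stationary regime is the classical variance bound for additive functionals: for a uniformly ergodic Markov process with $\|P_t(x,\cdot)-\pi\|_{TV}\le C\kappa^{-t}$, one has
$$
E_\pi\left[\left(\frac{1}{t}\int_0^t g(X_s)\,ds\right)^2\right] \le \frac{C'}{t}\,\|g\|_\infty \,\pi(|g|)
$$
(or a similar bound with $\|g\|_\infty\,\|g\|_{L^2(\pi)}$), obtained by expanding the square as $\frac{2}{t^2}\int_0^t\int_s^t E_\pi[g(X_s)g(X_u)]\,du\,ds$, conditioning on $\mathcal F_s$, and using $|E_\pi[g(X_u)\mid\mathcal F_s] | = |E_{X_s}[g(X_{u-s})] - \pi(g)| \le 2\|g\|_\infty C\kappa^{-(u-s)}$ which is integrable in $u-s$. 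Applying this with $g = g_{h,f}$, I need two inputs about $\tilde Q_{h,f}$: its sup-norm and its $L^1(\pi_1)$-mass. Since $Q$ has compact support of diameter $R$, $\tilde Q_{h,f}$ is supported (up to the constant subtracted) on $B_{hR}(a)\subset S_{d/2,\beta}$, on which $|f(y)-f(a)|\le (F\wedge L)\,|y-a|^{1\wedge\beta}\le C (hR)^{1\wedge\beta}$ by the H\"older/Lipschitz bounds defining $H(\beta,F,L,f_{min})$; combined with $\|Q_h(\cdot-a)\|_\infty = h^{-1}\|Q\|_\infty$ this gives $\|\tilde Q_{h,f}\|_\infty \le C h^{(1\wedge\beta)-1}$. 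For the $L^1(\pi_1)$-mass, the change of variable $x=(y-a)/h$ together with the boundedness of $\pi_1$ on $S_{d/2,\beta}$ (Theorem \ref{theo:invmeasure}) gives $\pi_1(|Q_h(\cdot-a)|\,|f(\cdot)-f(a)|) \le C h^{1\wedge\beta}$, and the subtracted constant only improves this. Multiplying $\|g_{h,f}\|_\infty \le C h^{(1\wedge\beta)-1}$ against $\pi_1(|g_{h,f}|)\le C h^{1\wedge\beta}$ and dividing by $t$ yields exactly the claimed order $\frac{1}{t}h^{2(1\wedge\beta)-1}$ in the stationary case.

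For the transfer to a general starting point $x$, I would reproduce verbatim the decomposition used at the end of the proof of Proposition \ref{prop:ernum}: split $\int_0^t$ into $\int_0^{t^p} + \int_{t^p}^t$ for some $p\in]0,1-\alpha[$, bound the $[0,t^p]$ piece crudely by $\|g_{h,f}\|_\infty^2 t^p \le C h^{2((1\wedge\beta)-1)}t^p$, note this is $o(h^{2(1\wedge\beta)-1}/t)$ once $h=t^{-\alpha}$ and $p<1-\alpha$, and control the $[t^p,t]$ piece by comparing $E_x$ and $E_\pi$ via $\|P_s(x,\cdot)-\pi\|_{TV}\le C\kappa^{-s}$, which contributes an exponentially small $\int_{t^p}^t\kappa^{-s}\,ds$. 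I expect the main obstacle — really the only non-routine point — to be carefully justifying the covariance-decay estimate for the centered additive functional uniformly in $f\in H(\beta,F,L,f_{min})$ and in $h$: one must be sure that the constant $\kappa>1$ and $C$ in \eqref{eq:ergodic} do not depend on $f$ (which Theorem \ref{theo:harrisok} guarantees) and that all the $h$-dependence is isolated cleanly into the two scalar quantities $\|\tilde Q_{h,f}\|_\infty$ and $\pi_1(|\tilde Q_{h,f}|)$, which is where the H\"older regularity of $f$ near $a$ and the boundedness of $\pi_1$ on $S_{d/2,\beta}$ enter. Everything else is bookkeeping parallel to Proposition \ref{prop:ernum}.
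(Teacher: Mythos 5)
Your proposal is correct and follows essentially the same route as the paper: expand the square of the centered additive functional, condition on $\mathcal F_s$, use the uniform exponential ergodicity of Theorem \ref{theo:harrisok} to bound the conditional expectation by $O(\kappa^{-(u-s)}\|\tilde Q_{h,f}\|_\infty)$, bound the remaining $L^1(\pi_1)$-mass via Theorem \ref{theo:invmeasure} and the change of variable, and transfer from $E_\pi$ to $E_x$ by the same $[0,t^p]/[t^p,t]$ splitting as in Proposition \ref{prop:ernum}. The only slip is the typo $(F\wedge L)$ where you mean $(F\vee L)$ in the H\"older bound for $|f(y)-f(a)|$; it has no bearing on the argument.
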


\begin{proof} 
As in the preceding proof we start by working in the stationary regime, \textit{i.e.}\ under $E_\pi .$ 
\begin{multline}\label{eq:errorstartden}
 E_{\pi} \left[ \left( \frac{1}{Nt} \int_0^t \int_\R \tilde Q_{h,f}(y) \eta(ds,dy)  \right)^2 \right]
\\
\leq \frac{2}{(Nt)^2} E_{\pi} \left[ \int_0^t  \int_\R \Big| \tilde Q_{h,f}(x) \Big| \eta(ds,dx) \Big| E_{\pi} \left( \int_s^t  \int_\R \tilde Q_{h,f}(y) \eta(du,dy) \Big| {\mathcal F}_s \right) \Big| \right].
\end{multline}
We deal with the conditional expectation using the Markov property and write
\begin{multline*}
E_{\pi} \left( \int_s^t  \int_\R \tilde Q_{h,f}(y) \eta(du,dy) \Big| {\mathcal F}_s \right)
\\
= E_{X_s} \left( \int_0^{t-s} \sum_{i=1}^N \tilde Q_{h,f}(X_u^i) du\right) = \int_0^{t-s}  \sum_{i=1}^N  E_{X_s} \left( \tilde Q_{h,f}(X_u^i) \right) du.
\end{multline*}
Now going back to the definition of $\tilde Q_{h,f},$ we can use Theorem \ref{theo:harrisok} and write
\begin{multline*}
E_{X_s} \left( \tilde Q_{h,f}(X_u^i) \right) = E_{X_s} \Big( Q_h ( X_u^i - a ) \left( f(X_u^i) - f(a) \right) \Big)  - \pi_1 \Big( Q_h ( \cdot - a ) \left( f(\cdot ) - f(a) \right) \Big) \\
\leq \frac{C}{h} (F\vee L)(Rh)^{1\wedge \beta} \parallel Q \parallel_\infty \kappa^{-u},
\end{multline*}
due to the assumption (\ref{eq:Hspace}) on the H\"older space containing $f.$ (Recall that $R$ is the diameter of the support of $Q.$) The integrability of the function $ u \to \kappa^{-u} $allows to deduce from this that
$$
\Big| E_{\pi} \left( \int_s^t  \int_\R \tilde Q_h (y-a) \eta(du,dy) \Big| {\mathcal F}_s \right) \Big| \leq \frac{N \tilde C}{h} (F\vee L)(Rh)^{1\wedge \beta} \parallel Q \parallel_\infty
$$
for some constant ${\tilde C}.$ Taking this result into account in (\ref{eq:errorstartden}), we obtain
$$
  E_{\pi} \left[ \left( \frac{1}{Nt} \int_0^t \int_\R \tilde Q_{h,f}(y) \eta(ds,dy)  \right)^2 \right] \leq \frac{2{\tilde C}}{Nht^2}  (F\vee L)(Rh)^{1\wedge \beta} \parallel Q \parallel_\infty E_{\pi} \left[\int_0^t \int_\R \Big| \tilde Q_{h,f}(x) \Big| \eta(ds,dx) \right].
$$
The end of the proof is similar to the one of Proposition \ref{prop:ernum}: the fact that we are in the invariant regime allows to use the density of the invariant measure of a single particle and its control given by Theorem \ref{theo:invmeasure}. Then we use the same change of variable $x=\frac{y-a}{h}$ to obtain
$$
E_{\pi} \left[ \left( \frac{1}{Nt} \int_0^t \int_\R \tilde Q_{h,f}(y) \eta(ds,dy)  \right)^2 \right] \leq \frac{4 {\tilde C} }{ht} (F\vee L)^2(Rh)^{2 \left( 1\wedge \beta \right) } \parallel Q \parallel_\infty  \parallel Q \parallel_{L^1} \sup_{ x \in S_{d/2 , k }} \pi_1 ( x).
$$
This result is established under the invariant regime, but we are able to extend it to any starting point $x \in [0,K]^N,$ using the same trick as the one in the proof of Proposition \ref{prop:ernum}. This finishes the proof.
\end{proof}

\subsection{Proof of Theorem \ref{theo:main}, $(i)$}

Introducing
$$ D^{t,h} = \frac{1}{Nt} \int_\R \frac1h Q\left( \frac{ y-a}{h} \right) \eta_t (dy ),$$
we have
\begin{multline*}
 D^{t,h} ( \hat f_{t,h} ( a) - f (a) ) = \frac{1}{Nt} \int_{[0, t ]}\int_\R Q_h (y-a) \mu( ds, dy) -f(a)D^{t,h} \\
 = \frac{1}{Nt} \int_{[0, t ]}\int_\R Q_h (y-a) \tilde \mu( ds, dy) + \frac{1}{Nt} \int_0^t \int_\R \frac1h Q\left( \frac{ y-a}{h} \right) \left( f(y) - f(a) \right) \eta (ds, dy ).
\end{multline*}
With the definition of $\tilde Q_{h,f}$ in (\ref{eq:defQtilde}), we have the following decomposition:
\begin{multline}\label{eq:erdecomp}
 D^{t,h} ( \hat f_{t,h} ( a) - f (a) ) \\
 = \frac{1}{Nt} \int_{[0, t ]}\int_\R Q_h (y-a) \tilde \mu(ds, dy) + \frac{1}{Nt} \int_0^t \int_\R \tilde Q_{h,f}(y)  \eta (ds, dy ) + \pi_1 \Big( Q_h ( \cdot - a ) \left( f(\cdot ) -  f(a) \right) \Big)   .
\end{multline}
The first two terms of the previous sum are  controlled respectively by Propositions \ref{prop:ernum} and \ref{prop:erden}. We deal with the third term using Proposition \ref{prop:KernApprox} as follows:

\begin{multline*}
\pi_1 \Big( Q_h ( \cdot - a ) \left( f(\cdot ) -  f(a) \right) \Big) = \int_\R  Q_h ( y - a ) \left( f(y ) -  f(a) \right) \pi_1 (y) dy \\
= \left( \int_\R  Q_h ( y - a ) \left( f(y)  \pi_1 (y) -  f(a) \pi_1 (a) \right) dy \right) + f(a) \left( \int_\R  Q_h ( y - a ) \left( \pi_1 (a) -   \pi_1 (y) \right) dy \right).
\end{multline*}

Both functions $\pi_1$ and $f \pi_1$ are H\"older of order $\beta$ (recall Theorem \ref{theo:invmeasure}) and we can apply Proposition \ref{prop:KernApprox} to each of the last two terms, using the upper bound $F$ for $f(a).$

Putting all together in (\ref{eq:erdecomp}), we have
\begin{equation}\label{eq:ersum}
\parallel D^{t,h} \left( \hat f_{t,h} ( a) - f (a)  \right) \parallel_{L^2(P^f_x)} \leq \sqrt{\frac{C_1}{ht}} + \sqrt{\frac{C_2}{ht}}h^{1\wedge \beta} + C_3 h^\beta ,
\end{equation}
with constants $C_1 , C_2$ and $C_3$ depending only on $\beta , F, L, f_{min}$ and $Q.$
As in the proof of Proposition \ref{prop:erden}, we will fix the dependence in $t$ of $h$ putting $h_t:=t^{-\alpha}$ and choosing $\alpha \in ]0,1[$ to obtain an optimal speed of convergence.

This leads to the choice $\alpha := \frac{1}{2\beta + 1}$ and $h=h_t=t^{-\frac{1}{2\beta + 1}}$ which gives us 
$$
\parallel D^{t,h_t} \left( \hat f_{t,h_t} ( a) - f (a) \right) \parallel_{L^2(P_x^f)} \leq C(\beta , F, L, f_{min}, Q) t^{-\frac{\beta}{2 \beta +1}}.
$$
To finish the proof of Theorem \ref{theo:main}, we have to work conditionally on the event $A_{t,r}$, for $ r \le r^*, $ on which we have $D^{t,h} \geq r .$ 
\begin{multline*}
E_x \left[ \left( \hat f_{t,h_t} ( a) - f (a) \right)^2 \Big| A_{t,r} \right] = \frac{1}{P_x \left( A_{t,r} \right) }  E_x \left[ \left( \hat f_{t,h_t} ( a) - f (a) \right)^2 1_{A_{t,r}} \right] \\ 
\leq \frac{1}{r^2 P_x \left( A_{t,r} \right) } \parallel D^{t,h_t} \left( \hat f_{t,h_t} ( a) - f (a) \right) \parallel_{L^2(P_x^f)}^2 \leq \frac{C(\beta , F, L,f_{min}, Q)^2 t^{-\frac{2 \beta}{2 \beta +1}}}{r^2 P_x \left( A_{t,r} \right) },
\end{multline*}
and the conclusion follows thanks to (\ref{eq:imdlb}). \hfill $\qed $

\subsection{Proof of Theorem \ref{theo:main} $(ii)$:} 

The proof relies on the martingale convergence theorem given in Corollary 3.24 of \cite{js} chapter VIII. We use the following decomposition
\begin{equation}\label{eq:decomptcl}
D^{t,h} ( \hat f_{t,h} ( a) - f (a) ) = \frac{1}{N\sqrt{th}} M^{t,h} + \frac{1}{Nt} \int_0^t \int_\R Q_h (y-a) \left( f(y)-f(a) \right) \eta(du,dy ),
\end{equation}
where
$$
M^{t,h}:= \frac{1}{\sqrt{th}} \int_{[0, t ]}\int_\R Q \left(\frac{y-a}{h} \right) \tilde \mu(ds, dy).
$$
We define for all $t \in \R_+$
$$
(M^t)_s := \frac{1}{\sqrt{th}} \int_{[0, ts]} \int_\R Q \left(\frac{y-a}{h} \right) \tilde \mu(du, dy)
$$
and show that the Assumption 3.23 of \cite{js} chapter VIII is satisfied for this sequence of processes. Therefore, we have to study, for all $\varepsilon>0$ and all $s \in \R_+,$ the limit of 
$$
\frac{1}{th} \int_0^{ts} \sum_{i=1}^N f \left( X_u^i \right) Q^2 \left( \frac{X_u^i-a}{h} \right) 1_{ \{ \frac{1}{\sqrt{th}} Q \left( \frac{X_u^i-a}{h} \right)> \varepsilon \} } du
$$

as $t$ goes to $+\infty.$ Since $Q$ is bounded and $\lim_{t \to +\infty} th_t = +\infty ,$ there exists $t_0$ such that for all $t>t_0,$ $1_{ \{ \frac{1}{\sqrt{th}} Q \left( \frac{X_u^i-a}{h} \right)> \varepsilon \} }=0.$ Consequently, the above limit is $0$ and Assumption 3.23 of \cite{js} chapter VIII is indeed satisfied.

Moreover, 
$$
\left< M^t, M^t \right>_s = \frac{1}{th} \int_0^{ts} \int_\R Q^2 \left( \frac{y-a}{h} \right) f(y) \eta(du,dy).
$$
Since our process is positive Harris recurrent, by the ergodic theorem, we have the following proposition.

\begin{prop}
$\left< M^t, M^t \right>_s$ converges in $P_x$-Probability as $t$ goes to $+\infty$ to 
$$
Nsf(a) \pi_1(a) \int Q^2(x)dx \; {\mbox a.s.}
$$

\end{prop}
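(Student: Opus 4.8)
The plan is to prove the convergence first in $L^2(P_\pi)$, i.e.\ in the stationary regime, and then transfer it to an arbitrary starting point $x$ by means of the coupling constructed in the proof of Theorem~\ref{theo:harrisok}. First I would rewrite the bracket as an occupation-time functional,
$$\langle M^t,M^t\rangle_s = \frac{1}{th}\int_0^{ts}\int_\R Q^2\Big(\tfrac{y-a}{h}\Big)f(y)\,\eta(du,dy) = \frac{1}{th}\int_0^{ts}\sum_{i=1}^N g_h(X_u^i)\,du,\qquad g_h(y):=Q^2\Big(\tfrac{y-a}{h}\Big)f(y),$$
and recall that here $h=h_t\to0$ with $th_t\to\infty$ (this is the regime already used in the verification of Assumption~3.23 of \cite{js} above). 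Since $\|g_h\|_\infty\le F\|Q\|_\infty^2$, on the coupling event $\{\tau\le ts\}$ (whose $P_x$-probability tends to $1$ because $E_{(x,y)}[\kappa^\tau]<\infty$) the bracket built from $X$ differs from the one built from the stationary copy $Y$ only by $\frac1{th}\int_0^\tau\sum_i(g_h(X_u^i)-g_h(Y_u^i))\,du$, which is bounded by $2NF\|Q\|_\infty^2\,\tau/(th)\to0$ $P_x$-a.s. Hence it will be enough to work under $P_\pi$.

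Second, I would compute the mean. Using stationarity, the exchangeability of $\pi$ under permutation of coordinates and the density $\pi_1$ from Theorem~\ref{theo:invmeasure}, together with the change of variables $x=(y-a)/h$, one gets
$$E_\pi\big[\langle M^t,M^t\rangle_s\big] = \frac{Ns}{h}\int_\R Q^2\Big(\tfrac{y-a}{h}\Big)f(y)\pi_1(y)\,dy = Ns\int_\R Q^2(x)\,f(a+xh)\,\pi_1(a+xh)\,dx.$$
Since $a\in S_{d,\beta}$, for $h$ small enough $a+xh$ stays in $S_{d/2,\beta}$ on the (compact) support of $Q$, where $f$ and $\pi_1$ are continuous and bounded uniformly over $H(\beta,F,L,f_{min})$ by Theorem~\ref{theo:invmeasure}; dominated convergence then yields $E_\pi[\langle M^t,M^t\rangle_s]\to Nsf(a)\pi_1(a)\int_\R Q^2(x)\,dx$.

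Third, I would show that the variance tends to $0$, reusing the mechanism of Proposition~\ref{prop:erden}. Writing $\bar g_h:=g_h-\pi_1(g_h)$, in the stationary regime $\langle M^t,M^t\rangle_s-E_\pi[\langle M^t,M^t\rangle_s]=\frac1{th}\int_0^{ts}\int_\R\bar g_h(y)\eta(du,dy)$ because $\eta([0,ts]\times\R)=Nts$, and the standard bound for centred additive functionals gives
$$\mathrm{Var}_\pi\big(\langle M^t,M^t\rangle_s\big) \le \frac{2C'}{(th)^2}\,ts\,N\,\pi_1(|\bar g_h|),\qquad C':=\sup_{z\in[0,K]^N}\int_0^\infty\sum_{i=1}^N|E_z[\bar g_h(X_v^i)]|\,dv.$$
The crucial point, in contrast with Proposition~\ref{prop:erden}, is that $g_h$ carries no $1/h$ prefactor, so $\|\bar g_h\|_\infty\le 2F\|Q\|_\infty^2$ and the exponential ergodicity \eqref{eq:ergodic} (together with exchangeability, which gives $E_\pi[\bar g_h(X_v^i)]=0$) bounds $C'$ by a constant independent of $h$ and $s$; on the other hand $\pi_1(|\bar g_h|)\le 2\pi_1(g_h)=2h\int_\R Q^2(x)f(a+xh)\pi_1(a+xh)\,dx=O(h)$. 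Hence $\mathrm{Var}_\pi(\langle M^t,M^t\rangle_s)=O(1/(th_t))\to0$, which with the mean computation gives $L^2(P_\pi)$-convergence, hence convergence in $P_\pi$-probability, and finally, by the first paragraph, convergence in $P_x$-probability to $Nsf(a)\pi_1(a)\int_\R Q^2(x)\,dx$.

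The main obstacle is a mild one: the bandwidth $h=h_t$ moves with $t$, so the ergodic theorem in its plain a.s.\ form (for a \emph{fixed} observable) does not apply directly — one has to track the shrinking target $\pi_1(g_{h_t})$, which is why the $L^2$ route above is needed, and one must check that the variance bound genuinely beats the $O(h)$ mass carried by $g_{h_t}$ (it does, thanks to the extra factor $1/(th_t)\to0$). Everything else is a rerun of the arguments in Propositions~\ref{prop:ernum}--\ref{prop:erden}.
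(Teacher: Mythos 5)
Your argument is correct and follows essentially the same route as the paper's (very terse) proof: show that the bracket minus its stationary mean $\frac{Ns}{h}\pi_1(g_h)$ is small in $L^2$ via exponential ergodicity, then let the stationary mean converge to $Nsf(a)\pi_1(a)\int Q^2$ by continuity of $f$ and $\pi_1$ on $S_{d,\beta}$. You fill in the variance bound and the coupling-based transfer from $P_\pi$ to $P_x$ that the paper only asserts, but the mechanism is identical, so this is a more detailed write-up of the same proof rather than a different approach.
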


\begin{proof}
Since our process is positive Harris recurrent, $f$ being continuous and $Q$ with compact support, we have
$$
\lim_{t \to + \infty} E_x \left[ \left( \frac{1}{th} \int_0^{ts} \int_\R Q^2 \left( \frac{y-a}{h} \right) f(y) \eta(du,dy) - \frac{N}{th} \int_0^{ts} \int_\R Q^2 \left( \frac{y-a}{h} \right) f(y) \pi_1(y)dy \right)^2 \right] =0.
$$
Then the result is obtained by continuity of $\pi_1$ and $f$ on $S_{d,k}.$
\end{proof}

Consequently, Corollary 3.24 of \cite{js} chapter VIII with $s=1$ gives us the weak convergence of $M^{t,h}$ to 
${\mathcal N} \left( 0, Nf(a) \pi_1(a) \int Q^2(x)dx \right).$

We deal with the second term of (\ref{eq:decomptcl}) as in the previous subsection and obtain 
$$
\left\Vert \frac{1}{Nt} \int_0^t \int_0^K Q_h (y-a) \left( f(y)-f(a) \right) \eta(du,dy ) \right\Vert_{L^2(P^f_x)} \leq \sqrt{\frac{C_2}{ht}}h^{1\wedge \beta} + C_3 h^\beta .
$$
Therefore, when $t$ goes to $+ \infty,$ (\ref{eq:decomptcl}) gives us the following weak convergence:
$$
\sqrt{th_t}D^{t,h_t} ( \hat f_{t,h_t} ( a) - f (a) ) \longrightarrow {\mathcal N} \left( 0, \frac{ f(a) \pi_1(a)}{N} \int Q^2(x)dx \right),
$$
since $ h_t = o ( t^{ - 1 /(1 + 2 \beta ) } ).$

Finally, we deal with the additive functional $D^{t,h_t}$ using the ergodic theorem.
Recall that 
$$ D^{t,h} = \frac{1}{Nt} \int_0^t \int_\R \frac1h Q\left( \frac{ y-a}{h} \right) \eta(ds,dy ).$$
Thanks to \eqref{eq:imlb}, $\pi_1(a) >0,$ and the ergodic theorem gives us the almost sure convergence to $\pi_1(a)$ (since $\int Q(x)dx=1$), which allows us to conclude.
\hfill $\qed $

\section{Proof of Theorem \ref{theo:lowerbound}}\label{sec:optimal}
The proof of Theorem \ref{theo:lowerbound} follows closely the proof of Theorem 8 of Hoffmann and Olivier (2015) \cite{hoffmann-olivier}, going back to similar ideas developed in \cite{hhl}.
Let $ h_t = t^{ - \frac{1}{2 \beta + 1 } }$ and fix any test rate function $f_0 \in H( \beta, F - \delta , L- \delta, f_{min} ) ,$ for some fixed $ \delta  \in ] 0, F \wedge L[ .$ Then, as in \cite{hoffmann-olivier}, we define a perturbation $f_t $ of $f_0$ by 
$$ f_t ( x ) = f_0 ( x) + b h_t^{\beta +1} \chi_{h_t} ( x - a ) , $$
where $ b > 0 $ is a positive constant, $\chi \in C_c ( \R_+, \R_+) $ is a positive kernel function of compact support included in $[- 1, 1 ] $ such that $ \chi ( 0 ) = 1 , $ $ \chi ( x) \le 1 $ for all $x$ and
\begin{equation}\label{eq:chiht}
\chi_{h_t} ( x) = \frac{1}{h_t} \chi ( \frac{x}{h_t} ) .
\end{equation}

 Notice that the first $l$ derivatives of $ \chi_{h_t} $ are of order $ h_t^{ - (l+1)} ,$ therefore the factor $ h_t^{\beta +1}$ implies that $f_t \in H( \beta , F, L, f_{min} ) ,$ if we choose $b  $ sufficiently small. An important point in the above choice of $f_t$ is that 
\begin{equation}\label{eq:auchgut}
 f_t ( a) - f_0 ( a) = b h_t^\beta = b t^{ - \frac{\beta }{2 \beta +1}} ,
\end{equation} since $\chi ( 0 ) = 1.$

In the following, we shall write shortly $ \P_0 := ( P_x^{f_0})_{| {\mathcal F_t}} $ and $ \P_t := (P_x^{f_t})_{| {\mathcal F_t}} $ for the associated probability measures in restriction to $ {\mathcal F}_t.$ The following lower bound is by now classical. For any fixed constant $C > 0,$ using Markov's inequality and denoting by $L_t^{f_t/f_0} =\frac{ d \P_0}{d \P_t }  $ the likelihood ratio of $ \P_0 $ with respect to $\P_t, $ on ${\mathcal F}_t,$    
\begin{eqnarray*}
&&\sup_{ f \in H( \beta , F, L, f_{min} ) } t^{ \frac{2\beta}{1 + 2 \beta } }  E_x^f [ | \hat f_t ( a) - f(a) |^2 ]  \\
&& \geq t^{ \frac{2\beta}{1 + 2 \beta } }  \left[ \frac12 \E_0 [ | \hat f_t ( a) - f_0 ( a) |^2 ]  + \frac12 \E_t [ | \hat f_t ( a) - f_t ( a) |^2 ] \right] \\
&&\geq \frac{C^2 }{2} \left[ \P_0 \left( t^{ \frac{\beta}{1 + 2 \beta } }| \hat f_t ( a) - f_0 ( a) | \geq C \right) +  \P_t \left( t^{ \frac{\beta}{1 + 2 \beta } }| \hat f_t ( a) - f_t ( a) | \geq C \right) \right] \\
&& = \frac{C^2 }{2} \left[ \P_0 \left( t^{ \frac{\beta}{1 + 2 \beta } }| \hat f_t ( a) - f_0 ( a) | \geq C \right) +  \E_0 \left( L_t^{f_t/f_0} 1_{\{ t^{ \frac{\beta}{1 + 2 \beta } }| \hat f_t ( a) - f_t ( a) | \geq C  \}}\right) \right] . 
\end{eqnarray*} 
Now, 
$$  t^{ \frac{\beta}{1 + 2 \beta } }[ | \hat f_t ( a) - f_0 ( a) |  + | \hat f_t ( a) - f_t ( a) |  ] \geq  t^{ \frac{\beta}{1 + 2 \beta } } | f_0(a)  - f_t ( a) | \geq b ,$$
which is due to \eqref{eq:auchgut}. As a consequence, if we choose
$
C = b /2 ,
$ then
$$ 1_{\{ t^{ \frac{\beta}{1 + 2 \beta } }| \hat f_t ( a) - f_0 ( a) | \geq C  \}}  + 1_{\{ t^{ \frac{\beta}{1 + 2 \beta } }| \hat f_t ( a) - f_t ( a) | \geq C  \}} \geq 1 ,$$
in particular, 
$$ 1_{\{ t^{ \frac{\beta}{1 + 2 \beta } }| \hat f_t ( a) - f_t ( a) | \geq C  \}}  \geq 1_{\{ t^{ \frac{\beta}{1 + 2 \beta } }| \hat f_t ( a) - f_0 ( a) | < C  \}}  .$$
We conclude that 
\begin{eqnarray*}
&&\sup_{ f \in H( \beta , F, L, f_{min} ) } t^{ \frac{2\beta}{1 + 2 \beta } }  E_x^f [ | \hat f_t ( a) - f(a) |^2 ]\\
&& \geq \frac{ b^2}{8} 
\E_0 \left[ 1_{\{ t^{ \frac{\beta}{1 + 2 \beta } }| \hat f_t ( a) - f_0 ( a) | \geq  \frac{b}{2}  \}}  + L_t^{ f_t/f_0} 1_{\{ t^{ \frac{\beta}{1 + 2 \beta } }| \hat f_t ( a) - f_0 ( a) | <  \frac{b}{2}  \}} \right] 
\\
&&\geq  \frac{ b^2}{8}  e^{- s} \P_0 ( L_t^{f_t/ f_0 } \geq e^{-s} ) ,
\end{eqnarray*}
for any $s > 0 .$ 
Therefore, in order to achieve the proof of Theorem \ref{theo:lowerbound}, it suffices to show that 
\begin{equation}\label{eq:sufficientcondlr}
 \lim \sup_{t \to \infty } \E_0 [ | \log L_t^{f_t/f_0 } | ] < \infty .
\end{equation} 

Indeed, we can deduce from \eqref{eq:sufficientcondlr} the following statements:
\begin{gather*}
\exists M, \forall t, \E_0 \left( | \log L_t^{f_t/f_0 } | \right) \leq M, \\
\exists M, \forall t, \P_0 \left(  \log L_t^{f_t/f_0 } < -2M \right) \leq \frac{1}{2}, \\
\exists s, \forall t, \P_0 \left(  \log L_t^{f_t/f_0 } \geq -s \right) \geq \frac{1}{2}.
\end{gather*}

Recall that by construction, $f_t \geq f_0.$ Moreover, since the support of $ \chi $ is included in $ [-1 , 1],$ $ f_t (y) \neq f_0 (y) $ implies $ y \in J_t := [ a - h_t, a+h_t ].$ Now, Theorem 3.5 of L\"ocherbach (2002) \cite{evaold}, applied to the particular case without branching, shows that $\P_0$ and $\P_t $ are equivalent on $ {\mathcal F}_t, $ with density 
\begin{equation}\label{eq:llr}
 \log L_t^{f_t/f_0} = \int_0^t \int_{J_t} \log ( \frac{f_t}{f_0} (y)) \mu (ds, dy ) - \int_{J_t} \left ( \frac{f_t}{f_0 } - 1 \right)(y)  f_0 (y) \eta_t (dy ) .
\end{equation}
We now proceed exactly as in \cite{hhl}, proof of Lemma 11. The $\P_0-$ martingale part within \eqref{eq:llr} is given by 
$$ \int_0^t \int_{J_t} \left ( \frac{f_t}{f_0 } - 1 \right) ( \mu -  \hat \mu^{f_0} ) (ds, dy ) ,$$
where $ \hat \mu^{f_0} (ds , dy ) = \sum_{i=1}^N f_0 ( X^i_s ) \delta_{X^i_s} ( dy ) ds $ is the $ \P_0-$compensator of $ \mu .$  Its angle bracket is 
\begin{multline*}
 b^2 h_t^{ 2 \beta +2} \int_{J_t} \left( \frac{ \chi^2_{h_t} ( y - a ) }{f_0 ( y ) } \right) \eta_t (dy )  \le \frac{ b^2 }{\inf_{y \in J_t} ( f_0 (y))  } t^{ - \frac{ 2 \beta}{ 2 \beta + 1 } } h_t^2 \int_{J_t} \chi^2_{h_t} ( y - a ) \eta_t (dy ) \\ 
 \le  \frac{ b^2 }{\inf_{y \in J_t} ( f_0 (y))  } t^{ - \frac{ 2 \beta}{ 2 \beta + 1 } } \eta_t ( J_t) =  \frac{ b^2 }{\inf_{y \in J_t} ( f_0 (y))  } t^{  \frac{ 1}{ 2 \beta + 1 } } \frac{1}{t} \eta_t ( J_t),
\end{multline*}
since $ \chi ( \cdot ) \le 1 ,$ by definition of $\chi_{h_t}$ (recall \eqref{eq:chiht}). All other terms in \eqref{eq:llr} are treated exactly as in \cite{hhl}. Therefore, it only remains to show that 
\begin{equation}\label{eq:finalouf}
\lim \sup_{ t \to \infty } \E_0 \left( \frac{1}{t h_t} \eta_t (J_t) \right) < \infty .  
\end{equation}
We apply once more Theorem \ref{theo:harrisok} and rewrite
\begin{multline*}
\E_0 ( \eta_t ( J_t) ) = \int_0^t E_x^{f_0} ( \bar 1_{J_t} (X_s) ) ds = 
\int_0^t E_x^{f_0} \left( \bar 1_{J_t} (X_s) - \pi^{f_0} ( \bar 1_{J_t} ) \right)  ) ds  + t \pi^{f_0} ( \bar 1_{J_t} ) 
\\
\le  C N \int_0^t \kappa^{ - s} ds + t \pi^{f_0} ( \bar 1_{J_t} )  
\le CN \frac{1}{\ln \kappa} + t \pi^{f_0} ( \bar 1_{J_t} ) \\
=  CN \frac{1}{\ln \kappa} + N t \int_{J_t} \pi_1^{f_0} (y ) dy ,
\end{multline*}
where $\bar 1_{J_t} (x):= \sum_{i=1}^N 1_{J_t}(x^i)$ for $x \in \R^N,$ and $\pi_1^{f_0} (y ) $ denotes the Lebesgue density of $ \pi_1^{f_0}, $ which exists on $J_t$ by choice of $a, $ for $t$ sufficiently large. Using the change of variables $ z = (y- a)/ h_t ,$ we obtain 
$$ \E_0 ( \eta_t ( J_t) ) \le CN \frac{1}{\ln \kappa} + N t h_t \int_{-1}^1 \pi_1^{f_0} ( a + h_t z) dz  \le CN \frac{1}{\ln \kappa} + 2 N t h_t \sup_{ x \in B_{ h_t ( a) } } \pi_1^{f_0} ( x) ,$$
which implies finally \eqref{eq:finalouf} by Theorem \ref{theo:invmeasure}. \hfill $\qed$

\section*{Acknowledgments}
We thank an anonymous referee for helpful comments and suggestions. This research has been conducted as part of the project Labex MME-DII (ANR11-LBX-0023-01),  as part of the Agence Nationale de la Recherche
PIECE 12-JS01-0006-01 and as part of the activities of FAPESP Research,
Dissemination and Innovation Center for Neuromathematics (grant
2013/07699-0, S.\ Paulo Research Foundation).

\end{document}